\documentclass[11pt,leqno]{article}
\usepackage{graphicx}
\usepackage{amsfonts, amsthm}
\usepackage{amsxtra}
\usepackage[latin1]{inputenc}
\usepackage{fontenc}
\usepackage[dvips]{epsfig}
\usepackage[usenames]{color}
\textheight 23truecm \textwidth 15truecm
\addtolength{\oddsidemargin}{-1.05truecm}
\addtolength{\topmargin}{-2truecm}
\begin{document}

\newtheorem{theo}{Theorem}[section]
\newtheorem{defi}[theo]{Definition}
\newtheorem{lemm}[theo]{Lemma}
\newtheorem{prop}[theo]{Proposition}
\newtheorem{rem}[theo]{Remark}
\newtheorem{exam}[theo]{Example}
\newtheorem{cor}[theo]{Corollary}
\newtheorem{state}[theo]{Statement}

\begin{center}
{\LARGE On the renormalizations of circle homeomorphisms with several break points}\footnote{MSC2000: 37C05; 37C15; 37E05; 37E10; 37E20; 37B10.
Keywords and phrases: Interval exchange map, Rauzy-Veech induction,
Renormalization, Dynamical partition, Martingale, Homeomorphism on
the circle, Approximation} \\
\vspace{.25in}
\large{Abdumajid Begmatov\footnote{Institute of Mathematics, Academy of Science of the Republic of Uzbekistan, Do'rmon yo'li street 29, Akademgorodok, 100125 Tashkent, Uzbekistan. E-mail: abdumajidb@gmail.com}, Kleyber Cunha\footnote{Departamento de Matem\'{a}tica, Universidade Federal da Bahia, Av. Ademar de Barros s/n, CEP 40170-110, Salvador, BA, Brazil.  E-mail: kleyber@ufba.br} and
Akhtam Dzhalilov\footnote{Department of Natural and Mathematical Sciences, Turin Polytechnic University in Tashkent,
Niyazov Str. 17, 100095 Tashkent, Uzbekistan.  E-mail: a\_dzhalilov@yahoo.com}}

\end{center}

\begin{abstract}
Let $f$ be an orientation preserving homeomorphism on the circle with several break points, that is, its derivative $Df$ has jump discontinuities at these points. We study Rauzy-Veech renormalizations of piecewise smooth circle homeomorphisms by considering such maps as generalized interval exchange maps of genus one. Suppose that $Df$ is absolutely continuous on each interval of continuity and $D\ln{Df}\in \mathbb{L}_{p}$ for some $p>1$.  We prove that, under certain combinatorial assumptions on $f$, renormalizations $R^{n}(f)$ are approximated by piecewise M\"{o}bius functions in $C^{1+L_{1}}$-norm, that means, $R^{n}(f)$ are approximated in $C^{1}$-norm and $D^{2}R^{n}(f)$ are approximated in $L_{1}$-norm. In particular, if the product of the sizes of breaks of $f$ is trivial, then the renormalizations are approximated by piecewise affine interval exchange maps.
\end{abstract}

\sloppy

\section{Introduction}

One of the most studied classes of dynamical systems are orientation-preserving homeomorphisms of the circle $\mathbb{S}^{1}=\mathbb{R}/\mathbb{Z}$. Poincar\'{e} (1885) noticed that the orbit structure of an orientation-preserving diffeomorphism $f$ is determined by some irrational mod 1, the \emph{ rotation number} $\rho=\rho(f)$ of $f$, in the following sense: for any $x\in \mathbb{S}^1,$ the mapping $f^{j}(x)\rightarrow j\rho \mod 1,$ $j\in \mathbb{Z},$ is orientation-preserving. Denjoy proved, that if $f$ is an orientation-preserving $C^{1}$-diffeomorphism of the circle with  irrational rotation number $\rho$ and $\log f'$ has bounded variation then the orbit $\{f^{j}(x)\}_{j\in \mathbb{Z}}$  is dense and the mapping $f^{j}(x)\rightarrow j\rho \mod 1$  can therefore be extended by continuity to a homeomorphism $h$ of $\mathbb{S}^1,$ which conjugates $f$ to the linear rotation $f_{\rho}:x\rightarrow x+\rho \mod 1$. In this context it is a natural question to ask, under what conditions the conjugation is smooth. The first local results, that is the results requiring the closeness of diffeomorphism to the linear rotation, were obtained by Arnold \cite{Ar1961} and Moser \cite{Mo1966}. Next Herman \cite{He1979} obtained a first global result (i.e. not requiring the closeness of diffeomorphism to the linear rotation) asserting regularity of conjugation of the circle diffeomorphism. His result was developed by Yoccoz \cite{Yo1984}, Stark \cite{St1988}, Khanin \& Sinai \cite{KS1987, KS1989}, Katznelson \& Ornstein \cite{KO1989.1}, Khanin \& Teplinsky \cite{KT2009}. They have shown, that if $f$ is $C^3$ or $C^{2+\nu}$ and $\rho$ satisfies certain Diophantine condition, then the conjugation will be at least $C^1$. Notice that the renormalization approach used in \cite{KS1989} and \cite{St1988} is more natural in the spirit of Herman's theory. In this approach regularity of the conjugation can be obtained by using the convergence of renormalizations of sufficiently smooth circle diffeomorphisms. In fact, the renormalizations of a smooth circle diffeomorphism converge exponentially fast to a family of linear maps with slope 1. Such a convergence together with the condition on the rotation number (of Diophantine type) imply the regularity of conjugation.

The bottom of the scale of smoothness for a circle diffeomorphism $f$ was first considered by Herman in \cite{He1981}. He proved that if $Df$ is absolutely continuous, $D\log Df\in \mathbb{L}_{p}$ for some $p>1$, the rotation number $\rho=\rho(f)$ is irrational of bounded type (meaning that the entries in the continued fraction expansion of $\rho$ is bounded), and $f$ is close to the linear rotation $f_{\rho}$, then the conjugating map $h$ (between $f_{\rho}$ and $f$) is absolutely continuous. Later, using a martingale approach and not requiring the closeness of $f$ to the linear rotation, Katznelson and Ornstein \cite{KO1989.2} gave a different proof of Herman's theorem on absolute continuity of conjugacy. The latter condition on smoothness for $f$ (that is, $Df$ is absolutely continuous and $D\log Df\in \mathbb{L}_{p}$, $p>1$) will be called the Katznelson and Ornstein's (KO, for short) smoothness condition.

A natural generalization of diffeomorphisms of the circle are homeomorphisms with break points, i.e., those circle diffeomorphisms which are smooth everywhere with the exception of finitely many points at which their derivatives have jump discontinuities. Circle homeomorphisms with breaks were investigated by Herman \cite{He1979} in the piecewise-linear (PL) case. The studies of more general (non PL) circle diffeomorphisms with a unique break point started with the work of Khanin \& Vul \cite{KV1991}. It turns out that, the renormalizations of circle homeomorphisms with break points are rather different from those of smooth diffeomorphisms. Indeed, the renormalizations of such a  circle diffeomorphism converge exponentially fast to a two-parameter family of M\"{o}bius transformations. Applications of their result are very wide in many branches of one dimensional  dynamics, examples are the investigation of the invariant measures, nontrivial scalings and prevalence of periodic trajectories in one parameter families. In particular they investigated also the renormalization in the case of rational rotation number. Using convexity of the renormalization analysed positions of periodic trajectories of one parameter family of circle maps and they proved that the rotation number is rational for almost all parameter values. Moreover, the investigation of the  M\"{o}bius transformations in \cite{KhKhm2003},
\cite{KT2013} and \cite{KhYam} showed, that the renormalization operator in that space possesses hyperbolic properties analogous to those predicted by Lanford \cite{Lan1988} in the case of critical rotations. The result of Khanin and Vul is also at the core of the so-called \emph{rigidity problem}, which concerns the smoothness of conjugacy between two dynamical systems, which a priori are only topologically
equivalent. The rigidity problem for circle homeomorphisms with a break point has recently been completely solved in \cite{KhK2013}, \cite{KhK2014}, \cite{KhK2016}, \cite{KT2013}.

The next problem concerning the rigidity problem is to study the regularity properties of the conjugacy for circle maps with several break points. Circle maps with several break points can be considered as generalized interval exchange transformations of genus one. Marmi, Moussa and Yoccoz introduced in \cite{MMY2012} generalized interval exchange transformations, obtained by replacing the affine restrictions of generalized interval exchange transformations in each subinterval with smooth diffeomorphisms. They showed, that sufficiently smooth generalized interval exchange transformations of a certain combinatorial type, which are deformations of standard interval exchange transformations and tangent to them at the points of discontinuities, are smoothly linearizable.

Recently Cunha and Smania studied in \cite{CS2013} and \cite{CS2014} the Rauzy-Veech renormalizations of piecewise $C^{2+\nu}$- smooth circle homeomorphisms with several break points by considering such maps as generalized interval exchange transformations of genus one. They proved that Rauzy-Veech renormalizations of $C^{2+\nu}$- smooth generalized interval exchange maps satisfying a certain combinatorial condition are approximated by piecewise M\"{o}bius transformations in $C^2$- norm. Using convergence of renormalizations of two generalized interval exchange maps with the same bounded-type combinatorics and zero mean nonlinearities they proved in \cite{CS2014} that these maps $C^{1}$-smoothly conjugate to each other.

The purpose of the present work is to study the behavior of Rauzy-Veech renormalizations of generalized interval exchange maps of genus one and of low smoothness. We prove, that Rauzy-Veech renormalizations $R^{n}(f)$ of piecewise KO-smooth generalized interval exchange maps of genus one and satisfying certain combinatorial assumptions, are approximated by piecewise M\"{o}bius functions in $C^{1+L_{1}}$-norm, that means, the $R^{n}(f)$ are approximated in $C^{1}$-norm and the $D^{2}R^{n}(f)$ are approximated in $L_{1}$-norm. In particular, if  $f$ has zero mean nonlinearity, then the renormalizations are approximated by piecewise affine interval exchange maps.

Our main tool in this paper is an argument from real analysis which is used for $C^{2+\nu}$- smooth circle maps in \cite{KS1989}, \cite{KV1991} and for the KO-smooth case in \cite{BDM2014}. Note also that our proofs are based on considerations from the theory martingales, which for circle dynamics have been used by Katznelson and Onstein in \cite{KO1989.2}.

\section{Rauzy-Veech renormalization}

To describe the combinatorial assumptions of our results, we will introduce the Rauzy-Veech renormalization scheme. Let $I$ be an open bounded interval and $\mathcal{A}$ be an alphabet with $d\geq 2$ symbols. Consider the partition of $I$ into $d$ subintervals indexed by $\mathcal{A}$, that is, $\mathcal{P}=\{I_{\alpha},\,\, \alpha\in \mathcal{A}\}$. Let $f: I\rightarrow I$ be a bijection. We say that the triple $(f, \mathcal{A}, \mathcal{P})$ is a \textbf{generalized interval exchange map} with $d$ intervals (for short g.i.e.m.), if $f|_{I_\alpha}$ is an orientation-preserving homeomorphism for all $\alpha\in\mathcal{A}.$ Here and later, all intervals will be bounded, closed on the left and open on the right.

If $f|_{I_\alpha}$ is a translation, then $f$ is called a \textbf{standard interval exchange map} (for short s.i.e.m.). When $d=2$, identifying the endpoints of $I$, standard i.e.m.'s correspond to linear rotations of the circle and generalized i.e.m.'s to homeomorphisms of the circle with two break points.

Now we formulate some conditions on the combinatorics for g.i.e.m and define the renormalization scheme. Note that the combinatorial conditions and the renormalization scheme are the same for generalized and standard i.e.m. cases.

The order of the subintervals $I_{\alpha}$ before and after the map, constitutes the combinatorial data for $f$, which will be explicitly defined as follows.

Given two intervals $J$ and $U$, we will write $J <U$, if their interiors are disjoint and $x<y$, for every $x\in J$ and $y\in U$. This defines a partial order in the set of all intervals.

Let $f: I\rightarrow I$ be a g.i.e.m. with alphabet $\mathcal{A}$ and $\pi_0,\, \pi_1:\mathcal{A}\rightarrow \{1, . . . , d\}$, be bijections such that
$$
\pi_0(\alpha)<\pi_0(\beta),\,\,\,\,\,\, \mbox{\rm iff} \,\,\,\,\,\,\,\,I_{\alpha}<I_{\beta},
$$
and
$$
\pi_1(\alpha)<\pi_1(\beta),\,\,\,\,\,\, \mbox{\rm iff} \,\,\,\,\,\,\,\,f(I_{\alpha})<f(I_{\beta}).
$$

We call pair $\pi=(\pi_0, \pi_1)$ the \textbf{combinatorial data} associated to the g.i.e.m. $f$. We call $p=\pi_1^{-1}\circ\pi_0: \{1,...,d\}\rightarrow \{1,...,d\}$ the \textbf{monodromy invariant} of the pair $\pi=(\pi_0, \pi_1)$. When appropriate we will also use the notation $\pi = (\pi(1),\, \pi(2),\,...,\pi(d))$ for the combinatorial data of $f$. We always assume that the pair $\pi=(\pi_0, \pi_1)$ is \textbf{irreducible}, that is, for all $j\in \{1,...,d-1\}$ we have: $\pi_0^{-1}({1, . . . , j})\neq \pi_1^{-1}({1, . . . , j})$.

Let $\pi=(\pi_0, \pi_1)$ be the combinatorial data associated to the g.i.e.m $f$. For each $\varepsilon\in \{0, 1\}$, denote by $\alpha(\varepsilon)$ the last symbol in the expression of $\pi_{\varepsilon}$, that is \, $\alpha(\varepsilon)=\pi^{-1}_{\varepsilon}(d)$.

Let us assume that the intervals $I_{\alpha(0)}$ and $f(I_{\alpha(1)})$ have different lengths. Then the g.i.e.m. $f$ is called \textbf{Rauzy-Veech renormalizable}(renormalizable, for short). If $|I_{\alpha(0)}|>|f(I_{\alpha(1)})|$ we say that $f$ is renormalizable of \textbf{type} $\textbf{0}$. When $|I_{\alpha(0)}|<|f(I_{\alpha(1)})|$ we say that $f$ is renormalizable of \textbf{type} $\textbf{1}$.  In either case, the letter corresponding to the largest of these intervals is called \textbf{winner} and the one corresponding to the shortest is called the \textbf{loser} of $\pi$. Let $I^{(1)}$ be the subinterval of $I$ obtained by removing the loser, that is, the shortest of these two intervals:
$$
I^{(1)}=\left\{\begin{array}{ll}
I\setminus f(I_{\alpha(1)}),\,\,\, \mbox{\rm if} \,\,\,\,\, \mbox{\rm type 0}, \\
I\setminus I_{\alpha(0)},\,\,\, \mbox{\rm if} \,\,\,\,\, \mbox{\rm
type 1}.
\end{array}\right.
$$

Since the loser is the last subinterval on the right of $I$, the intervals $I$ and $I^{(1)}$ have the same left endpoint.

The \textbf{Rauzy-Veech induction} of $f$ is the first return map $R(f)$ to the subinterval $I^{(1)}$. We want to see $R(f)$ is again g.i.e.m. with the same alphabet $\mathcal{A}$. For this we need to associate to this map an $\mathcal{A}$ - indexed partition of its domain. Denote by $I^{(1)}_{\alpha}$ the subintervals of $I^{(1)}$. Let $f$ be renormalizable of type $0$. Then the domain of $R(f)$ is the interval $I^{(1)}=I\setminus f(I_{\alpha(1)})$ and
we have
\begin{equation}\label{Itype0}
I^{(1)}_{\alpha}=\left\{\begin{array}{ll}
I_{\alpha},\,\,\,\,\,\,\, \mbox{\rm for} \,\,\,\,\, \alpha\neq\alpha(0), \\
I_{\alpha(0)}\setminus f(I_{\alpha(1)}),\,\,\, \mbox{\rm for}
\,\,\,\,\, \alpha=\alpha(0).
\end{array}\right.
\end{equation}
These intervals form a partition of the interval $I^{(1)}$ and denoted by $\mathcal{P}^{(1)}=
\{I^{(1)}_{\alpha},\, \alpha\in \mathcal{A}\}$. Since $f(I_{\alpha(1)})$ is the last interval on the right of
$f(\mathcal{P})$, we have $f(I^{(1)}_{\alpha})\subset I^{(1)}$ for every $\alpha\neq \alpha(1)$. This means that, $R(f):=f$ restricted to these $I^{(1)}_{\alpha}$. On the other hand, due to $I^{(1)}_{\alpha(1)}=I_{\alpha(1)}$, we have
$$
f\left(I^{(1)}_{\alpha(1)}\right)=f\left(I_{\alpha(1)}\right)\subset I_{\alpha(0)},\,\,\,\, \mbox{\rm and so} \,\,\,\,
f^2\left(I^{(1)}_{\alpha(1)}\right)\subset f\left(I_{\alpha(0)}\right)\subset I^{(1)}.
$$
Then $R(f):=f^2$ restricted to $I^{(1)}_{\alpha(1)}$. Thus,
\begin{equation}\label{Rtype0}
R(f)(x)=\left\{\begin{array}{ll}
f(x),\,\,\,\,\, \mbox{\rm if} \,\,\,\,\, x\in I^{(1)}_{\alpha}\,\,\,\text{ and } \alpha\neq\alpha(1), \\
f^2(x),\,\,\,\, \mbox{\rm if} \,\,\,\,\, x\in I^{(1)}_{\alpha(1)}.
\end{array}\right.
\end{equation}

If $f$ is renormalizable of type $1$, the domain of $R(f)$ is the interval \ $I^{(1)}=I\setminus I_{\alpha(0)}$ and we have
\begin{equation}\label{Itype1}
I^{(1)}_{\alpha}=\left\{\begin{array}{lll}
I_{\alpha},\,\,\,\,\,\,\, \mbox{\rm for} \,\,\,\,\, \alpha\neq\alpha(0), \alpha(1), \\
f^{-1}(I_{\alpha(0)}),\,\,\, \mbox{\rm for} \,\,\,\,\,
\alpha=\alpha(0), \\
I_{\alpha(1)}\setminus f^{-1}(I_{\alpha(0)}),\,\,\, \mbox{\rm for}
\,\,\,\,\, \alpha=\alpha(1).
\end{array}\right.
\end{equation}
Then $f\left(I^{(1)}_{\alpha}\right)\subset I^{(1)}$ for every $\alpha\neq \alpha(0)$, and so $R(f)=f$ restricted to these $I^{(1)}_{\alpha}$. On the other hand,
$$
f^2\left(I^{(1)}_{\alpha(0)}\right)= f(I_{\alpha(0)})\subset I^{(1)},
$$
and, so $R(f)=f^2$ restricted to $I^{(1)}_{\alpha(0)}$. Thus,
\begin{equation}\label{Rtype1}
R(f)(x)=\left\{\begin{array}{ll}
f(x),\,\,\,\,\, \mbox{\rm if} \,\,\,\,\, x\in I^{(1)}_{\alpha}\,\,\,\text{ and } \alpha\neq\alpha(0), \\
f^2(x),\,\,\,\, \mbox{\rm if} \,\,\,\,\, x\in I^{(1)}_{\alpha(0)}.
\end{array}\right.
\end{equation}
It is easy to see, that $R(f)$ is a bijection on $I^{(1)}$ and an orientation-preserving homeomorphisms on each $I^{(1)}_{\alpha}$. Moreover, the alphabet $\mathcal{A}$  for $f$ and $R(f)$ remains the same.

The triple $(R(f), \mathcal{A}, \mathcal{P}^1)$ is called the \textbf{Rauzy-Veech renormalization} of $f$. If $f$ is
renormalizable of type $\varepsilon\in \{0, 1\}$, then the combinatorial data $\pi^1=(\pi_0^1, \pi_1^1)$ of $R(f)$ are given by
$$
\pi_{\varepsilon}^1:=\pi_{\varepsilon},\,\,\,\,\,\, \mbox{\rm
and}\,\,\,\,\,\,\,
\pi_{1-\varepsilon}^1(\alpha)=\left\{\begin{array}{lll}
\pi_{1-\varepsilon}(\alpha),\,\,\,\,\, \mbox{\rm if} \,\,\,\,\, \pi_{1-\varepsilon}(\alpha)\leq \pi_{1-\varepsilon}(\alpha(\varepsilon)), \\
\pi_{1-\varepsilon}(\alpha)+1,\,\,\,\,\, \mbox{\rm if} \,\,\,\,\, \pi_{1-\varepsilon}(\alpha(\varepsilon))< \pi_{1-\varepsilon}(\alpha)<d, \\
\pi_{1-\varepsilon}(\alpha(\varepsilon))+1,\,\,\,\,\, \mbox{\rm if}
\,\,\,\,\, \pi_{1-\varepsilon}(\alpha)=d.
\end{array}\right.
$$

We say that a g.i.e.m. $f$ is \textbf{infinitely renormalizable}, if $R^{n}(f)$ is well defined for every $n\in \mathbb{N}$. Let $I^{(n)}$ be the domain of $R^{n}(f)$. It is clear that, $R^{n}(f)$ is the first return map for $f$ to the interval $I^{(n)}$. Similarly, $R^{n}(f)^{-1}=R^{n}(f^{-1})$ is the first return map for $f$ to the
interval $I^{(n)}$.

For every interval of the form $J=[a, b)$ we put $\partial J:=\{a\}$.
\begin{defi}
We say that g.i.e.m. $f$  \textbf{has no connection}, if
\begin{equation}\label{Keane}
f^m(\partial I_{\alpha})\neq \partial I_{\beta},\,\,\,\,\,\,\,
\mbox{\it for all} \,\,\,\,\,\, m\geq 1\,\,\,\, \mbox{\it
and}\,\,\,\,\, \alpha,\, \beta\in\mathcal{A} \,\,\,\,\,\,\,\,\,
\mbox{\it with} \,\,\,\,\,\, \pi_{0}(\beta)\neq 1.
\end{equation}
\end{defi}

It is clear that in case $\pi_0(\beta)=1$ then $f(\partial I_{\alpha})=\partial I_{\beta}$ for $\alpha=\pi_1^{-1}(1)$. Notice  that the no connection condition is a necessary and sufficient condition for $f$ to be infinitely renormalizable. Condition (\ref{Keane}) means that the orbits of the left end point of the subintervals $I_{\alpha},\,\alpha\in\mathcal{A}$ are disjoint when ever they can be.

Let $\varepsilon_{n}$ be the type of the $n$-th renormalization and let $\alpha_n(\varepsilon_{n})$ the winner and
$\alpha_n(1-\varepsilon_{n})$ be the loser of the $n$-th renormalization.

\begin{defi}
We say that g.i.e.m. $f$ has $k$- \textbf{bounded combinatorics},
if for each $n\in \mathbb{N}$ and $\beta,\, \gamma\in\mathcal{A}$
there exist $n_1, p\geq0$ with $|n-n_1|<k$ and $|n-n_1-p|<k$ such
that
$$\alpha_{n_1}(\varepsilon_{n_1})=\beta,\,
\alpha_{n_1+p}(1-\varepsilon_{n_1+p})=\gamma,\,\,\, \mbox{\it and}
$$
$$
\alpha_{n_1+i}(1-\varepsilon_{n_1+p})=\alpha_{n_1+i+1}(\varepsilon_{n_1+i}),\,\,\,\,\,\,\,\,
\mbox{\it for every} \,\,\,\,\,\, 0\leq i<p.
$$
\end{defi}

We say that g.i.e.m. $f: I\rightarrow I$ has \textbf{genus one} (or belongs to the \textbf{rotation class}), if $f$ has at most two discontinuities. Note that every g.i.e.m. with either two or three intervals has genus one. The genus of g.i.e.m. is invariant under renormalization.

\begin{rem}
	Everey orientation-preserving homeomorphism of the circle when viewed as a g.i.e.m. with $d\geq2$ intervals, has genus one.
\end{rem}

\section{Main Results}

Denote by $\mathbb{B}^{KO}$  the set of g.i.e.m. satisfying the following conditions:
\begin{itemize}

\item[(i)] for each $\alpha\in \mathcal{A}$ we can extend $f$ to $\overline{I}_{\alpha}$ as an orientation-preserving diffeomorphism satisfying the Katznelson and Ornstein's (KO, for short) smoothness condition: $f'$ is absolutely continuous and $f''\in L_{p}$, for some $p>1$;

\item[(ii)] the map $f$ has no connection;

\item[(iii)] the map $f$ has $k$- bounded combinatorics and has genus one.
\end{itemize}

The main idea of the renormalization group method is to study the behaviour of the renormalization map $R^{n}(f)$ as $n\rightarrow\infty$. For this usually rescaling of the coordinates is used.

Let $H$ be a non-degenerate interval and  $g: H\rightarrow \mathbb{R}$  be a diffeomorphism. We define the \textbf{Zoom} (renormalized coordinate) $Z_{H}(g)$ of $g$ in $H$ as follows:
$$
Z_{H}(g)=\tau^{-1}\circ g \circ \tau,
$$
where $\tau: [0, 1]\rightarrow H$ is an orientation-preserving affine map.

Denote by $q^n_{\alpha}\in \mathbb{N}$ the first return time of the interval $I^{(n)}_{\alpha}$ to the interval $I^{(n)}$, that is, $R^{n}(f)|_{I^{n}_{\alpha}}=f^{q^n_{\alpha}}$, for some $q^n_{\alpha}\in \mathbb{N}$. Define the fractional linear map $F_{n}: [0, 1]\rightarrow [0, 1]$ as follows:
\begin{equation}\label{Fn}
F_{n}(x)=\frac{xm_{n}}{1+x(m_{n}-1)},\,\,\,\, \mbox{\rm where} \,\,\,\,\, m_{n}=\exp\left\{-\sum\limits_{i=0}^{q_{\alpha}^{n}-1}\int_{I^{(n)}_{\alpha}}\frac{f''(t)}{2f'(t)}dt\right\}.
\end{equation}

Whenever necessary, we will use $D^{m}f$ instead of the $m^{th}$ derivative of $f$. The first result of our present paper is the following

\begin{theo}\label{main1} Let $f\in \mathbb{B}^{KO}$. Then for all $\alpha\in
\mathcal{A}$ the following bounds hold:
$$
\|Z_{I^{(n)}_{\alpha}}(R^{n}(f))-F_{n}\|_{C^1[0,
1])}\leq \delta_{n},\,\,\,\,\,\,\,\,
\|Z_{I^{(n)}_{\alpha}}(D^2R^{n}(f))-D^2F_{n}\|_{L_1([0,
1], d\ell)}\leq \delta_{n},
$$
where $\delta_{n}=\mathcal{O}(\lambda^{n}+\eta_{n}),\,\, \lambda\in (0, 1)$ and $\eta_{n}\in l_{2}.$ 
\end{theo}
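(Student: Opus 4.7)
The plan is to express $R^n(f)$ restricted to each renormalization piece $I^{(n)}_\alpha$ as the iterate $f^{q^n_\alpha}$ and then, in the rescaled coordinate, compare the logarithmic derivative and the nonlinearity of this iterate with those of $F_n$. The Möbius map $F_n$ is the unique fractional linear map sending $0\mapsto 0$, $1\mapsto 1$, with derivative $m_n$ at $0$; equivalently, $(F_n')^{-1/2}$ is the affine function $x\mapsto (1+x(m_n-1))/\sqrt{m_n}$, and its nonlinearity is an explicit rational function. Proving the theorem therefore reduces to showing that $\bigl(DZ_{I^{(n)}_\alpha}(R^n f)\bigr)^{-1/2}$ is affine up to a $C^0$-error of size $\delta_n$, and that $D^2 Z_{I^{(n)}_\alpha}(R^n f)/DZ_{I^{(n)}_\alpha}(R^n f)$ matches the Möbius nonlinearity in $L_1$ with the same error rate.

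By the chain rule, on $I^{(n)}_\alpha$ we have
\[
\log D(R^n f)(x) \;=\; \sum_{i=0}^{q^n_\alpha-1}\log f'\!\bigl(f^i(x)\bigr),\qquad \frac{D^2(R^n f)}{D(R^n f)}(x) \;=\; \sum_{i=0}^{q^n_\alpha-1} N_f\!\bigl(f^i(x)\bigr)\, D(f^i)(x),
\]
where $N_f = f''/f'$ is the nonlinearity, which by the KO-smoothness hypothesis lies in $L_p$ for some $p>1$. The strategy is to replace each summand by its average over the fibre $I^{(n)}_\alpha$, producing a deterministic main term plus a fluctuation. The main terms, summed over the orbit segment, reproduce precisely $-\log m_n$ and the corresponding Möbius nonlinearity, yielding the comparison with $F_n$; the fluctuation is the quantity to be controlled.

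Following the strategy of Katznelson--Ornstein~\cite{KO1989.2}, refined in the piecewise $C^{2+\nu}$ case by Cunha--Smania~\cite{CS2013,CS2014} and in the one-break KO case by~\cite{BDM2014}, I would set up a martingale associated with the dynamical partitions $\{f^{-i}(I^{(n)}_\beta)\}$ and the conditional expectations of $\log f'$ or $N_f$. The bounded-combinatorics and genus-one assumptions guarantee Denjoy-type distortion bounds and geometric shrinkage of partition elements, producing the exponential $\lambda^n$ part of $\delta_n$. Orthogonality of martingale differences in $L^2$, combined with the $L_p$-integrability of $N_f$, bounds the fluctuation at level $n$ by a sequence $\eta_n$ whose squared sum is controlled by $\|N_f\|_{L_p}^2$, yielding $\eta_n\in\ell^2$. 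For the first estimate one applies this to $\log f'$ and converts the $L^2$-control of sums into the stated $C^0$-bound for $\bigl(DZ_{I^{(n)}_\alpha}(R^n f)\bigr)^{-1/2}$ via the affine-coordinate setup together with a one-dimensional Sobolev embedding; for the second estimate one applies the martingale bound to $N_f$ directly and passes from $L^2$- to $L^1$-control by Cauchy--Schwarz on the rescaled interval $[0,1]$.

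The hardest step, and the technical heart of the argument, is the martingale $\ell^2$-estimate for the nonlinearity $N_f$ under only $L_p$-integrability. Unlike the $C^{2+\nu}$ setting, where pointwise bounds on $N_f$ permit a direct summation, here the orthogonality must be organized against a filtration compatible with the Rauzy--Veech combinatorics, the partition estimates must be sharp enough to absorb the failure of a uniform bound on $N_f$, and the summability of the per-level variances has to be bootstrapped from $L_p$-norms using interpolation and the bounded-combinatorics geometry. A secondary obstacle is upgrading the $L^2$-fluctuation bound for the logarithmic derivative to a $C^1$-bound: this will require differentiating once more and reapplying the martingale estimate, while using the genus-one hypothesis to keep the number of break points of the rescaled map bounded uniformly in $n$.
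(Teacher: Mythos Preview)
Your proposal correctly identifies the martingale technique of Katznelson--Ornstein as the key analytical tool, and the roles of bounded combinatorics and genus one for the Denjoy-type geometry are accurately described. However, there is a genuine gap in the structural part of the argument: the decomposition you propose does not produce the M\"obius map $F_n$ as the main term.

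Concretely, if you replace each summand $\log f'(f^i(x))$ (respectively $N_f(f^i(x))\,Df^i(x)$) by its average over the fibre $I^{(n)}_\alpha$, the resulting ``main term'' is a constant in $x$; summing over $i$ you recover the numbers $\int_0^1\log DZ_{I^{(n)}_\alpha}(R^nf)\,dz_0$ and $-2\log m_n$, respectively. A small fluctuation around a constant would show that the rescaled map is close to \emph{affine}, not that it is close to the specific M\"obius map $F_n$. The quantity $-\log m_n$ is the integral of half the nonlinearity, not the logarithmic derivative of $F_n$, and there is no mechanism in your outline by which the function $z_0\mapsto m_n/(1+z_0(m_n-1))^2$ emerges from the averages. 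The assertion that ``the main terms reproduce the M\"obius nonlinearity'' is therefore unsubstantiated.

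The paper extracts the M\"obius structure by a different route. Writing the relative coordinate $z_i=(f^i(x)-a_i)/(b_i-a_i)$ on each orbit interval $[a_i,b_i]$, a second-order Taylor expansion of $f$ gives the exact recursion $z_{i+1}=z_i(1+A_i(z_i-1))$, which after iteration yields the closed formula
\[
z_{q_n}\;=\;\frac{z_0\,m_n\,e^{\tau_n(z_0)}}{1+z_0\bigl(m_n\,e^{\tau_n(z_0)}-1\bigr)},
\]
i.e.\ $z_{q_n}$ is \emph{exactly} a M\"obius map with parameter $m_n e^{\tau_n}$ in place of $m_n$. The entire error is thus encoded in the single scalar function $\tau_n(z_0)=\sum_i\psi_i(z_0)$, and the problem reduces to bounding $|\tau_n|$, $|(z_0-z_0^2)\tau_n'|$, $\int|\tau_n'|$, and $\int|(z_0-z_0^2)\tau_n''|$. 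Only at this stage is the martingale expansion of $f''/f'$ against the dynamical partitions invoked, to estimate the concrete sums $S_n^{(1)}$, $E_n$, $Q_n$, $U_n$ that arise when one expands $\tau_n$ and its derivatives. The Sobolev-type step you mention is not needed: the $C^0$ bound on $\tau_n$ comes directly from the pointwise estimate of these sums, not from an embedding of an $L^2$ bound.

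In short, the martingale half of your plan is right, but you are missing the algebraic step (the $z_i$-recursion, going back to Khanin--Vul) that isolates $F_n$ and reduces everything to estimating $\tau_n$. Without it, your averaging scheme compares $R^nf$ to the wrong model.
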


\smallskip

Denote by $\mathbb{B}^{KO}_{\star}$  the subset of functions $f\in \mathbb{B}^{KO}$ satisfying \textbf{zero mean nonlinearity} condition:
$$
\int\limits_{[0, 1]}\frac{f''(t)}{f'(t)}dt=0.
$$
Our second result is a consequence of Theorem \ref{main1}.

\begin{theo}\label{main2} Let $f\in \mathbb{B}^{KO}_{\star}$.
Then for all $\alpha\in
\mathcal{A}$ the following bounds hold:
$$
\|Z_{I^{(n)}_{\alpha}}(R^{n}(f))-Id\|_{C^1[0, 1])}\leq
\delta_{n},\,\,\,\,\,\,\,\,
\|Z_{I^{(n)}_{\alpha}}(D^2R^{n}(f))\|_{L_1([0, 1], d\ell)}\leq
\delta_{n},
$$
where $\delta_{n}=\mathcal{O}(\lambda^{\sqrt{n}}+\eta_{n}),\,\, \lambda\in (0, 1)$ and $\eta_{n}\in l_{2}.$ 
\end{theo}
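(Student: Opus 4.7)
The plan is to derive Theorem \ref{main2} from Theorem \ref{main1} by showing that, under the zero mean nonlinearity condition, the parameter $m_n$ appearing in the definition (\ref{Fn}) of the M\"obius family $F_n$ satisfies $|m_n - 1| = \mathcal{O}(\lambda^{\sqrt{n}} + \eta_n)$. Note that $F_n$ reduces to the identity precisely when $m_n = 1$, and an elementary computation on the explicit formula for $F_n$ shows that both $\|F_n - \mathrm{Id}\|_{C^1([0,1])}$ and $\|D^2 F_n\|_{L_1([0,1])}$ are bounded by $C|m_n - 1|$ as long as $m_n$ stays in a fixed compact subset of $(0,\infty)$. Combined with the two estimates of Theorem \ref{main1} and the triangle inequality, the problem is reduced to estimating $|m_n - 1|$.

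The next step is to exploit the zero mean hypothesis. Rewriting the exponent in (\ref{Fn}) via the change of variable $t = f^i(s)$ yields, up to the factor $1/2$,
\[
S_n(\alpha) := \sum_{i=0}^{q_\alpha^n - 1} \int_{f^i(I^{(n)}_\alpha)} \frac{f''(t)}{f'(t)}\, dt.
\]
Because the collection $\{f^i(I^{(n)}_\alpha) : \alpha \in \mathcal{A},\ 0 \leq i < q_\alpha^n\}$ is precisely the $n$-th dynamical partition of $I$, the hypothesis $\int_{[0,1]} f''/f'\, dt = 0$ gives the global cancellation $\sum_{\alpha \in \mathcal{A}} S_n(\alpha) = 0$. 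The remaining and genuine task is to convert this averaged cancellation into a pointwise bound on each individual $S_n(\alpha)$.

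For this I would invoke the martingale framework of Katznelson and Ornstein \cite{KO1989.2}, in the form adapted to Rauzy--Veech renormalizations in \cite{BDM2014}. Conditioning the observable $f''/f'$ along the filtration generated by the dynamical partitions $\xi_n$ produces a martingale whose differences have the $L_p$ integrability guaranteed by the KO assumption $D\log Df \in L_p$, $p > 1$; the $k$-bounded combinatorics provides the mixing between successive renormalization levels needed to redistribute the global cancellation among the letters of $\mathcal{A}$. To extract the $\sqrt{n}$ rate, I would split the first $n$ renormalization steps into $\sqrt{n}$ blocks of length $\sqrt{n}$: on each block Theorem \ref{main1} provides an exponential contraction at rate $\lambda^{\sqrt{n}}$ together with an $l_2$-summable residue, and summing $\sqrt{n}$ such contributions via a martingale $L_2$ inequality (or Cauchy--Schwarz) yields $|S_n(\alpha)| = \mathcal{O}(\lambda^{\sqrt{n}} + \eta_n)$ uniformly in $\alpha$. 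Exponentiating then gives $|m_n - 1| = \mathcal{O}(|S_n(\alpha)|)$ and the conclusion follows.

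The main obstacle is this third step. The zero mean hypothesis provides cancellation only after summation over $\alpha$, so the martingale machinery must be set up in such a way as to distribute this cancellation among the individual letters; in particular, the interplay between the quadratic variation of the martingale and the merely $L_p$ (rather than H\"older) regularity of the nonlinearity is what forces the $l_2$-summable correction $\eta_n$ instead of a pure exponential rate, and optimising the block length produces the characteristic $\sqrt{n}$ exponent rather than $n$.
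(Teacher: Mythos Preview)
Your first two steps are correct and match the paper exactly: the reduction via Theorem~\ref{main1} to the estimate $|m_n-1|=\mathcal{O}(\lambda^{\sqrt{n}}+\eta_n)$ (equivalently $|\log m_n|$ small), and the observation that the zero-mean hypothesis gives only the aggregate cancellation $\sum_{\alpha\in\mathcal{A}} S_n(\alpha)=0$.

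The gap is in your third step, and you have correctly identified it as the crux. Your proposed mechanism, however, does not work. You suggest splitting the $n$ renormalization steps into $\sqrt{n}$ blocks of length $\sqrt{n}$ and invoking Theorem~\ref{main1} on each block. But Theorem~\ref{main1} says the rescaled renormalization is close to the M\"obius map $F_n$ \emph{parametrised by $m_n$}; it says nothing about $m_n$ itself being close to $1$. The number $m_n$ sits in the limit object, not in the error term, so iterating Theorem~\ref{main1} over blocks cannot by itself force $S_n(\alpha)\to 0$. Nor does a martingale $L_2$ inequality redistribute the cancellation $\sum_\alpha S_n(\alpha)=0$ among the letters: the martingale increments $h_m$ control oscillations of $f''/f'$ \emph{within} partition atoms, not the relative weights of the towers over different letters.

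What the paper actually uses is an equidistribution (ergodic) estimate proved in \cite{CS2013}: setting $r=[n/2]$, for every $\alpha,\beta$ and every $j$,
\[
\left|\frac{\sum_{f^{i}(I^{(n)}_{\alpha})\subset f^{j}(I^{(r)}_{\beta})}|f^{i}(I^{(n)}_{\alpha})|}{|f^{j}(I^{(r)}_{\beta})|}-\frac{\sum_{i=1}^{q^n_{\alpha}}|f^{i}(I^{(n)}_{\alpha})|}{|I|}\right|\leq C\lambda^{\sqrt{n}}.
\]
This says that the $\alpha$-tower at level $n$ is $\lambda^{\sqrt{n}}$-equidistributed with respect to the partition $\xi_r$. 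One then writes $S_n(\alpha)$ as a sum over atoms $f^j(I^{(r)}_\beta)$ of $\xi_r$ and compares it to the global density $\big(\sum_i|f^i(I^{(n)}_\alpha)|/|I|\big)\int_{[0,1]}f''/f'=0$. The comparison splits into two pieces: one ($\Lambda_n^{(2)}$) controlled directly by the equidistribution bound above, and one ($\Lambda_n^{(1)}$) controlled by the martingale approximation of $f''/f'$ at scale $r$ (here the functions $h_m$ are constant on $\xi_r$-atoms for $m\le r$, which kills the difference after a change of variable). The $\sqrt{n}$ rate comes from the ergodic theorem with the two-scale choice $r=[n/2]$, not from any block decomposition of the renormalization orbit. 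This equidistribution input is the missing ingredient in your outline.
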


\smallskip

\begin{rem} The sequence $\eta_{n}$ in Theorems \ref{main1} and \ref{main2} has an explicit form and is given in Proposition \ref{etan}.
\end{rem}

\begin{rem} The class $\mathbb{B}^{KO}$ is wider than $\mathbb{B}^{2+\nu}$ considered in \cite{CS2013}. However, the rate of approximations in Theorems \ref{main1} and \ref{main2} is not exponential, contrary to the class $\mathbb{B}^{2+\nu}$.
\end{rem}


The structure of the paper is as follows. In Section 4 we formulate some facts on dynamical partitions generated by interval exchange maps. Following Katznelson and Ornstein \cite{KO1989.2} we define a sequence of piecewise constant functions which generate a finite martingale. In Section 5 and Section 6, using the martingale expansion for the nonlinearity of $f$, we obtain some estimates for the sum of integrals of the nonlinearities of $f$.  Finally, in Section 7 we prove our main theorems.

\section{The dynamical partition and a martingale}

Let $(f, \mathcal{A}, \mathcal{P})$ be a g.i.e.m. with $d$ intervals and $\mathcal{P}=\{I_{\alpha}:
\alpha\in \mathcal{A}\}$ be the initial $\mathcal{A}$- indexed partition of $I$. For specificity we take $I=[0, 1)$. Suppose that $f$ is infinitely renormalizable. Let $I^{(n)}$ be the domain of $R^n(f)$. Note that $I^{(n)}$ is the nested sequence of subintervals, with the same left endpoint of $I$. We want to construct the dynamical partition of $I$ associated to the domain of $R^n(f)$.

As mentioned above, $R(f)$ is g.i.e.m. with $d$ intervals and the intervals $I^{(1)}_{\alpha}$ generate an $\mathcal{A}$- indexed partition of $I^{(1)}$, denoted by $\mathcal{P}^1$. By induction one can check, that $R^n(f)$ is g.i.e.m. with $d$ intervals. Let $\mathcal{P}^{n}=\{I^{(n)}_{\alpha}: \alpha\in \mathcal{A}\}$ be the $\mathcal{A}$- indexed partition of $I^{(n)}$, generated by $R^{n}(f)$. We call $\mathcal{P}^{n}$ the {\bf fundamental partition} and $I^{(n)}_{\alpha}$ the \textbf{fundamental segments} of rank $n$.

Since $R^n(f)$ is the first return map for $f$ to the interval $I^{(n)}$, each fundamental segment $I^{(n)}_{\alpha}\in \mathcal{P}^{n}$ returns to $I^{(n)}$ under certain iterates of the map $f$. Until returning, these intervals will be in the interval $I\setminus I^{(n)}$ for some time. Consequently the system of
intervals (their interiors are mutually disjoint)
$$
\xi_n=\left\{ f^{i}(I^{(n)}_{\alpha}),\,\, 0\leq i\leq q_{\alpha}^{n}-1,\,\, \alpha\in \mathcal{A}\right\}
$$
cover the whole interval and form a partition of $I$.

The system of intervals $\xi_n$ is called the \textbf{n-th dynamical partition} of $I$. The dynamical partitions $\xi_n$ are refined with increasing $n$, where $\xi_{n+1}\supset\xi_{n}$ means that
any element of the preceding partition is a union of a number of elements of the next partition, or belongs to the next partition. Denote by $\xi^{pr}_{n+1}$ the system of preserved intervals of $\xi_{n}$. More precisely, if $R^nf$ has type 0
$$\xi^{pr}_{n+1}=\left\{ f^{i}(I^{(n)}_{\alpha}),\,\, 0\leq i\leq q^{n}_{\alpha}-1,\,\, \mbox{~for~} \alpha\not=\alpha(0)\right\},$$
and if $R^nf$ has type 1

$$\xi^{pr}_{n+1}=\left\{ f^{i}(I^{(n)}_{\alpha}),\,\, 0\leq i\leq q^{n}_{\alpha}-1,\,\, \mbox{~for~} \alpha\not=\alpha(1)\right\}.$$

Let $\xi^{tn}_{n+1}:=\xi_{n+1}\setminus \xi^{pr}_{n+1}$ be the set of elements of $\xi_{n+1}$ which are properly contained in some element of $\xi_n.$ Therefore if $R^nf$ has type 0

\begin{eqnarray*}
\xi^{tn}_{n+1}&=&\left\{ f^i(I^{(n+1)}_{\alpha(0)}),\,\, 0\leq i< q^{n}_{\alpha(0)}\right\}\bigcup \left\{ f^i(I^{(n+1)}_{\alpha(1)}),\,\, 0\leq i< q^{n}_{\alpha(0)}\right\}\\
&=&\bigcup_{i=0}^{q^{n}_{\alpha(0)}-1}\left\{f^i\left(I^{(n)}_{\alpha(0)}\setminus f^{q^n_{\alpha(1)}}I^{(n)}_{\alpha(1)}\right)\right\}\bigcup \bigcup_{i=q^n_{\alpha(1)}}^{q^n_{\alpha(1)}+q^n_{\alpha(0)}-1}\left\{ f^i(I^{(n)}_{\alpha(1)})\right\},
\end{eqnarray*}

and if $R^nf$ has type 1

\begin{eqnarray*}
	\xi^{tn}_{n+1}&=&\left\{f^i(I^{(n+1)}_{\alpha(0)}),\,\, 0\leq i< q^{n}_{\alpha(1)}\right\}\bigcup\left\{f^i(I^{(n+1)}_{\alpha(1)}),\,\, 0\leq i< q^{n}_{\alpha(1)}\right\}\\
	& = & \bigcup_{i=0}^{q^n_{\alpha(1)}-1}\left\{f^i\left(f^{-q^n_{\alpha(1)}}(I^{(n)}_{\alpha(0)})\right)\right\}\bigcup \bigcup_{i=0}^{q^n_{\alpha(1)}-1}\left\{f^i\left(I^{(n)}_{\alpha(1)}\setminus f^{-q^n_{\alpha(1)}}(I^{(n)}_{\alpha(0)})\right)\right\}.
\end{eqnarray*}

So, the partition $\xi_{n+1}$ consists of the \emph{preserving} elements of $\xi_{n}$ and the \emph{images of two (new) intervals} for defining $R^{n+1}(f)$, that is, $\xi_{n+1}=\xi^{pr}_{n+1}\cup \xi^{tn}_{n+1}$. Note also that for the first return time $q^{n}_{\alpha}$, we have:
\begin{itemize}
\item[(1)] if \ $\alpha=\alpha^{n}(\varepsilon)$, then \ $q^{n+1}_{\alpha^{n}(\varepsilon)}=q^{n}_{\alpha^{n}(\varepsilon)}$;
\item[(2)] if \ $\alpha=\alpha^{n}(1-\varepsilon)$, then \ $q^{n+1}_{\alpha^{n}(1-\varepsilon)}=q^{n}_{\alpha^{n}(1-\varepsilon)}+q^{n}_{\alpha^{n}(\varepsilon)}$.
\end{itemize}

\bigskip

\textbf{Martingale.} Now we define a martingale generated by the dynamical partitions associated to g.i.e.m., and give its some properties which will be used in the proof of our results. A similar martingale generated by dynamical partitions associated to circle maps was considered in \cite{BDM2014} and \cite{KO1989.1}.

Let $g: I\rightarrow I$ be a function of class $\mathbb{L}_{p}(I, d\ell),\,\, p>1$. Using the dynamical
partitions $\xi_{n}$, we define a sequence of piecewise constant functions $\Phi_{n}: I\rightarrow R^{1},\,\,n\geq1$, on $I$ as follows
\begin{equation}\label{phin}
    \Phi_{n}(x):=\frac{1}{|\Delta^{(n)}|}
\int\limits_{\Delta^{(n)}}g(y)dy ,\,\,\ x\in
\Delta^{(n)},
\end{equation}
where $\Delta^{(n)}$ is an interval of the partition $\xi_{n}$.

\begin{theo}\label{martin1} Let $g\in \mathbb{L}_{p}(I, d\ell),\, p>1$. Then the sequence of piecewise functions
$\{\Phi_{n}(x),\, n\geq1\}$ generate a finite martingale with respect to the dynamical partition $\xi_{n}$.
\end{theo}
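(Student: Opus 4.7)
The statement is essentially a structural observation: the sequence $\Phi_n$ is the sequence of conditional expectations of $g$ with respect to the $\sigma$-algebras generated by the dynamical partitions $\xi_n$, and such sequences are automatically martingales. The plan is therefore to unwind the definitions and check the two defining properties (adaptedness and the conditional-expectation identity) directly from the refinement relation $\xi_{n+1}\supset\xi_n$ established earlier in the section.

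First I would set $\mathcal{F}_n$ to be the (finite) $\sigma$-algebra on $I=[0,1)$ generated by the elements of $\xi_n$. The refinement property $\xi_{n+1}\supset\xi_n$ (each element of $\xi_n$ is either an element of $\xi_{n+1}$ or a disjoint union of elements of $\xi_{n+1}$, as recorded in the decomposition $\xi_{n+1}=\xi_{n+1}^{pr}\cup \xi_{n+1}^{tn}$) implies $\mathcal{F}_n\subset \mathcal{F}_{n+1}$, so $\{\mathcal{F}_n\}_{n\ge 1}$ is a filtration. By definition (\ref{phin}), $\Phi_n$ is constant on every $\Delta^{(n)}\in\xi_n$, hence $\mathcal{F}_n$-measurable; and since $g\in \mathbb{L}_p(I,d\ell)\subset \mathbb{L}_1(I,d\ell)$, the integrals defining $\Phi_n$ are finite, so $\Phi_n\in \mathbb{L}_1(I,d\ell)$.

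Next, I would verify the martingale identity
\[
\mathbb{E}[\Phi_{n+1}\mid \mathcal{F}_n]=\Phi_n \quad \text{almost everywhere.}
\]
Since $\mathcal{F}_n$ is generated by the finite partition $\xi_n$, it suffices to show that for every $\Delta^{(n)}\in\xi_n$,
\[
\frac{1}{|\Delta^{(n)}|}\int_{\Delta^{(n)}}\Phi_{n+1}(y)\,dy \;=\; \frac{1}{|\Delta^{(n)}|}\int_{\Delta^{(n)}}g(y)\,dy.
\]
Write $\Delta^{(n)}=\bigsqcup_j \Delta^{(n+1)}_j$ with $\Delta^{(n+1)}_j\in\xi_{n+1}$ (the decomposition is either trivial, if $\Delta^{(n)}\in \xi_{n+1}^{pr}$, or given by the two-interval subdivisions in the $\xi_{n+1}^{tn}$ formulas for type $0$ and type $1$). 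Since $\Phi_{n+1}\equiv \frac{1}{|\Delta^{(n+1)}_j|}\int_{\Delta^{(n+1)}_j}g\,d\ell$ on $\Delta^{(n+1)}_j$, we have $\int_{\Delta^{(n+1)}_j}\Phi_{n+1}\,d\ell = \int_{\Delta^{(n+1)}_j} g\,d\ell$, and summing over $j$ gives the claimed equality.

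Finally, I would comment on the word \emph{finite}: at each level $n$ the partition $\xi_n$ has finitely many atoms (their total number being bounded by $\sum_\alpha q_\alpha^n$), so each $\Phi_n$ takes only finitely many values and the martingale is in fact $L_p$-bounded for the same $p>1$ as $g$, which is what the later estimates in Sections~5--6 will exploit. I do not expect any genuine obstacle here: the only point that requires care is the accurate identification of the elements of $\xi_{n+1}$ contained in a given $\Delta^{(n)}\in\xi_n$, and this is exactly what the explicit formulas for $\xi_{n+1}^{pr}$ and $\xi_{n+1}^{tn}$ in both type $0$ and type $1$ provide; once those are on the table, the martingale identity collapses to additivity of the Lebesgue integral.
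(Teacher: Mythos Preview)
Your proposal is correct and follows essentially the same approach as the paper: both verify adaptedness and then check the conditional-expectation identity $E(\Phi_{n+1}\mid\xi_n)=\Phi_n$ atom by atom, splitting into the case where $\Delta^{(n)}$ is preserved in $\xi_{n+1}$ and the case where it is subdivided, and reducing the computation to additivity of the integral. The paper writes this out a bit more explicitly with characteristic functions, but the argument is the same.
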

\begin{proof} Note that each $\Phi_{n}(x)$ is a step function, which takes constant values on each element $I^{(n)}_{\alpha}$ of the partition $\xi_{n}$. It follows that $\Phi_{n}(x)$ is $\xi_{n}$-
measurable.  Therefore, it is enough to show that
$$
E(\Phi_{n+1}/ \xi_n)=\Phi_{n},\,\, \mbox{\rm for all}\,\,\, n\geq 1,
$$
where $E(\Phi_{n+1}/\xi_n)$ is a conditional expectation of the random variable $\Phi_{n+1}$ with respect to the partition $\xi_n$. Define the characteristic functions on the elements of $\xi_{n}$:
$$
X^{(n)}_{\alpha, i}(x)=\left\{\begin{array}{ll} 1,\,\,\,
\mbox{\rm if}\,\,\,\, x\in f^{i}\left(I^{(n)}_{\alpha}\right),\\
0,\,\,\, \mbox{\rm if}\,\,\,\, x\notin f^{i}\left(I^{(n)}_{\alpha}\right).
\end{array}\right.
$$
where $\alpha\in\mathcal{A}$ and $0\leq i\leq q_{\alpha}^n-1$. By
definition of conditional expectation with respect to the partition, we have
\begin{equation}\label{condsum}
E(\Phi_{n+1}/ \xi_n)=\sum\limits_{\alpha\in
\mathcal{A}}\sum\limits_{i=0}^{q^{n}_{\alpha}-1}E\left(\Phi_{n+1}/f^{i}(I^{(n)}_{\alpha})\right)X^{(n)}_{\alpha,
i}(x).
\end{equation}
Recall, that the partition $\xi_{n+1}$ consists of the \emph{preserving} elements of $\xi_{n}$ and the \emph{images of two (new) intervals} for defining $R^{n+1}(f)$, that is, $\xi_{n+1}=\xi^{pr}_{n+1}\cup \xi^{tn}_{n+1}$. Split the sum (\ref{condsum}) in to two sums corresponding to $\xi^{pr}_{n+1}$ and $\xi^{tn}_{n+1}$:
\begin{equation}\label{twosum}
E(\Phi_{n+1}/
\xi_n)=\sum\limits_{J_{i}\in\xi^{pr}_{n+1}}E\left(\Phi_{n+1}/J_{i}\right)X^{(n)}_{\alpha,i}(x)+
\sum\limits_{J_{i}\in\xi^{tn}_{n+1}}E\left(\Phi_{n+1}/J_{i}\right)X^{(n)}_{\alpha,i}(x),
\end{equation}
where $J_{i}=f^{i}(I^{(n)}_{\alpha})$. Consider first the sum corresponding to $\xi^{pr}_{n+1}$ in (\ref{twosum}). Then
\begin{equation}\label{sum1}
E(\Phi_{n+1}/J_{i})=\int\limits_{[0,
1]}\Phi_{n+1}(x)\ell(dx/f^{i}(I^{(n)}_{\alpha}))=\frac{1}{|f^{i}(I^{(n)}_{\alpha})|}\int\limits_{f^{i}(I^{(n)}_{\alpha})}\Phi_{n+1}(x)dx=
\end{equation}
$$
=\frac{1}{|f^{i}(I^{(n)}_{\alpha})|}\int\limits_{f^{i}(I^{(n)}_{\alpha})}
\left(\frac{1}{|f^{i}(I^{(n)}_{\alpha})|}\int\limits_{f^{i}(I^{(n)}_{\alpha})}g(y)dy\right)dx=
\frac{1}{|f^{i}(I^{(n)}_{\alpha})|}\int\limits_{f^{i}(I^{(n)}_{\alpha})}g(y)dy.
$$
Next we consider the sum corresponding to $\xi^{tn}_{n+1}$ in (\ref{twosum}). Let $J_{i}:=\bigcup I^{(n+1)}_{\alpha}$, where $I^{(n+1)}_{\alpha}\in \xi^{tn}_{n+1}$. Then we obtain
$$
E(\Phi_{n+1}/J_{i})=\frac{1}{|f^{i}(I^{(n)}_{\alpha})|}
\int\limits_{f^{i}(I^{(n)}_{\alpha})}\Phi_{n+1}(x)dx=
\frac{1}{|f^{i}(I^{(n)}_{\alpha})|}\sum\limits_{I^{(n+1)}_{\alpha}\in J_{i}}\int_{I^{(n+1)}_{\alpha}}\Phi_{n+1}(x)dx=
$$
$$
=\frac{1}{|f^{i}(I^{(n)}_{\alpha})|}
\sum\limits_{I^{(n+1)}_{\alpha}\in J_{i}}\int_{I^{(n+1)}_{\alpha}}\left(\frac{1}{|I^{(n+1)}_{\alpha}|}
\int\limits_{I^{(n+1)}_{\alpha}}g(y)dy\right)dx=\frac{1}{|f^{i}(I^{(n)}_{\alpha})|}\int\limits_{f^{i}(I^{(n)}_{\alpha})}g(y)dy.
$$
This, and equations in (\ref{twosum}), (\ref{sum1}) imply the result.
\end{proof}

Denote by $\|f\|_{p}$ the norm of $f$ in $\mathbb{L}_{p}(I,\;d\ell),\,\ p>1.$

\begin{theo}\label{martin2} Let \, $g\in L_{p}(I,\, d\ell),\,\, p>1$. Then
$$
\lim\limits_{n\rightarrow \infty}\|g-\Phi_{n}\|_{2}=0.
$$
\end{theo}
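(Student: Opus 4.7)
The plan is a standard two-step density argument for the martingale: first establish convergence on the dense subspace of continuous functions using that the partition mesh tends to zero, then extend to all $g \in L_p$ by density and the $L_p$-contractivity of the conditional expectation operator.

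The key geometric input is that $\mathrm{mesh}(\xi_n) := \max_{\Delta \in \xi_n}|\Delta|$ tends to zero as $n \to \infty$. The no-connection property (Keane's condition) forces $|I^{(n)}| \to 0$. Combined with the $k$-bounded combinatorics hypothesis and Denjoy-type bounded distortion estimates valid at KO-smoothness, each element $f^i(I^{(n)}_\alpha)$ of $\xi_n$ has length comparable to $|I^{(n)}|$ up to a uniform multiplicative constant, and hence shrinks uniformly in $n$.

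With this in hand, for any $\tilde g \in C(\overline{I})$ uniform continuity gives, for $x \in \Delta \in \xi_n$,
$$
|\Phi_n(x) - \tilde g(x)| = \left|\frac{1}{|\Delta|}\int_{\Delta}(\tilde g(y) - \tilde g(x))\,dy\right| \leq \omega_{\tilde g}\bigl(\mathrm{mesh}(\xi_n)\bigr),
$$
where $\omega_{\tilde g}$ is the modulus of continuity of $\tilde g$. Hence $\|\Phi_n - \tilde g\|_\infty \to 0$ and in particular $\|\Phi_n - \tilde g\|_2 \to 0$. For general $g \in L_p(I,d\ell)$, $p > 1$, choose by density $\tilde g \in C(\overline{I})$ with $\|g - \tilde g\|_p < \varepsilon$ and let $\tilde\Phi_n := E(\tilde g \mid \xi_n)$ be its martingale approximation. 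Jensen's inequality gives the contractivity $\|\Phi_n - \tilde\Phi_n\|_p = \|E(g - \tilde g \mid \xi_n)\|_p \leq \|g - \tilde g\|_p < \varepsilon$, and since $|I|<\infty$ the standard Hölder comparison of $L_p$ norms yields a constant $C = C(p,|I|)$ with
$$
\|g - \Phi_n\|_2 \leq \|g - \tilde g\|_2 + \|\tilde g - \tilde\Phi_n\|_2 + \|\tilde\Phi_n - \Phi_n\|_2 \leq C\varepsilon + \|\tilde g - \tilde\Phi_n\|_2,
$$
so letting first $n \to \infty$ and then $\varepsilon \to 0$ concludes the proof.

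The main obstacle is the geometric mesh-shrinkage claim. Although classical for smooth systems satisfying Keane's condition, at the low regularity level of $\mathbb{B}^{KO}$ one must carefully verify that Denjoy-style bounded-distortion estimates still give uniform comparability of lengths of partition elements across a renormalization step; this is where the $k$-bounded combinatorics assumption and the KO-smoothness condition ($f'$ absolutely continuous with $f'' \in L_p$) must be combined, along the lines of the arguments in \cite{BDM2014} and \cite{KO1989.2}.
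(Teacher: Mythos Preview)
Your proposal is correct and follows essentially the same route as the paper: a density argument using continuous functions, the mesh shrinkage of the dynamical partitions (established in the paper as Corollary~\ref{normxinm} for the class $\mathbb{B}^{1+bv}\supset\mathbb{B}^{KO}$, so your ``main obstacle'' is already handled there), and contractivity of the conditional expectation. The only cosmetic difference is that the paper writes $g=\omega_\varepsilon+\psi_\varepsilon$ with $\|\psi_\varepsilon\|_2<\varepsilon$ and bounds the two pieces of $\|g-\Phi_n\|_2^2$ directly, whereas you approximate in $L_p$ first and then pass to $L_2$ via H\"older.
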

\begin{proof} Note that the functions of the class $L_{p}$ are well approximated by continuous functions, that is, if \, $g\in L_{p}(I,\, d\ell),\,\, p>1$ \, then for any $\varepsilon>0$, there exist an uniformly continuous function $\omega_{\varepsilon}$ and a summable function $\psi_{\varepsilon}$ such that
$$
g(x)=\omega_{\varepsilon}(x)+\psi_{\varepsilon}(x),\,\,\, x\in I,\,\,\,\,\, \mbox{\rm and moreover},\,\,\,\  \|\psi_{\varepsilon}\|_{2}<\varepsilon.
$$

Consider the partition $\xi_{n}$. Then $[0, 1]=\bigvee\limits_{\alpha\in
\mathcal{A}}{\bigvee\limits_{i=0}^{q^n_{\alpha}-1}f^i(I^{(n)}_{\alpha})}$.
Using the above expansion for $g$ we get
$$
\|g-\Phi_{n}\|^{2}_{L_{2}}=\int\limits_{[0, 1]}|g(x)-\Phi_{n}(x)|^2dx=
\sum\limits_{\alpha\in
\mathcal{A}}\sum\limits_{i=0}^{q_{\alpha}^{n}-1}\int\limits_{f^i(I^{(n)}_{\alpha})}|\Phi_{n}(x)-
(\omega_{\varepsilon}(x)+\psi_{\varepsilon}(x))|^{2}dx\leq
$$
$$
\leq 2\sum\limits_{\alpha\in
\mathcal{A}}\sum\limits_{i=0}^{q_{\alpha}^{n}-1}\int\limits_{f^i(I^{(n)}_{\alpha})}
\left|\frac{1}{|f^i(I^{(n)}_{\alpha})|}\int\limits_{f^i(I^{(n)}_{\alpha})}\omega_{\varepsilon}(y)dy-
\omega_{\varepsilon}(x)\right|^{2}dx+
$$
$$
+2\sum\limits_{\alpha\in
\mathcal{A}}\sum\limits_{i=0}^{q_{\alpha}^{n}-1}\int\limits_{f^i(I^{(n)}_{\alpha})}
\left|\frac{1}{|f^i(I^{(n)}_{\alpha})|}\int\limits_{f^i(I^{(n)}_{\alpha})}\psi_{\varepsilon}(y)dy-
\psi_{\varepsilon}(x)\right|^{2}dx:=M^{(1)}_{n}+M^{(2)}_{n}.
$$
It is clear that
$$
M^{(2)}_n\leq 2\sum\limits_{\alpha\in
\mathcal{A}}\sum\limits_{i=0}^{q_{\alpha}^{n}-1}
\int\limits_{f^i(I^{(n)}_{\alpha})}\left|\frac{1}{|f^i(I^{(n)}_{\alpha})|}
\int\limits_{f^i(I^{(n)}_{\alpha})}\psi_{\varepsilon}(y)dy|^{2}dx+
\int\limits_{f^i(I^{(n)}_{\alpha})}|\psi_{\varepsilon}(x)\right|^{2}dx\leq
4\|\psi_{\varepsilon}\|^{2}_{2}.
$$
By assumption, $\omega_{\varepsilon}$ is uniformly continuous. This means, that for all \, $x, y: |x-y|<\delta$ inequality $|\omega_{\varepsilon}(x)-\omega_{\varepsilon}(y)|<\varepsilon$ is fulfilled. On the other had, for each $f^{i}(I^{(n)}_{\alpha})\in \xi_{n}$, we have \,  $\max\limits_{\alpha,\, i}|f^{i}(I^{(n)}_{\alpha})|\leq \lambda^{n},\,\, \lambda\in (0, 1)$ (see  for instance (\ref{normxinm})). It follows that for all $x,\, y\in f^{i}(I^{(n)}_{\alpha})$, the inequality $|\omega_{\varepsilon}(x)-\omega_{\varepsilon}(y)|<\varepsilon$ is
fulfilled. Then
$$
M^{(1)}_n\leq\sum\limits_{\alpha\in
\mathcal{A}}\sum\limits_{i=0}^{q_{\alpha}^{n}-1}\int\limits_{f^i(I^{(n)}_{\alpha})}
\left|\frac{1}{|f^i(I^{(n)}_{\alpha})|}\displaystyle\int\limits_{f^i(I^{(n)}_{\alpha})}\left(\omega_{\varepsilon}(y)-
\omega_{\varepsilon}(x)\right)dy\right|^{2}dx\leq \varepsilon^{2}.
$$
The estimates for $M^{(1)}_{n}$ and $M^{(2)}_{n}$ imply the assertion of Theorem \ref{martin2}.
\end{proof}

Set \, $\Phi_{0}(x)=\Phi_{0} =\int\limits_{[0, 1]}g(y)dy,\,\, \mbox{\rm for all}\,\, x\in I$. Define \,
$h_{n}:=\Phi_{n}-\Phi_{n-1},\,\, n\geq1$.

\begin{theo}\label{martin3} Let \, $g\in L_{p}(I,\, d\ell),\,\, p>1$. Then

(1) \,\ $g-\Phi_{0}=\sum\limits_{n=1}^{\infty}h_{n}\;\;
(\mbox{\rm in}\; \mathbb{L}_{2}- \mbox{\rm norm});$\\

(2)\,\,\ for any interval $\Delta^{(n-1)}$  of the partition
$\xi_{n-1}$ and for all $n\geq1$, we have
$$\int\limits_{\Delta^{(n-1)}}h_{n}(x)d\ell=0.$$
\end{theo}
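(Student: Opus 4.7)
My plan is to deduce both assertions almost directly from the two preceding theorems (Theorem \ref{martin1} and Theorem \ref{martin2}), reducing the proof to two short arguments: a telescoping identity for part (1) and the defining property of a conditional expectation for part (2).

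For part (1), the key observation is that the partial sums telescope. I would write
$$
\Phi_{0}+\sum_{k=1}^{n}h_{k}=\Phi_{0}+\sum_{k=1}^{n}(\Phi_{k}-\Phi_{k-1})=\Phi_{n},
$$
so that $\sum_{k=1}^{n}h_{k}=\Phi_{n}-\Phi_{0}$ pointwise. Therefore the $L_{2}$-convergence of the series $\sum_{n\geq 1}h_{n}$ to $g-\Phi_{0}$ is equivalent to the statement $\Phi_{n}\to g$ in $\mathbb{L}_{2}$, which is exactly Theorem \ref{martin2} (noting that $g\in\mathbb{L}_{p}(I,d\ell)$ with $p>1$ implies $g\in\mathbb{L}_{2}(I,d\ell)$ on the bounded interval $I$, so the hypotheses of Theorem \ref{martin2} apply). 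Thus (1) follows immediately.

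For part (2), I would appeal to the martingale property established in Theorem \ref{martin1}. Since $\{\Phi_{n}\}_{n\geq 0}$ is a martingale with respect to the filtration generated by $\{\xi_{n}\}$, we have
$$
E(\Phi_{n}\,/\,\xi_{n-1})=\Phi_{n-1},\,\,\,\,\,\, n\geq 1,
$$
and hence
$$
E(h_{n}\,/\,\xi_{n-1})=E(\Phi_{n}\,/\,\xi_{n-1})-\Phi_{n-1}=0.
$$
Given any element $\Delta^{(n-1)}\in\xi_{n-1}$, the definition of conditional expectation with respect to the partition $\xi_{n-1}$ gives
$$
0=E(h_{n}\,/\,\xi_{n-1})\big|_{\Delta^{(n-1)}}=\frac{1}{|\Delta^{(n-1)}|}\int_{\Delta^{(n-1)}}h_{n}(x)\,d\ell,
$$
which yields $\int_{\Delta^{(n-1)}}h_{n}(x)\,d\ell=0$, as required.

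I expect no real obstacles here since both statements are formal consequences of the previously established results: part (1) is just the standard reformulation of martingale convergence as convergence of the martingale-difference series, and part (2) is the standard zero-mean property of martingale differences on the atoms of the coarser $\sigma$-algebra. If I wanted to avoid quoting the martingale property abstractly, I could alternatively prove (2) directly by writing $\Delta^{(n-1)}=\bigcup_{j}\Delta^{(n)}_{j}$ as a finite union of elements of $\xi_{n}$ and computing
$$
\int_{\Delta^{(n-1)}}\Phi_{n}\,d\ell=\sum_{j}|\Delta^{(n)}_{j}|\cdot\frac{1}{|\Delta^{(n)}_{j}|}\int_{\Delta^{(n)}_{j}}g\,dy=\int_{\Delta^{(n-1)}}g\,dy=|\Delta^{(n-1)}|\cdot\Phi_{n-1}\big|_{\Delta^{(n-1)}}=\int_{\Delta^{(n-1)}}\Phi_{n-1}\,d\ell,
$$
which is the same computation as in the proof of Theorem \ref{martin1}, restricted to a single atom.
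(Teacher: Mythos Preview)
Your argument is essentially correct and, for part (2), slightly cleaner than the paper's. The paper proves (1) exactly as you do, by telescoping and invoking Theorem~\ref{martin2}. For (2) the paper does not quote the martingale property from Theorem~\ref{martin1} abstractly; instead it splits into the two cases $\Delta^{(n-1)}\in\xi_n^{pr}$ and $\Delta^{(n-1)}\notin\xi_n^{pr}$ and computes $\int_{\Delta^{(n-1)}}\Phi_n\,d\ell=\int_{\Delta^{(n-1)}}\Phi_{n-1}\,d\ell$ directly in each case. That direct computation is precisely the ``alternative'' you sketch at the end, so you have in fact recorded both routes. Your primary route via $E(h_n\,/\,\xi_{n-1})=0$ is shorter and makes the logical dependence on Theorem~\ref{martin1} transparent; the paper's route avoids invoking conditional expectations a second time but duplicates the calculation already done there.

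One small correction: your parenthetical ``$g\in\mathbb{L}_p(I,d\ell)$ with $p>1$ implies $g\in\mathbb{L}_2(I,d\ell)$ on the bounded interval $I$'' is false for $1<p<2$ (on a finite measure space the inclusion goes the other way, $\mathbb{L}_q\subset\mathbb{L}_p$ for $q\ge p$). Fortunately the remark is unnecessary: the hypothesis of Theorem~\ref{martin2} is exactly $g\in\mathbb{L}_p$, $p>1$, so you can cite it directly without any $\mathbb{L}_2$ membership claim. Simply delete the parenthetical.
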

\begin{proof} Assertion $(1)$ immediately follows from Theorem \ref{martin2}. We'll prove the second assertion. Consider the partition $\xi_{n-1}$. Recall, that $\xi_{n}=\xi^{pr}_{n}\cup \xi^{tn}_{n}$. Let $\Delta^{(n-1)}\in \xi_{n-1}$. If $\Delta^{(n-1)}\in \xi^{pr}_{n}$, then we have
$$
\int\limits_{\Delta^{(n-1)}}h_{n}(x)d\ell=\int\limits_{\Delta^{(n-1)}}\Phi_{n}(x)dx-
\int\limits_{\Delta^{(n-1)}}\Phi_{n-1}(x)dx=0.
$$
Suppose that $\Delta^{(n-1)}\notin \xi^{pr}_{n}$. Let $I^{(n)}_{\alpha}\in \Delta^{(n-1)}$ and $I^{(n)}_{\alpha}\in \xi^{tn}_{n}$. Then we obtain
$$
\int\limits_{\Delta^{(n-1)}}h_{n}(x)d\ell=\int\limits_{\Delta^{(n-1)}}\Phi_{n}(x)dx-
\int\limits_{\Delta^{(n-1)}}\Phi_{n-1}(x)dx=
$$
$$
=\sum\limits_{I^{(n)}_{\alpha}\in \Delta^{(n-1)}}\int\limits_{I^{(n)}_{\alpha}}\left(\frac{1}{|I^{(n)}_{\alpha}|}
\int\limits_{I^{(n)}_{\alpha}}g(y)dy\right)dx-
\int\limits_{\Delta^{(n-1)}}\left(\frac{1}{|\Delta^{(n-1)}|}\int\limits_{\Delta^{(n-1)}}g(y)dy\right)dx=
$$
$$
=\sum\limits_{I^{(n)}_{\alpha}\in \Delta^{(n-1)}}\int\limits_{I^{(n)}_{\alpha}}g(y)dy-
\int\limits_{\Delta^{(n-1)}}g(y)dy=0.
$$
We are done.
\end{proof}

The following theorem plays an important role for our result.

\begin{theo}\label{martin4}(see. \cite{KO1989.2}) Suppose \, $g\in L_{p}(I,\, d\ell),\,\, 1< p\leq 2$. Let $\{(\Phi_{n}(x),  n\geq1\}$ be a $\mathbb{L}_{p}$- bounded martingale w.r.t. the partition $\xi_{n}.$ Then the sequence $\{||h_{n}\|_{p}\,\,, n\geq1\}$ belongs to $l_{2}$.
\end{theo}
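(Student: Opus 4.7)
\smallskip

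\noindent\textbf{Proof proposal.} The plan is to combine the Burkholder square function inequality with Minkowski's integral inequality. The martingale differences are $h_n = \Phi_n - \Phi_{n-1}$, and by hypothesis the martingale is bounded in $\mathbb{L}_p$, i.e.\ $M := \sup_n \|\Phi_n\|_p < \infty$.

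First, I would invoke the Burkholder inequality for martingales in $\mathbb{L}_p$, $1 < p \le 2$: there is a constant $C_p$, depending only on $p$, such that for any $\mathbb{L}_p$-bounded martingale,
$$
\Big\|\bigl(\textstyle\sum_{n\ge 1} h_n^2\bigr)^{1/2}\Big\|_p \;\le\; C_p \sup_{n}\|\Phi_n\|_p \;=\; C_p\, M.
$$
This is the one-sided $L^p$ square function estimate, valid precisely in the range $1 < p \le 2$, and is the substitute for orthogonality of martingale differences that one has for free when $p=2$.

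Second, I would apply Minkowski's integral inequality in the form that compares mixed norms: for any measurable family $\{h_n\}$ and any $0 < p \le 2$,
$$
\Big(\sum_{n\ge 1}\|h_n\|_p^{\,2}\Big)^{1/2}
\;=\; \big\|\,\|h_n(x)\|_{L^p(dx)}\,\big\|_{\ell^2(n)}
\;\le\; \big\|\,\|h_n(x)\|_{\ell^2(n)}\,\big\|_{L^p(dx)}
\;=\; \Big\|\bigl(\textstyle\sum_{n\ge 1} h_n^2\bigr)^{1/2}\Big\|_p.
$$
(The direction of Minkowski's inequality used here is the one where the ``outer'' exponent $2$ is $\ge$ the ``inner'' exponent $p$, so $\|\cdot\|_{L^p(L^2)} \ge \|\cdot\|_{\ell^2(L^p)}$.) Chaining the two estimates yields
$$
\Big(\sum_{n\ge 1}\|h_n\|_p^{\,2}\Big)^{1/2} \;\le\; C_p\, M \;<\;\infty,
$$
which is exactly the statement that $\{\|h_n\|_p\}_{n\ge 1} \in \ell_2$.

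The only genuine input is Burkholder's inequality, which is a classical $\mathbb{L}_p$-theory fact; the remainder of the argument is a clean application of Minkowski. Accordingly, I do not expect any real obstacle, and the proof can be written in a few lines by citing the Burkholder square function estimate (this is precisely the form in which Katznelson and Ornstein use it in \cite{KO1989.2}). The restriction $p \le 2$ in the statement is essential both for the sharp form of the square function bound used and for the direction of Minkowski's inequality; for $p=2$ the result reduces to the orthogonality identity $\sum \|h_n\|_2^2 = \|\Phi_\infty-\Phi_0\|_2^2$.
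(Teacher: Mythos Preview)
Your argument is correct. The paper does not actually supply a proof of this theorem; it simply states the result with a reference to Katznelson--Ornstein \cite{KO1989.2}, so there is no ``paper's own proof'' to compare against. What you have written is a clean and standard justification of the cited fact: Burkholder's square-function inequality gives $\|(\sum_n h_n^2)^{1/2}\|_p \le C_p \sup_n \|\Phi_n\|_p$, and then the mixed-norm Minkowski inequality (with inner exponent $p \le 2$ and outer exponent $2$) converts this into $(\sum_n \|h_n\|_p^2)^{1/2} \le \|(\sum_n h_n^2)^{1/2}\|_p$.

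One minor inaccuracy worth flagging: the Burkholder bound $\|S\|_p \le C_p \sup_n \|\Phi_n\|_p$ is in fact valid for the full range $1 < p < \infty$, not only for $1 < p \le 2$. The restriction $p \le 2$ in the statement is needed solely for the Minkowski step, where the direction of the inequality reverses once $p > 2$. This does not affect the validity of your proof, only the attribution of where the hypothesis $p\le 2$ is used.
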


We need the following lemma which can be checked easily.

\begin{lemm}\label{deltan} Let \, $\{r_{n},\,\ n\geq 1\}\in l_{2}$ be a  sequence of positive numbers
and let \, $\lambda\in (0, 1)$ be a constant. Set \, $\epsilon_n:=\sum\limits_{j=n}^{\infty}\lambda^{j-n}r_j,\,\,\ n\geq1.$ Then $\sum\limits_{n=1}^{\infty}\epsilon^2_{n}<\infty$.
\end{lemm}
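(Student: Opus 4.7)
The plan is to prove this by a standard Cauchy--Schwarz estimate that reduces the claim to summing a geometric series; a slicker, essentially equivalent route is to recognize $\epsilon_n$ as a (one-sided) discrete convolution of $(r_j)$ with the geometric sequence $(\lambda^{k})_{k\geq 0}$ and invoke Young's inequality, but Cauchy--Schwarz avoids any appeal to a named theorem.

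First I would split the weight $\lambda^{j-n}$ inside $\epsilon_n$ as $\lambda^{(j-n)/2}\cdot\lambda^{(j-n)/2}$ and apply Cauchy--Schwarz to obtain
\begin{equation*}
\epsilon_n^2=\Bigl(\sum_{j=n}^{\infty}\lambda^{(j-n)/2}\cdot\lambda^{(j-n)/2}r_j\Bigr)^{\!2}\leq \Bigl(\sum_{j=n}^{\infty}\lambda^{j-n}\Bigr)\Bigl(\sum_{j=n}^{\infty}\lambda^{j-n}r_j^2\Bigr)=\frac{1}{1-\lambda}\sum_{j=n}^{\infty}\lambda^{j-n}r_j^2.
\end{equation*}
The point of this step is that the square gets distributed in a way that keeps the $r_j$'s to the first power in one factor and to the second power (summable by hypothesis) in the other.

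Next I would sum over $n\geq 1$ and swap the order of summation, which is legitimate because all terms are nonnegative:
\begin{equation*}
\sum_{n=1}^{\infty}\epsilon_n^2\leq \frac{1}{1-\lambda}\sum_{n=1}^{\infty}\sum_{j=n}^{\infty}\lambda^{j-n}r_j^2=\frac{1}{1-\lambda}\sum_{j=1}^{\infty}r_j^2\sum_{n=1}^{j}\lambda^{j-n}\leq \frac{1}{(1-\lambda)^2}\sum_{j=1}^{\infty}r_j^2.
\end{equation*}
Since $(r_j)\in l_2$, the right-hand side is finite, proving the claim with the explicit bound $\|\epsilon\|_2\leq (1-\lambda)^{-1}\|r\|_2$.

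There is no real obstacle here: the only thing one needs to be careful about is the correct exponent splitting in Cauchy--Schwarz (so that the geometric tail $\sum_{j\geq n}\lambda^{j-n}=1/(1-\lambda)$ appears cleanly), and the Fubini/Tonelli swap, which is unconditional because the summand is nonnegative. The explicit constant $1/(1-\lambda)^2$ is also useful downstream if one wants a quantitative version of the error bounds $\delta_n$ appearing in Theorems \ref{main1} and \ref{main2}.
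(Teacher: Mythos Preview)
Your proof is correct. The paper does not actually give a proof of this lemma; it simply states that the lemma ``can be checked easily'' and moves on. Your Cauchy--Schwarz argument (equivalently, the $\ell^1\ast\ell^2\subset\ell^2$ Young inequality you mention) is exactly the standard verification the authors had in mind, and the explicit constant $(1-\lambda)^{-1}$ for $\|\epsilon\|_{\ell^2}$ is a nice bonus.
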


As we know, in case of \, KO smoothness, the function $\frac{f''}{f'}$ is defined almost everywhere. Whenever necessary, let's \textit{conditionally} call the derivative $\frac{f''}{f'}$ the \textbf{nonlinearity} of $f$.

Next we define the a sequence of piecewise constant functions for $g=\frac{f''}{f'}$ in a similar way as in (\ref{phin}):
$$
\Phi_{n}(x):=\frac{1}{|\Delta^{(n)}|}
\int\limits_{\Delta^{(n)}}\frac{f''(t)}{f'(t)}dt ,\,\,\ x\in
\Delta^{(n)},\,\,\,\, \alpha\in\mathcal{A}
$$
where $\Delta^{(n)}$ is an interval of the partition $\xi_{n}$. Set $h_n=\Phi_n-\Phi_{n-1}$. Similar to results in Theorem \ref{martin1}, Theorem \ref{martin4} and Lemma \ref{deltan} we obtain the following

\begin{prop}\label{etan} Let $f\in \mathbb{B}^{KO}$ and $\eta_n=\sum\limits_{m=n}^{\infty}\lambda^{m-n}\|h_m\|_p.$
Then $\{\eta_n\}\in l_2$.
\end{prop}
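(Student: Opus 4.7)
The plan is to reduce the statement to a direct application of Theorem \ref{martin4} followed by Lemma \ref{deltan}. The only thing that requires verification is that the hypotheses of Theorem \ref{martin4} are met for $g := f''/f'$, namely that $g\in L_p(I,d\ell)$ for some $p\in(1,2]$ and that the martingale $\{\Phi_n\}$ built from $g$ by formula (\ref{phin}) is $L_p$-bounded.

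First I would check the $L_p$ integrability of $g$. By the definition of $\mathbb{B}^{KO}$, on each closed interval of continuity $\overline{I}_\alpha$ the map $f$ extends to an orientation-preserving diffeomorphism with $f'$ absolutely continuous and $f''\in L_{p_0}(\overline{I}_\alpha,d\ell)$ for some $p_0>1$. Since $f|_{\overline{I}_\alpha}$ is a diffeomorphism of a compact interval, $f'$ is continuous and bounded away from $0$, hence $1/f'$ is bounded. Multiplying by a bounded function preserves $L_{p_0}$, so $f''/f'\in L_{p_0}(I,d\ell)$; because $|I|<\infty$ we may further replace $p_0$ by any $p\in(1,\min\{p_0,2\}]$, which gives the integrability hypothesis of Theorem \ref{martin4}.

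Second, I would verify the $L_p$-boundedness of the martingale $\{\Phi_n\}$. The martingale property itself is Theorem \ref{martin1} applied to this $g$. Boundedness follows from Jensen's inequality applied to the convex function $t\mapsto |t|^p$: on each element $\Delta^{(n)}$ of $\xi_n$,
\[
|\Phi_n(x)|^p = \left|\frac{1}{|\Delta^{(n)}|}\int_{\Delta^{(n)}}g(t)\,dt\right|^p \leq \frac{1}{|\Delta^{(n)}|}\int_{\Delta^{(n)}}|g(t)|^p\,dt,
\]
and integrating this inequality over $I=\bigcup \Delta^{(n)}$ yields $\|\Phi_n\|_p^p\leq \|g\|_p^p$ uniformly in $n$.

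Having these, Theorem \ref{martin4} applies directly and gives $\{\|h_n\|_p\}_{n\geq 1}\in l_2$. Setting $r_n:=\|h_n\|_p$ in Lemma \ref{deltan} produces exactly the quantity $\epsilon_n=\sum_{m=n}^\infty \lambda^{m-n}r_m=\eta_n$, and the lemma's conclusion $\sum_n\epsilon_n^2<\infty$ gives $\{\eta_n\}\in l_2$. I do not expect a genuine obstacle: everything reduces to the two quoted results, and the only mild point is the reduction of the integrability exponent to the range $(1,2]$ needed by Theorem \ref{martin4}, which is immediate since $I$ has finite Lebesgue measure.
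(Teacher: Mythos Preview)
Your proposal is correct and follows exactly the route the paper indicates: the paper states only that the proposition follows ``Similar to results in Theorem \ref{martin1}, Theorem \ref{martin4} and Lemma \ref{deltan}'', and you have supplied precisely those details (integrability of $f''/f'$, $L_p$-boundedness of $\Phi_n$ via Jensen, then the two quoted results). There is nothing to add or correct.
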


\textbf{Bounded geometry or Denjoy type inequalities.} Denote by $\mathbb{B}^{1+bv}$ the set of g.i.e.m $f: I\rightarrow I$ satisfying the conditions $(ii)-(iii)$, which are piecewise $C^1$- smooth and have bounded variation of the first derivative.

From now on we will denote by $C$ constants, which depend only on the original map $f$. Put $x_{i}=f^{i}(x),\,\,\, i\geq0$ \, and $x_0:=x$. The following lemma plays a key role in studying metrical properties of the dynamical partition $\xi_{n}$.

\begin{lemm}\label{DRnf}(see \cite{CS2013}) Let $f\in \mathbb{B}^{1+bv}$. Put \, $\theta:=\mathrm{Var}_{I}\log f'$. Then there is a constant $C>0$ such that
$$
e^{-C\theta}\leq \prod\limits_{i=0}^{q_{\alpha}^{n}-1}Df(x_{i})\leq e^{C\theta},\,\,\, \mbox{\rm for all} \,\,\,\, x\in I^{(n)}_{\alpha}.
$$
\end{lemm}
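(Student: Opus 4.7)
The plan is a Denjoy-type bounded distortion argument combined with a mean value estimate. Passing to logarithms, the claim becomes $\left|\sum_{i=0}^{q_\alpha^n-1}\log Df(x_i)\right|\leq C\theta$, so the task is to control the log-derivative cocycle along the orbit piece of length $q_\alpha^n$ starting from $I_\alpha^{(n)}$.

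First I would set up the classical distortion estimate. For any two points $x,y\in I_\alpha^{(n)}$, the chain rule together with bounded variation of $\log f'$ gives
\[
\left|\log Df^{q_\alpha^n}(x)-\log Df^{q_\alpha^n}(y)\right|\leq\sum_{i=0}^{q_\alpha^n-1}\mathrm{Var}_{f^i(I_\alpha^{(n)})}\log f'\leq\theta.
\]
The last inequality is the heart of the matter: the orbit pieces $\{f^i(I_\alpha^{(n)})\}_{i=0}^{q_\alpha^n-1}$ all belong to the dynamical partition $\xi_n$ of $I$, hence are pairwise disjoint, so the sum of their local variations is dominated by the global total variation $\theta=\mathrm{Var}_I\log f'$.

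Next, applying the mean value theorem to $f^{q_\alpha^n}:I_\alpha^{(n)}\to f^{q_\alpha^n}(I_\alpha^{(n)})$, I can pick $y\in I_\alpha^{(n)}$ with $Df^{q_\alpha^n}(y)=|f^{q_\alpha^n}(I_\alpha^{(n)})|/|I_\alpha^{(n)}|$. Combined with the distortion bound, the lemma reduces to showing that this secant ratio is bounded above and below by constants depending only on $f$. Here the $k$-bounded combinatorics together with the genus-one hypothesis enter: both $\{I_\alpha^{(n)}\}_{\alpha\in\mathcal{A}}$ and its $R^n(f)$-image $\{R^n(f)(I_\alpha^{(n)})\}_{\alpha\in\mathcal{A}}$ are partitions of the same interval $I^{(n)}$ into exactly $d$ subintervals, and $k$-boundedness forces every letter to appear as winner and as loser within each block of $k$ successive renormalization steps, preventing any individual subinterval from being asymptotically negligible relative to $|I^{(n)}|$.

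The main obstacle is this last bounded-geometry step, because establishing $|I_\alpha^{(n)}|\asymp|f^{q_\alpha^n}(I_\alpha^{(n)})|\asymp|I^{(n)}|$ for generalized (non-affine) i.e.m.'s seems to require precisely the distortion control we are trying to prove. I would resolve this by a simultaneous induction on $n$: assume both the distortion bound and a uniform bounded-geometry estimate at level $n$, then use the explicit Rauzy-Veech formulas (\ref{Itype0})--(\ref{Rtype1}) describing the passage from level $n$ to level $n+1$, together with the combinatorial assumption (iii), to propagate both properties to the next level with constants depending only on $f$. Absorbing all the resulting multiplicative constants into a single exponential factor in $\theta$ then yields the desired bounds $e^{-C\theta}\leq\prod_{i=0}^{q_\alpha^n-1}Df(x_i)\leq e^{C\theta}$.
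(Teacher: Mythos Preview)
The paper does not supply its own proof of this lemma; it is stated with a citation to \cite{CS2013}, so there is no in-paper argument to compare against. Your outline is the standard Denjoy-type argument and matches the approach of that reference: bounded variation of $\log f'$ together with pairwise disjointness of the orbit pieces $f^i(I_\alpha^{(n)})$ gives the oscillation bound, the mean value theorem converts one value of $\log Df^{q_\alpha^n}$ into a length ratio, and $k$-bounded combinatorics controls that ratio.

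One clarification on the circularity you flag. Your first step, the distortion estimate
\[
\bigl|\log Df^{q_\alpha^n}(x)-\log Df^{q_\alpha^n}(y)\bigr|\leq\sum_{i=0}^{q_\alpha^n-1}\mathrm{Var}_{f^i(I_\alpha^{(n)})}\log f'\leq\theta,
\]
needs only the disjointness of the intervals $f^i(I_\alpha^{(n)})$ for $0\leq i<q_\alpha^n$, which is immediate from the definition of the first-return time and requires no bounded-geometry input whatsoever. (This is exactly the content of Lemma~\ref{finzi} in the paper, which is proved unconditionally.) Hence there is no genuine loop: the bounded-geometry step may freely invoke this unconditional distortion bound inside its own induction on $n$, combining it with the Rauzy--Veech recursion formulas and the $k$-bounded combinatorics hypothesis to show that no $I_\alpha^{(n)}$ becomes negligible relative to $I^{(n)}$. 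This is how \cite{CS2013} proceeds. Your simultaneous-induction packaging is therefore a mild over-complication rather than an error.
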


Define the norm of the dynamical partition $\xi_n$ by
$$
\|\xi_n\|=\max\{|f^i(i^n_{\alpha})|\},\,\, \mbox{\it where the maximum is
taken for all}\,\,\, \alpha\in \mathcal{A}\,\, \mbox{\it and}\,\,\,\,
0\leq i\leq q_{\alpha}^{n}-1.
$$
Using lemma \ref{DRnf},  it has been shown in \cite{CS2013} that the intervals of the dynamical partition $\xi_n$ have exponentially small length.

\begin{lemm}\label{normxink}(see \cite{CS2013}) Let $f\in \mathbb{B}^{1+bv}$. Then for sufficiently large $n$ there is $\lambda\in (0, 1)$ such that \ $\|\xi_{n+k}\|\leq \lambda\|\xi_n\|.$
\end{lemm}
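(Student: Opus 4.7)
The plan is to combine the distortion control of Lemma \ref{DRnf} with the $k$-bounded combinatorics in order to show that every element of $\xi_n$ is split by $\xi_{n+k_0}$ into pieces whose lengths are a uniform fraction smaller. Since $\xi_{n+k_0}$ refines $\xi_n$, any element $J = f^i(I^{(n)}_\alpha) \in \xi_n$ is a disjoint union of elements of $\xi_{n+k_0}$; it therefore suffices to show that each of these sub-intervals carries a controllably smaller proportion of $|J|$, uniformly in $n$ and the particular element chosen.

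First, I would establish a shrinkage estimate at the base level: there exist an integer $k_0 \ge 1$ and a constant $\delta \in (0,1)$, depending only on $k$ and $\#\mathcal{A}$, such that $|I^{(n+k_0)}| \le (1-\delta)\,|I^{(n)}|$ for every $n$. By the $k$-bounded combinatorics assumption, inside any window $[n, n+k_0]$ every letter acts as a loser at some step, and the corresponding translated sub-interval is removed from the current base interval. The chain condition built into the definition of bounded combinatorics, combined with Lemma \ref{DRnf} applied along first-return orbits, lets one compare the length of a loser interval at step $n+j$ with a definite fraction of $|I^{(n+j)}|$; summing these contributions over $j \in \{0,\dots,k_0-1\}$ yields the uniform $\delta$. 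Second, for an arbitrary $J = f^i(I^{(n)}_\alpha) \in \xi_n$ with $0 \le i < q^n_\alpha$, Lemma \ref{DRnf} gives bounded distortion of $Df^i$ on $I^{(n)}_\alpha$ and on any sub-interval thereof. Consequently, for each element $J' = f^{i'}(I^{(n+k_0)}_\beta) \in \xi_{n+k_0}$ with $J' \subset J$, the ratio $|J'|/|J|$ is comparable, up to a bounded multiplicative constant $e^{2C\theta}$, to the ratio $|I^{(n+k_0)}_\beta|/|I^{(n)}_\alpha|$ of the corresponding sub-intervals of $I^{(n)}$. Applying the base-level shrinkage estimate to each such sub-interval, and enlarging $k_0$ if necessary to absorb the distortion constant, one obtains $|J'| \le \lambda |J|$ with some $\lambda \in (0,1)$ independent of $n$, $\alpha$, $i$, and the choice of $J'$.

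The main obstacle is the first step, namely producing the uniform lower bound $\delta$ on the proportional shrinkage of the base interval over a bounded number of renormalization steps. This requires carefully exploiting the chain condition in the bounded combinatorics definition to rule out that the losing sub-intervals over the window $[n,n+k_0]$ are all arbitrarily small compared with $I^{(n)}$; the essential technical tool is again Lemma \ref{DRnf}, which allows one to transfer length comparisons from fundamental intervals of one level to their iterates, and hence between losers and winners at nearby levels. Once $\delta$ is secured, propagating the estimate from the base intervals to the full dynamical partition via distortion and the refinement structure $\xi_{n+k_0} \supset \xi_n$ is routine.
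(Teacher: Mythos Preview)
The paper does not give its own proof of this lemma: it is stated with the attribution ``(see \cite{CS2013})'' and no argument is supplied. So there is no in-paper proof to compare your proposal against; what one can do is check your sketch against the standard argument that \cite{CS2013} carries out, and your outline is indeed along those lines (distortion control via Lemma~\ref{DRnf} plus $k$-bounded combinatorics).

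That said, there is a genuine gap in how your Step~1 feeds into Step~2. In Step~1 you establish $|I^{(n+k_0)}|\le(1-\delta)|I^{(n)}|$, a shrinkage of the \emph{total} base interval. In Step~2 you claim that for $J'=f^{i'}(I^{(n+k_0)}_\beta)\subset J=f^i(I^{(n)}_\alpha)$ the ratio $|J'|/|J|$ is comparable to $|I^{(n+k_0)}_\beta|/|I^{(n)}_\alpha|$, and then invoke the base-level shrinkage. Two issues: first, distortion of $f^i$ on $I^{(n)}_\alpha$ compares $|J'|/|J|$ to $|f^{i'-i}(I^{(n+k_0)}_\beta)|/|I^{(n)}_\alpha|$, not to $|I^{(n+k_0)}_\beta|/|I^{(n)}_\alpha|$ (this is easily repaired by a second application of Lemma~\ref{DRnf} to $f^{i'-i}$). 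Second, and more substantively, the inequality $|I^{(n+k_0)}|\le(1-\delta)|I^{(n)}|$ does \emph{not} by itself bound $|I^{(n+k_0)}_\beta|/|I^{(n)}_\alpha|$ away from~$1$: you need in addition that all fundamental intervals at a given level are uniformly comparable, i.e.\ $|I^{(n)}_\alpha|\asymp|I^{(n)}_\gamma|$ for all $\alpha,\gamma\in\mathcal{A}$ with constants independent of $n$. This ``balance'' property is exactly what the chain condition in the $k$-bounded combinatorics, together with Lemma~\ref{DRnf}, is designed to produce, and it is the real content of the argument in \cite{CS2013}. You allude to this when discussing the loser intervals in Step~1, but you never isolate it as the statement you actually need; once you have balance, both your steps combine cleanly and the conclusion follows after possibly enlarging $k_0$.
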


The following corollary follows from Lemma \ref{normxink}.

\begin{cor}\label{normxinm} Let $f\in \mathbb{B}^{1+bv}$. Then for sufficiently large $n$ and $m$ with
\, $m-n>k$, there is $\lambda\in (0, 1)$ such that
\begin{equation}\label{normxinm1}
\|\xi_{n}\|\leq \lambda^{\frac{n}{k}-1}\,\,\, \mbox{\it and}
\,\,\,\, \|\xi_{m}\|\leq \lambda^{\frac{m-n}{k}-1}\|\xi_{n}\|.
\end{equation}
\end{cor}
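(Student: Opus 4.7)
The plan is to iterate Lemma \ref{normxink} and to exploit the fact that $\xi_{n+1}$ is a refinement of $\xi_n$. Since every element of $\xi_{n+1}$ is contained in an element of $\xi_n$, the sequence $\|\xi_n\|$ is non-increasing, which allows the step-$k$ contraction furnished by Lemma \ref{normxink} to be upgraded to a bound for arbitrary indices.

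For the second inequality, I would fix $m > n$ with $m - n > k$, where $n$ is beyond the threshold past which Lemma \ref{normxink} holds. Writing $m - n = jk + r$ with $j = \lfloor (m-n)/k \rfloor \geq 1$ and $0 \leq r < k$, I would apply Lemma \ref{normxink} a total of $j$ times starting from index $n$ and then use monotonicity to drop the residue $r$:
$$\|\xi_m\| \leq \|\xi_{n+jk}\| \leq \lambda^{j}\|\xi_n\|.$$
Since $j \geq (m-n)/k - 1$ and $\lambda \in (0,1)$, this is at most $\lambda^{(m-n)/k-1}\|\xi_n\|$, giving the desired bound.

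For the first inequality, I would apply the just-established bound with a fixed $n_0$ lying beyond the threshold of Lemma \ref{normxink}, keeping $\|\xi_{n_0}\|$ bounded by $|I|=1$. This yields, for $n > n_0 + k$,
$$\|\xi_n\| \leq \lambda^{(n-n_0)/k - 1}\|\xi_{n_0}\| \leq C\,\lambda^{n/k},$$
with a constant $C = \lambda^{-n_0/k-1}\|\xi_{n_0}\|$ depending only on $n_0$ and on the original $\lambda$. To absorb $C$ I would slightly enlarge $\lambda$ to some $\tilde\lambda \in (\lambda,1)$: because $(\tilde\lambda/\lambda)^{n/k}\to\infty$, the inequality $C\lambda^{n/k}\leq \tilde\lambda^{n/k-1}$ holds for all sufficiently large $n$, and renaming $\tilde\lambda$ back to $\lambda$ produces $\|\xi_n\|\leq \lambda^{n/k-1}$.

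No serious obstacle is anticipated, since the genuine analytic content has been packaged into Lemma \ref{normxink}. The only delicate point is the absorption of the prefactor $C$ into the exponential rate, which is handled by the standard device of sacrificing a small amount of contraction (passing from $\lambda$ to $\tilde\lambda$) in exchange for a clean exponential bound valid for all large $n$.
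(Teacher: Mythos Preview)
Your argument is correct and is precisely the intended derivation: the paper states this corollary without proof, simply noting that it follows from Lemma~\ref{normxink}, and your iteration of that lemma together with the monotonicity of $\|\xi_n\|$ (coming from $\xi_{n+1}\supset\xi_n$) is exactly the standard way to unpack such a statement. The absorption of the prefactor via a slight enlargement of $\lambda$ is also the expected maneuver.
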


Consider the sequence of dynamical partitions $\xi_n$. We recall the following definition introduced in \cite{KO1989.1}.

\begin{defi}\label{qnsmall} An interval $J=(a, b)\subset[0, 1]$ is called {\bf $q_n$-small} and its end points $a, b$ are {\bf $q_n$-close}, if the system of intervals $f^i(J),\,\, 0\leq i\leq q_n$ are disjoint.
\end{defi}

The following lemmas are modification of similar ones used in \cite{KO1989.1} and \cite{KO1989.2} for circle maps.

\begin{lemm}\label{ellsum} Suppose that $f\in \mathbb{B}^{1+bv}$. Let $I^{(n)}_{\alpha}$ be $q_n$- small and
$m<n-k$, then
$$
\ell\left(\bigcup\limits_{i=0}^{q_{m+1}-1}f^i(I^{(n)}_{\alpha})\right)\leq
C\lambda_{1}^{n-m},\,\,\, \mbox{\rm where} \,\,\, \lambda_{1}= \lambda^{1/k}.
$$
\end{lemm}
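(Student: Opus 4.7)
The plan is to exploit the $q_n$-smallness of $I^{(n)}_{\alpha}$ --- which for $m<n-k$ forces the relevant iterates to be pairwise disjoint --- together with the Denjoy-type distortion control from Lemma~\ref{DRnf} and the geometric decay of $\|\xi_{\cdot}\|$ from Corollary~\ref{normxinm}. Since $q_{m+1}\leq q_n$, the $q_n$-smallness hypothesis yields that $\{f^{i}(I^{(n)}_{\alpha})\}_{i=0}^{q_{m+1}-1}$ are mutually disjoint, so the measure of their union equals $\sum_{i=0}^{q_{m+1}-1}|f^{i}(I^{(n)}_{\alpha})|$ and it suffices to bound this sum by $C\lambda_{1}^{n-m}$.

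Next I would decompose the orbit into excursions between successive returns to $I^{(m)}$. Writing $I^{(n)}_{\alpha}\subset I^{(m)}_{\beta_{0}}$ (a unique address exists, since $I^{(n)}\subset I^{(m)}$ and the partitions are compatible), the first excursion is $\{f^{i}(I^{(n)}_{\alpha})\subset f^{i}(I^{(m)}_{\beta_{0}})\}_{0\leq i<q^{m}_{\beta_{0}}}$; at time $t_{1}=q^{m}_{\beta_{0}}$ the orbit re-enters $I^{(m)}$ through some $I^{(m)}_{\beta_{1}}$, and one iterates. By the $k$-bounded combinatorics assumption, the number $N$ of excursions completed before time $q_{m+1}$ is uniformly bounded, independently of $n$ and $m$. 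Within the $j$-th excursion, Lemma~\ref{DRnf} applied to the iterates $f^{i}|_{I^{(m)}_{\beta_{j}}}$, $0\leq i<q^{m}_{\beta_{j}}$, combined with the mean value theorem gives
\begin{equation*}
\frac{|f^{t_{j}+i}(I^{(n)}_{\alpha})|}{|f^{i}(I^{(m)}_{\beta_{j}})|}\;\leq\; C\,\frac{|f^{t_{j}}(I^{(n)}_{\alpha})|}{|I^{(m)}_{\beta_{j}}|}.
\end{equation*}
Summing in $i$ and noting that the intervals $\{f^{i}(I^{(m)}_{\beta_{j}})\}_{i=0}^{q^{m}_{\beta_{j}}-1}$ are disjoint elements of $\xi_{m}$ (total length $\leq 1$), each excursion contributes at most $C\,|f^{t_{j}}(I^{(n)}_{\alpha})|/|I^{(m)}_{\beta_{j}}|$.

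It remains to bound each ratio $|f^{t_{j}}(I^{(n)}_{\alpha})|/|I^{(m)}_{\beta_{j}}|$ by $C\lambda_{1}^{n-m}$. I would do this by applying bounded distortion to the successive return maps so as to compare the ratio $|f^{t_{j}}(I^{(n)}_{\alpha})|/|I^{(m)}_{\beta_{j}}|$ to the initial ratio $|I^{(n)}_{\alpha}|/|I^{(m)}_{\beta_{0}}|$; then, by bounded geometry of $\xi_{m}$ (all elements comparable to $\|\xi_{m}\|$ up to a multiplicative constant, via $k$-bounded combinatorics plus Lemma~\ref{DRnf}), this is dominated by $C\,\|\xi_{n}\|/\|\xi_{m}\|\leq C\lambda_{1}^{n-m}$ by Corollary~\ref{normxinm}. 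Summing over the $N=N(k)$ excursions then yields the claim. The main obstacle I expect is precisely this last step: bounded geometry of $\xi_{m}$ (uniform comparability of all its elements to $\|\xi_{m}\|$) and the corresponding bounded distortion of the iterated return maps, which must be established as a consequence of the $k$-bounded combinatorics and Lemma~\ref{DRnf}; once in hand, the summation that finishes the proof is routine.
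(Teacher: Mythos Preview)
Your approach is correct but considerably more elaborate than necessary. The paper's argument is essentially three lines: choose the element $I^{(m+1)}_{\beta}$ of the partition at level $m+1$ containing $I^{(n)}_{\alpha}$; then for each $0\le i<q_{m+1}$ the iterate $f^{i}(I^{(n)}_{\alpha})$ sits inside $f^{i}(I^{(m+1)}_{\beta})$, and bounded distortion (Lemma~\ref{finzi}) together with the second inequality of Corollary~\ref{normxinm} yields $|f^{i}(I^{(n)}_{\alpha})|\le C\lambda_{1}^{n-m}|f^{i}(I^{(m+1)}_{\beta})|$; summing over $i$ and using that the intervals $f^{i}(I^{(m+1)}_{\beta})$ are pairwise disjoint (hence total length $\le 1$) finishes the proof. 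Your excursion decomposition into successive returns to $I^{(m)}$, and the bound on the number of excursions via $k$-bounded combinatorics, are not needed: a \emph{single} containing interval at level $m+1$ already has the right return time and the disjointness you want, so there is nothing to patch together.

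That said, you have put your finger on exactly the right subtlety. The ratio estimate $|I^{(n)}_{\alpha}|/|I^{(m+1)}_{\beta}|\le C\lambda_{1}^{n-m}$ (before applying $f^{i}$) does require the bounded-geometry fact that fundamental segments at a given level are uniformly comparable to one another, equivalently to $\|\xi_{\cdot}\|$. The paper invokes Corollary~\ref{normxinm} at this step without spelling out the comparability; it is a consequence of $k$-bounded combinatorics and Lemma~\ref{DRnf}, proved in \cite{CS2013}. So your diagnosis of where the real content lies is sound---only the packaging can be much lighter.
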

\begin{proof} Let $I^{(m+1)}_{\beta}$ be $q_{m+1}$- small and assume that it contains the interval $I^{(n)}_{\alpha}$. The second inequality in (\ref{normxinm1}) implies:  $$|f^{i}(I^{(n)}_{\alpha})|\leq
C\lambda_{1}^{n-m}|f^{i}(I^{(m)}_{\beta})|,\,\,\, 0\leq i\leq q_{m+1}.$$
Then, we get
$$
\ell\left(\bigcup\limits_{i=0}^{q_{m+1}-1}f^i(I^{(n)}_{\alpha})\right)=
\sum\limits_{i=0}^{q_{m+1}-1}|f^i(I^{(n)}_{\alpha}))|\leq
C\lambda_{1}^{n-m}\sum\limits_{i=0}^{q_{m+1}-1}|f^i(I^{(m)}_{\beta}))|\leq
C\lambda_{1}^{n-m}.
$$
\end{proof}

Put $Df^i(t):=(f^i)'(t)$.

\begin{lemm}\label{finzi} Suppose that $f\in \mathbb{B}^{1+bv}$. Let $x$  and $y$ be $q_n$- close. Then for
any $0\leq l\leq q_n-1$ the following inequality holds:
$$
e^{-\theta} \leq \frac{Df^{l}(x)}{Df^{l}(y)}\leq e^{\theta}.
$$
\end{lemm}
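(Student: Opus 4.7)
The plan is to reduce the multiplicative estimate to an additive one by taking logarithms, and then to control the resulting sum by exploiting the disjointness of the forward iterates of $[x,y]$ guaranteed by the $q_n$-closeness assumption, together with the fact that $\mathrm{Var}_I \log f' = \theta < \infty$ (including the jumps of $\log f'$ at the break points).

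First I would write
\[
\log \frac{Df^{l}(x)}{Df^{l}(y)} = \sum_{i=0}^{l-1}\bigl(\log Df(f^{i}(x)) - \log Df(f^{i}(y))\bigr),
\]
by the chain rule. Each summand is bounded in absolute value by the total variation of $\log f'$ on the closed interval joining $f^{i}(x)$ and $f^{i}(y)$; note that this variation takes into account both the continuous oscillation of $\log f'$ and the jumps at any break points that happen to lie in $f^{i}([x,y])$, so the bound is valid even though $f'$ is only piecewise continuous.

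Next I would invoke the hypothesis that $x$ and $y$ are $q_n$-close, so that the iterates $f^{i}((x,y))$ for $0\le i\le q_n$ are pairwise disjoint. Since $l\le q_n-1$, the closed intervals $[f^{i}(x), f^{i}(y)]$ for $0\le i\le l-1$ have pairwise disjoint interiors and all lie inside $I=[0,1)$. Because variation is additive under concatenation along disjoint subintervals (and bounded by the total variation on a larger set), summing over $i$ gives
\[
\sum_{i=0}^{l-1}\mathrm{Var}_{[f^{i}(x), f^{i}(y)]}\log f' \;\le\; \mathrm{Var}_{I}\log f' \;=\;\theta.
\]
Combining the two displays yields $\bigl|\log \frac{Df^{l}(x)}{Df^{l}(y)}\bigr|\le \theta$, which is equivalent to the two-sided bound in the statement.

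The step most deserving of care is the bookkeeping of the break points: because $f\in\mathbb{B}^{1+bv}$ only has piecewise $C^1$ smoothness, one must read $\mathrm{Var}_I\log f'$ in the sense that includes the jumps of $\log f'$, and one must check that a forward iterate $f^i([x,y])$ containing a break point does not invalidate either the distortion inequality on that single subinterval (it does not, because the relevant bound is in terms of variation, not of a derivative estimate) or the disjointness argument (it does not, since disjointness is an assumption on open sets and the endpoint-touching at break points contributes no overlap in the variation sum).
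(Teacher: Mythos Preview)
Your argument is correct and coincides with the paper's own proof: both use the chain rule to write $\log\bigl(Df^{l}(x)/Df^{l}(y)\bigr)$ as a telescoping sum, bound each term by the variation of $\log f'$ over $f^{i}([x,y])$, and then use the disjointness of these iterates (from the $q_n$-closeness hypothesis) to conclude that the total is at most $\theta=\mathrm{Var}_I\log f'$. Your added remarks about how the jumps at break points are absorbed into the total variation are a welcome clarification but do not constitute a different method.
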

\begin{proof} Take any two $q_{n}$-close points $x, y\in [0, 1]$ and $0\le m\le q_{n}-1$. Denote by $I^{(n)}_{\alpha}$ the open interval with endpoints $x$ and $y$. Since the intervals $f^{i}(I^{(n)}_{\alpha}),\ 0\leq i<q_{n}$ are disjoint, we obtain
$$
|\log Df^{m}(x)-\log Df^{m}(y)|\le \sum_{s=0}^{q_{n}-1}| \log
Df(f^{s}(x))- \log Df(f^{s}(y))|\le \theta.
$$
From this, we obtain the result.
\end{proof}

Consider an arbitrary fundamental segment $I^{(n)}_{\alpha}$ of the $n$-th basic partition $\mathcal{P}^{(n)}$.
Put $I^{(n)}_{\alpha}=[a, b]$. For each $0\leq i\leq q_{\alpha}^{n}$, we introduce the relative
coordinates  $z_i: [f^i(a), f^i(b)]\rightarrow [0, 1]$ as:
\begin{equation}\label{zi}
z_{i}:=\frac{f^{i}(x)-f^{i}(a)}{f^{i}(b)-f^{i}(a)},\,\,\,\,\, x\in
[a, b].
\end{equation}

We consider the relative coordinates $z_i$ as functions of the variable $z_0$.

\begin{lemm}\label{propzi} Suppose that $f\in \mathbb{B}^{KO}$. Then for all \, $i=0,1,...,(q_{\alpha}^n-1)$
the following inequalities hold:
\begin{equation}\label{propzi1}
e^{-2\theta}\leq \frac{z_0(1-z_0)}{z_i(1-z_i)}\leq
e^{2\theta},\,\,\, e^{-\theta}\leq \frac{dz_i}{dz_0}\leq
e^{\theta},\,\,\, \int\limits_{0}^{1}\left|\frac{d^2z_i}{dz_0^2}\right|dz_0\leq
C \left\|\frac{f''}{f'}\right\|_1.
\end{equation}
\end{lemm}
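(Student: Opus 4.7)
The plan is to handle the three inequalities in turn, using bounded distortion (Lemma \ref{finzi}) together with the mean value theorem, and exploiting the fact that the iterates $f^j(I^{(n)}_{\alpha})$ for $0\le j\le q^n_\alpha-1$ are pairwise disjoint (so $I^{(n)}_\alpha=[a,b]$ is $q^n_\alpha$-small in the sense of Definition \ref{qnsmall}). First, for the middle inequality, direct differentiation via $x=a+(b-a)z_0$ gives $dz_i/dz_0=(b-a)Df^i(x)/(f^i(b)-f^i(a))$. Writing $f^i(b)-f^i(a)=Df^i(\xi)(b-a)$ by the MVT for some $\xi\in[a,b]$, this becomes $Df^i(x)/Df^i(\xi)$, and since $x,\xi$ are $q^n_\alpha$-close, Lemma \ref{finzi} immediately gives the bound $e^{\pm\theta}$.

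For the first inequality, I would rewrite $z_0(1-z_0)=(x-a)(b-x)/(b-a)^2$ and $z_i(1-z_i)=(f^i(x)-f^i(a))(f^i(b)-f^i(x))/(f^i(b)-f^i(a))^2$. Applying the MVT three times (to $f^i(x)-f^i(a)$, $f^i(b)-f^i(x)$, and $f^i(b)-f^i(a)$) produces mean values $\xi_1,\xi_2,\xi_3\in[a,b]$ and expresses $z_0(1-z_0)/z_i(1-z_i)$ as $Df^i(\xi_3)^2/(Df^i(\xi_1)Df^i(\xi_2))$. Two applications of Lemma \ref{finzi} to the ratios $Df^i(\xi_3)/Df^i(\xi_k)$ give the bound $e^{\pm 2\theta}$.

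The third inequality is where the real work lies. Differentiating twice gives $d^2z_i/dz_0^2=(b-a)^2 D^2 f^i(x)/(f^i(b)-f^i(a))$, so after converting $dz_0=dx/(b-a)$,
$$\int_0^1\Bigl|\frac{d^2z_i}{dz_0^2}\Bigr|dz_0=\frac{b-a}{f^i(b)-f^i(a)}\int_a^b|D^2f^i(x)|\,dx.$$
The key identity is the logarithmic-derivative expansion $D^2f^i(x)/Df^i(x)=\sum_{j=0}^{i-1}(f''/f')(f^j(x))\cdot Df^j(x)$, obtained by differentiating $\log Df^i=\sum_{j}\log f'\circ f^j$. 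After the triangle inequality, I would change variables $y=f^j(x)$ (so that $Df^j(x)\,dx=dy$) in the $j$-th term, reducing the summand to $\int_{f^j(a)}^{f^j(b)}Df^i(x(y))\,|f''(y)/f'(y)|\,dy$. Lemma \ref{finzi} together with the MVT gives the uniform distortion bound $Df^i(x(y))\le e^{\theta}(f^i(b)-f^i(a))/(b-a)$, so the prefactor $(b-a)/(f^i(b)-f^i(a))$ cancels and one arrives at
$$\int_0^1\Bigl|\frac{d^2z_i}{dz_0^2}\Bigr|dz_0\le e^{\theta}\sum_{j=0}^{i-1}\int_{f^j(a)}^{f^j(b)}\Bigl|\frac{f''(y)}{f'(y)}\Bigr|dy.$$
The decisive step is the disjointness of the intervals $\{f^j([a,b])\}_{0\le j\le i-1}$, which collapses the sum into a single integral over $[0,1]$, bounded by $\|f''/f'\|_1$. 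This yields the claim with $C=e^{\theta}$.

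The main obstacle, as expected, is the third bound — specifically keeping track of the chain-rule expansion and performing the change of variables in a way that leaves integrals of $|f''/f'|$ on disjoint pieces of $I$. The distortion estimates are routine consequences of Lemma \ref{finzi}, but care is needed so that the cancellation of $(b-a)/(f^i(b)-f^i(a))$ against the distortion bound on $Df^i$ is clean, and so that the disjointness argument applies to exactly the right intervals.
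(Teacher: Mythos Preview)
Your proof is correct and follows essentially the same route as the paper's: the first two inequalities are handled identically via the mean value theorem and Lemma~\ref{finzi}, and for the third you expand $D^2f^i/Df^i$ as a sum over $j$, change variables $y=f^j(x)$, apply the distortion bound, and use disjointness of the orbit intervals---which is exactly what the paper does (it writes the same computation in the $dz_j/dz_0$ notation rather than via $D^2f^i$ directly). Your bookkeeping is in fact slightly tighter, yielding $C=e^{\theta}$ where the paper records $e^{3\theta}$.
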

\begin{proof}
Using (\ref{zi}) we get
$$
\frac{z_0(1-z_0)}{z_i(1-z_i)}=\frac{x-a}{f^i(x)-f^i(a)}\cdot
\frac{b-x}{f^i(b)-f^i(x)}\cdot
\left(\frac{f^i(b)-f^i(a)}{b-a}\right)^2=
\frac{Df^i(t_0)}{Df^i(t_1)}\cdot \frac{Df^i(t_0)}{Df^i(t_2)},
$$
where \, $t_0\in[a, b],\,\, t_1\in [a, x],\,\, t_2\in [x, b]$. Note that both of the pairs $\{t_0,\, t_1\}$ and $\{t_0,\, t_2\}$ are $q_n$-close. Applying Lemma \ref{finzi}, we obtain the first inequality in
(\ref{propzi1}).

Using (\ref{zi}) we find for $\dfrac{dz_i}{dz_0}$:
$$
\frac{dz_i}{dz_0}=\frac{dz_i}{dx_i}\cdot \frac{dx_i}{dx}\cdot
\frac{dx}{dz_0}=\frac{|I^{(n)}_{\alpha}|}{|I^{(n)}_{\alpha,
i}|}\cdot Df^i(x)=\frac{Df^i(x)}{Df^i(t_0)},\,\,\, \mbox{\rm
where}\,\,\,\, t_0\in I^{(n)}_{\alpha}.
$$
Then due to Lemma \ref{finzi}, we get the second inequality in (\ref{propzi1}).

Note, that the functions $\frac{d^2z_i}{dz_0^2}$ are defined almost everywhere. We can estimate the functions $\frac{d^2z_i}{dz_0^2}$ in the integral norm. According to the relations $x=a+z_0(b-a)$ and $x_i=f^i(x)$, we find for $\frac{d^2z_i}{dz_0^2}$:
$$
\frac{d^2z_i}{dz_0^2}=\frac{d}{dx_i}\left(\frac{|I^{(n)}_{\alpha}|}{|I^{(n)}_{\alpha,\,
i}|}\cdot
\prod\limits_{j=0}^{i-1}f'(x_j)\right)\cdot\frac{dx_i}{dx}\cdot\frac{dx}{dz_0}=
\frac{|I^{(n)}_{\alpha}|^2}{|I^{(n)}_{\alpha,\, i}|}\cdot
Df^i(x)\cdot \sum\limits_{j=0}^{i-1}\frac{f''(x_j)}{f'(x_j)}\cdot
Df^j(x)=
$$
$$
=\frac{dz_i}{dz_0}\cdot\left(\sum\limits_{j=0}^{i-1}\frac{f''(x_j)}{f'(x_j)}\cdot
Df^j(x)\right)\cdot |I^{(n)}_{\alpha}|=\frac{dz_i}{dz_0}\cdot\left(\sum\limits_{j=0}^{i-1}\frac{f''(x_j)}{f'(x_j)}\cdot
|I^{(n)}_{\alpha,\, j}|\cdot \frac{dz_j}{dz_0}\cdot\right).
$$
This together with the first and second inequalities in (\ref{propzi1}) imply
$$
\int\limits_{0}^{1}\left|\frac{d^2z_i}{dz_0^2}\right|dz_0\leq
e^{3\theta}\int\limits_{0}^{1}\left(\sum\limits_{j=0}^{i-1}\left|\frac{f''(x_j)}{f'(x_j)}\right|\cdot
|I^{(n)}_{\alpha, j}|\right)dz_i.
$$
Substituting  $z_i=\dfrac{x_i-a_i}{b_i-a_i}$ in the last integral, we obtain
$$
\int\limits_{0}^{1}\left|\frac{d^2z_i}{dz_0^2}\right|dz_0\leq
C\sum\limits_{j=0}^{q_{\alpha}^{n}-1}\int\limits_{a_j}^{b_j}|f''(x_j)|dx_j
\leq C \left\|\frac{f''}{f'}\right\|_1,
$$
as we claimed.
\end{proof}

\section{Approximations of the nonlinearity for $\mathbb{B}^{KO}$ maps with a martingale}

In the low smoothness case considered here, we still have not known, how to obtain the necessary bounds for the integral of $\frac{f''}{f'}$ on any interval of the dynamical partition. For this reason we had to consider the sum of these integrals over all the intervals of dynamical partition.

Let $I^{(n)}_{\alpha}$ be an arbitrary fundamental segment of the $n$-th basic partition $\mathcal{P}^{(n)}$. Let $I^{(n)}_{\alpha}=[a, b]$. For the iteration of the interval $I^{(n)}_{\alpha}$ and its endpoints we
use the following notations:
$$
I^{(n)}_{\alpha, i}=f^{i}(I^{(n)}_{\alpha})=[a_i,
b_i],\,\, 0\leq i\leq q_{\alpha}^{n}-1,
$$
where $a_0=a,\,\, b_0=b$ and
$$
a_{i}=f^{i}(a),\,\, b_{i}=f^{i}(b),\,\,\, x_{i}=f^{i}(x)\in[a_i,
b_i].
$$
For simplicity of the notation put $q_{n}:=q_{\alpha}^{n}$. Next define
$$
S^{(1)}_{n}:=\sum\limits_{i=0}^{q_{n}-1}\int\limits_{a_i}^{x_i}\frac{f^{\prime
\prime}(t)}{f^{\prime}(t)}\left(\frac{t-a_i}{x_i-a_i}-\frac{1}{2}\right)dt.
$$

\begin{prop}\label{modSn1}
Let $f\in \mathbb{B}^{KO}$. Then we have $|S^{(1)}_{n}|=\mathcal{O}(\lambda^{n}+\eta_n)$, where $\lambda\in(0, 1)$ and $\eta_{n}\in l_{2}$ is from Proposition \ref{etan}.
\end{prop}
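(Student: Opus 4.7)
The plan is to combine the mean-zero property of the weight
$w_i(t):=\tfrac{t-a_i}{x_i-a_i}-\tfrac12$ over $[a_i,x_i]$ with the martingale decomposition of the nonlinearity $f''/f'$. Define the piecewise weight $W:I\to\mathbb{R}$ by $W(t)=w_i(t)$ if $t\in[a_i,x_i]$ for some $0\le i<q_n$ and $W(t)=0$ otherwise; note $|W|\le 1/2$ and $S^{(1)}_n=\int_I (f''/f')\,W\,dt$. Since $\Phi_n$ is constant on each $[a_j,b_j]\in\xi_n$ while $\int_{[a_j,b_j]}W\,dt=\int_{a_j}^{x_j}w_j\,dt=0$, we have $\int_I\Phi_n\,W\,dt=0$. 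Using $f''/f'-\Phi_n=\sum_{m>n}h_m$ (valid in $L_p$),
\[
S^{(1)}_n=\sum_{m=n+1}^{\infty}\int_I h_m\,W\,dt,
\]
the exchange of sum and integral being justified a posteriori by the absolute bound below.

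For each $m>n$, set $\widetilde{W}_m:=E(W/\xi_{m-1})$. By Theorem~\ref{martin3}(2), $\int_\Delta h_m\,dt=0$ for every $\Delta\in\xi_{m-1}$, and since $\widetilde{W}_m$ is constant on every such $\Delta$, $\int_I h_m\widetilde{W}_m\,dt=0$. Hence, with $1/p+1/q=1$,
\[
\Bigl|\int_I h_m\,W\,dt\Bigr|=\Bigl|\int_I h_m(W-\widetilde{W}_m)\,dt\Bigr|\le \|h_m\|_p\,\|W-\widetilde{W}_m\|_q.
\]
Since $\xi_{m-1}$ refines $\xi_n$, every $\Delta\in\xi_{m-1}$ intersecting $\mathrm{supp}\,W$ lies inside some $[a_j,b_j]$; if $\Delta\subset[a_j,x_j]$ then $W$ is affine on $\Delta$ with slope $(x_j-a_j)^{-1}$, giving $\int_\Delta|W-\overline{W}_\Delta|\,dt\le|\Delta|^2/(x_j-a_j)$, whose sum over all such $\Delta$ is at most $\|\xi_{m-1}\|$. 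The unique $\Delta\in\xi_{m-1}$ straddling the jump point $x_j$ contributes $\le|\Delta|\le\|\xi_{m-1}\|$, while all other $\Delta$ give $W|_\Delta=0$. Summing over $j=0,\dots,q_n-1$ yields $\|W-\widetilde{W}_m\|_1\le C\,q_n\|\xi_{m-1}\|$. The bounded geometry of $\xi_n$ (a consequence of $k$-bounded combinatorics) provides $q_n\|\xi_n\|\le C$, and Corollary~\ref{normxinm} then yields $q_n\|\xi_{m-1}\|\le C\lambda_1^{m-n}$ for some $\lambda_1\in(0,1)$.

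Combining with the trivial bound $\|W-\widetilde{W}_m\|_\infty\le 1$, the interpolation $\|\cdot\|_q\le\|\cdot\|_\infty^{1-1/q}\|\cdot\|_1^{1/q}$ produces $\|W-\widetilde{W}_m\|_q\le C\lambda_1^{(m-n)/q}$. Setting $\lambda:=\lambda_1^{1/q}\in(0,1)$,
\[
|S^{(1)}_n|\le C\sum_{m=n+1}^\infty\lambda^{m-n}\|h_m\|_p\le C\,\eta_n,
\]
giving the claimed $\mathcal{O}(\lambda^n+\eta_n)$ bound. The principal technical obstacle is the $L_1$ estimate of $W-\widetilde{W}_m$: the separate accounting of the affine part on each $[a_j,x_j]$ and of the single interval that straddles the discontinuity of $W$ at $x_j$, together with the use of bounded geometry to control the return time $q_n$, is the heart of the argument.
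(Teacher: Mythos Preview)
Your proof is correct and takes a cleaner route than the paper's. The paper first truncates the martingale expansion at a large $N$ (this produces the explicit $O(\lambda^n)$ term), then introduces an auxiliary index $r_0$ depending on how close $x_i$ lies to $a_i$, and splits the remaining sum $\sum_{m=1}^N$ into three regimes: for $m\le n+r_0$ the function $h_m$ is constant on $[a_i,x_i]$ so the integral vanishes; for $n+r_0<m\le n+r_0+k$ a crude bound suffices; and for $m>n+r_0+k$ the weight is approximated by a step function $L^{m,i}$ taking the value of $w_i$ at the left endpoint of each $\xi_{m-1}$-atom, the error being controlled via Corollary~\ref{normxinm}. You instead package the weights into a single global function $W$ on $I$, observe that $\int_I\Phi_n W=0$ kills all terms with $m\le n$ at once, and for each $m>n$ subtract the conditional expectation $\widetilde W_m=E(W\mid\xi_{m-1})$ (the natural choice, playing the same role as the paper's $L^{m,i}$) so that $\int_I h_m\widetilde W_m=0$ by Theorem~\ref{martin3}(2). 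Your uniform $L_1$ bound $\|W-\widetilde W_m\|_1\le Cq_n\|\xi_{m-1}\|$, obtained by treating the affine part and the single straddling atom on each $[a_j,b_j]$ separately, handles all $m>n$ simultaneously and eliminates the $r_0$ device entirely. The one input you use that is not stated explicitly here is $q_n\|\xi_n\|\le C$; this is a standard consequence of $k$-bounded combinatorics (comparability of all atoms of $\xi_n$, via Lemma~\ref{DRnf} and \cite{CS2013}), and can also be bypassed by using the local form of Corollary~\ref{normxinm}, namely $|\Delta|\le C\lambda_1^{m-n}|[a_j,b_j]|$ for $\Delta\in\xi_{m-1}$ with $\Delta\subset[a_j,b_j]$, together with $\sum_j|[a_j,b_j]|\le1$, to obtain $\|W-\widetilde W_m\|_1\le C\lambda_1^{m-n}$ directly.
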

\begin{proof}
In order to use Theorem \ref{martin3} for $g=\frac{f''}{f'}$, we rewrite the sum $S^{(1)}_{n}$ as follows
\begin{equation}\label{sn1}
S^{(1)}_{n}=\sum\limits_{i=0}^{q_{n}-1}\int\limits_{a_i}^{x_i}\left(\frac{f^{\prime
\prime}(t)}{f^{\prime}(t)}-\Phi_0-\sum\limits_{m=1}^{N}h_m(t)\right)\left(\frac{t-a_i}{x_i-a_i}-\frac{1}{2}\right)dt+
\end{equation}
$$
+\sum\limits_{i=0}^{q_{n}-1}\int\limits_{a_i}^{x_i}\sum\limits_{m=1}^{N}h_m(t)\left(\frac{t-a_i}{x_i-a_i}-\frac{1}{2}\right)dt.
$$
It is easy to see that the absolute value of the first sum in (\ref{sn1}) is not greater than
$$
C\|\frac{f^{\prime
\prime}}{f^{\prime}}-\Phi_0-\sum\limits_{m=1}^{N}h_{m}\|_{1}.
$$
The first assertion of Theorem \ref{martin3} implies, that
$$
\lim\limits_{N\rightarrow\infty}\|\frac{f^{\prime \prime}}{f^{\prime}}-\Phi_0-\sum\limits_{m=1}^{N}h_{m}\|_{1}=0.
$$ Then we can choose a sufficiently large number $N$
such that
$$\left\|\frac{f^{\prime
\prime}}{f^{\prime}}-\Phi_0-\sum\limits_{m=1}^{N}h_{m}\right\|_{1}\leq
\lambda^{n}.
$$
Hence, the absolute value of the first sum in (\ref{sn1}) is bounded above by $C\lambda^{{n}}$.

Recall, that the point $x_i=f^i(x)$ belongs to the interval $[a_i, b_i].$ Next choose $r_{0}>0$ minimal, such that for $[a_i, b_i]$ one has
\begin{equation}\label{r0}
I^{(n+r_0+1)}_{\beta}\subset[a_i, x_i]\subset I^{(n+r_0)}_{\alpha}\subset[a_i, b_i],
\end{equation}
where $I^{(n+r_0+s)}\in \xi_{n+s},$ for $s=0,1$.

To estimate the last sum in (\ref{sn1}), we split the sum in the integrand into three terms corresponding to the summations over $1\le m\le n+r_{0},\,\, n+r_{0}+1 \le m \le n+r_{0}+k$ and  $n+r_0+k+1\le m\le N$. Consider the first  sum. By definition, the function $h_{m}(t)$ takes constant values on the atoms of the dynamical partition $\xi_{m}.$ On the other hand, when passing from partition $\xi_{m}$ to $\xi_{m+1}$, the elements of the partition $\xi_{m}$ are preserved, or divided in two subintervals. This together with $[a_i, x_i]\subset I^{(n+r_0)}\subset I^{(n)}_{\alpha,\, i}\in \xi_n$ imply that the function $h_{m}(t)$ takes constant values on the intervals $[a_i, b_i],$ i.e. $ h_{m}([a_i,\; b_i])=h_{m}(a_i),\,\, i=0, 1, ..., q^{\alpha}_n$. Using these remarks, we get
$$
\sum\limits_{i=0}^{q_{n}-1}
\int\limits_{a_i}^{x_i}\sum\limits_{m=1}^{n+r_0}h_{m}(t)\left(\frac{t-a_i}{x_i-a_i}-\frac{1}{2}\right)dt=
\sum\limits_{m=1}^{n+r_0}\sum\limits_{i=0}^{q_{n}-1}h_{m}(a_i)
\int\limits_{a_i}^{x_i}\left(\frac{t-a_i}{x_i-a_i}-\frac{1}{2}\right)dt=0.
$$
Consider the sum over $n+r_0+1 \leq m\leq n+r_{0}+k$. Then we have
$$
\sum\limits_{m=n+r_{0}+1}^{n+r_0+k}\sum\limits_{i=0}^{q_{n}-1}
\int\limits_{a_i}^{x_i}h_{m}(t)\left(\frac{t-a_i}{x_i-a_i}-\frac{1}{2}\right)dt\le
\sum\limits_{m=n+r_{0}+1}^{n+r_0+k}\sum\limits_{i=0}^{q_{n}-1}
\int\limits_{a_i}^{b_i}|h_{m}(t)|dt\le \sum\limits_{m=n+r_{0}+1}^{n+r_0+k}\|h_{m}\|_{p}.
$$
It is easy to see, that the last sum also belongs to the class $l_{2}$.

Next we consider the sum over $n+r_0+k+1 \leq m\leq N$ and denote this sum by $P_{n}$. Since $m\geq n+r_{0}+k+1,$ each atom $[a_i, b_i]\in \xi_{n}$ is the union of intervals of the partition $\xi_{m-1}.$ Define a piecewise constant function $L^{m, i}(y)$ on $[a_i, b_i]$ which takes constant values on the atoms of the partition $\xi_{m-1}$, such that
$$ L^{m, i}|_{[c^{(k-1)}, \;
d^{(k-1)}]}= \frac{c^{(k-1)}-a_i}{x_i-a_i}-\frac{1}{2},
$$
if $[c^{(k-1)},\; d^{(k-1)}]\in \xi_{m-1}$  and $[c^{(k-1)},\; d^{(k-1)}]\subset[a_i,\;  x_i]$. Then we rewrite the
sum $P_{n}$ as follows
$$
P_n=\sum\limits_{m=n+r_0+k+1}^{N}\sum\limits_{i=0}^{q_{n}-1}
\int\limits_{a_i}^{x_i}h_{m}(t)\left[\frac{t-a_i}{x_i-a_i}-\frac{1}{2}-L^{m,i}(t)\right]dt+
$$
$$
+\sum\limits_{m=n+r_0+k+1}^{N}\sum\limits_{i=0}^{q_{n}-1}
\int\limits_{a_i}^{x_i}h_{m}(t)L^{m,i}(t)dt.
$$
Denote by $P_{n}^{(1)}$ and $P_{n}^{(2)}$ the last two sums over $m$, respectively. First we estimate the sum $P_{n}^{(2)}.$ Since $[a_i, x_i]\subset [a_i, b_i]\in \xi_{n},$ the interval $[a_i, x_i]$ is covered by intervals of the partition $\xi_{m-1}.$ Denote by $\bar I_{i}^{(m-1)}$ the interval of the partition $\xi_{m-1}$ containing the point $x_{i}$. If there are two such intervals then we consider the left one. Applying the second assertion of Theorem \ref{martin3},
we obtain:
$$ |P_{n}^{(2)}|\le
\left|\sum\limits_{m=n+r_0+k+1}^{N}\sum\limits_{i=0}^{q_{n}-1}
     \sum\limits_{I^{(m-1)}\subset [a_i, b_i]}
     L^{m, i}(I^{(m-1)})\int\limits_{I^{(m-1)}}h_{m}(t)dt\right|+
$$
$$ +\sum\limits_{m=n+r_0+k+1}^{N}\sum\limits_{i=0}^{q_{n}-1}
L^{m,i}(\bar I_i^{(m-1)})\int\limits_{\bar
I_{i}^{(m-1)}}|h_{m}(t)|dt\le C \sum\limits_{m=n+r_0+k+1}^{\infty}
\int\limits_{U_{m}}|h_{m}(t)|dt,
$$
where $U_m=\bigcup\limits_{i=0}^{q^{\alpha}_{n}-1}f^{i}(\bar I_{0}^{(m-1)})$. Lemma \ref{ellsum} implies that \ $\ell (U_m)\leq
\lambda_{1}^{m-n-1}$. We have
$$
\sum\limits_{m=n+r_0+k+1}^{\infty}\int\limits_{U_{m}}|h_{m}(t)|dt\le
\sum\limits_{m=n+k+1}^{\infty}\|h_{m}\|_p(\ell(U_m))^{\frac{1}{q}}\leq
C\sum\limits_{m=n+1}^{\infty}\lambda^{m-n-1}_{2}\|h_{m}\|_p=\eta_n,
$$
where $\lambda_2=\lambda_{1}^{1/kq}$. Finally, $|P_{n}^{(2)}|\le \eta_n$ and $\{\eta_n\}\in l_2$, due to Proposition \ref{etan}.

Since $m\geq n+r_0+k+1$, Corollary \ref{normxinm} implies that
\begin{equation}\label{inimpor}
\left|\frac{t-a_i}{x_i-a_i}-\frac{1}{2}-L_{m,
i}(t)\right|=\left|\frac{t-c^{(m-1)}}{x_i-a_i}\right|\le \left|\frac{d^{(m-1)}-c^{(m-1)}}{I^{(n+r_{0}+1)}_{\beta}}\right| \le
\lambda^{m-n-r_0-2}_1,
\end{equation}
for all \, $t\in[c^{(m-1)}, d^{(m-1)}]\subset[a_i, x_i]$, with $\lambda_{1}=\lambda^{1/k}$. Using this estimate, we obtain:
$$|P_{n}^{(1)}|\le
\sum\limits_{m=n+r_0+k+1}^{N}\lambda^{m-n-r_0-2}_1\sum\limits_{i=0}^{q_{n}-1}\int\limits_{I^{(n+r_0)}(i)}|h_{m}(t)|dt\le
$$
$$
\le\sum\limits_{m=n+r_0+k+1}^{N}\lambda^{m-n-r_0-2}_1\cdot\lambda_{1}^{\frac{r_0}{q}}
\cdot\left(\int\limits_{U_{r_0}}|h_{m}(t)|^{p}dt\right)^{1/p}
\le C \sum\limits_{m=n}^{\infty}\lambda^{m-n}_2\|h_{m}\|_p= \eta_n,
$$
where we have used
$$
U_{r_0}=\bigcup\limits_{i=0}^{q_{n}-1}I^{(n+r_0)}(i),\,\,\, \mbox{\rm and} \,\,\, \ell(U_{r_0})<\lambda_{1}^{n+r_0-n}=\lambda_{1}^{r_0}.
$$
By Proposition \ref{etan}, $\{\eta_n\}\in l_2$. This completes the proof.
\end{proof}

Set
$$
E_n:= \sum\limits_{i=0}^{q_n-1}\left((1-z_i)\int\limits_{a_i}^{x_i}\frac{f''(t)}{f'(t)}\frac{t-a_i}{x_i-a_i}dt-
z_i\int\limits_{x_i}^{b_i}\frac{f''(t)}{f'(t)}\frac{b_i-t}{b_i-x_i}dt\right).
$$

\begin{rem}\label{modEn} Using the same arguments for estimating $|S^{(1)}_n|$, one can show that $|E_n|=\mathcal{O}(\lambda^{n}+\eta_n)$.
\end{rem}

Now we define
$$
Q_n:=\sum\limits_{i=0}^{q_n-1}\int\limits_{a_i}^{b_i}\left|\int\limits_{a_i}^{x_i}
\frac{f''(t)}{f'(t)}\frac{t-a_i}{(x_i-a_i)^2}dt-
\int\limits_{x_i}^{b_i}\frac{f''(t)}{f'(t)}\frac{b_i-t}{(b_i-x_i)^2}dt\right|dx_i.
$$

\begin{prop}\label{modQn} Let $f\in \mathbb{B}^{KO}$. Then we have $|Q_{n}|=\mathcal{O}(\lambda^{n}+\eta_n)$, where $\lambda\in(0, 1)$ and $\eta_{n}\in l_{2}$ is from Proposition \ref{etan}.
\end{prop}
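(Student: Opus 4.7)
The plan is to follow the template of Proposition \ref{modSn1}: expand $g(t):=\frac{f''(t)}{f'(t)}$ via the martingale decomposition $g=\Phi_{0}+\sum_{m=1}^{N}h_{m}+R_{N}$, with $N$ chosen so large that $\|R_{N}\|_{1}\le\lambda^{n}$, substitute into $Q_{n}$, and show that each resulting piece is either zero by cancellation or bounded by a multiple of $\lambda^{n}+\eta_{n}$. The key observation driving the argument is that if $g$ is constant on the interval $[a_{i},b_{i}]$, then the inner expression
$$
\int_{a_{i}}^{x_{i}}g(t)\frac{t-a_{i}}{(x_{i}-a_{i})^{2}}dt-\int_{x_{i}}^{b_{i}}g(t)\frac{b_{i}-t}{(b_{i}-x_{i})^{2}}dt
$$
equals $g/2-g/2=0$ by elementary integration. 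Consequently the $\Phi_{0}$-term vanishes, and so does the contribution of every $h_{m}$ with $m\le n$, since such $h_{m}$ is constant on the atom $[a_{i},b_{i}]\in\xi_{n}$.

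For the remainder $R_{N}$, I would bound the inner expression by the sum of its two integrals in absolute value and then swap the order of integration with $x_{i}$. The explicit computation
$$
\int_{t}^{b_{i}}\frac{t-a_{i}}{(x_{i}-a_{i})^{2}}dx_{i}+\int_{a_{i}}^{t}\frac{b_{i}-t}{(b_{i}-x_{i})^{2}}dx_{i}=\Bigl(1-\frac{t-a_{i}}{b_{i}-a_{i}}\Bigr)+\Bigl(1-\frac{b_{i}-t}{b_{i}-a_{i}}\Bigr)=1
$$
shows that the $R_{N}$-contribution to $Q_{n}$ is at most $\sum_{i=0}^{q_{n}-1}\int_{a_{i}}^{b_{i}}|R_{N}(t)|\,dt\le\|R_{N}\|_{1}\le\lambda^{n}$, since the iterates $[a_{i},b_{i}]$ are pairwise disjoint. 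The same swap applied to an individual $h_{m}$ yields the crude bound $C\|h_{m}\|_{p}$, which summed over the short range $n<m\le n+k$ gives at most $\lambda^{-k}\eta_{n}=O(\eta_{n})$.

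The bulk of the work lies in the tail $m>n+k$, where the martingale orthogonality $\int_{\Delta}h_{m}\,dt=0$ for every $\Delta\in\xi_{m-1}$ (Theorem \ref{martin3}(2)) must be exploited. Following the step-function strategy of Proposition \ref{modSn1}, I would introduce, for fixed $i$ and $x_{i}$, a piecewise constant approximation $L^{m,i}(\cdot,x_{i})$ on atoms of $\xi_{m-1}$ of the discontinuous weight
$$
W_{i}(t,x_{i}):=\frac{t-a_{i}}{(x_{i}-a_{i})^{2}}\mathbf{1}_{[a_{i},x_{i}]}(t)-\frac{b_{i}-t}{(b_{i}-x_{i})^{2}}\mathbf{1}_{(x_{i},b_{i}]}(t).
$$
Orthogonality annihilates $\int h_{m}L^{m,i}\,dt$ atom by atom, and the approximation error $|W_{i}-L^{m,i}|$ on atoms not adjacent to $x_{i}$ is controlled, in analogy with (\ref{inimpor}), by $\lambda_{1}^{m-n-r_{0}-2}$ where $r_{0}=r_{0}(x_{i})$ plays the role of the parameter chosen in the earlier proof. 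Combining H\"older's inequality with Lemma \ref{ellsum} to estimate the measure of the iterated bad atoms then yields a total contribution of the form $C\sum_{m\ge n}\lambda^{m-n}\|h_{m}\|_{p}=C\eta_{n}$.

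The main technical obstacle is that the weights $W_{i}(t,x_{i})$ blow up as $x_{i}\to a_{i}$ or $x_{i}\to b_{i}$, so the step-function approximation is only effective once $x_{i}$ lies well inside $[a_{i},b_{i}]$ relative to $\xi_{n+r_{0}}$. Whereas in Proposition \ref{modSn1} the parameter $r_{0}$ was fixed by a single point $x$, here it must be treated as a function of the outer integration variable $x_{i}\in[a_{i},b_{i}]$, and the hard part will be to show that the sub-interval of $[a_{i},b_{i}]$ where $r_{0}$ is large has exponentially small measure, for which Corollary \ref{normxinm} should again provide the required geometric decay. Once this is handled, the three contributions combine to give $|Q_{n}|=O(\lambda^{n}+\eta_{n})$, with $\{\eta_{n}\}\in l_{2}$ by Proposition \ref{etan}.
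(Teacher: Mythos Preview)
Your proposal follows essentially the same scheme as the paper: martingale decomposition of $g=f''/f'$, exact cancellation for the constant part $\Phi_0$ and for every $h_m$ with $m\le n$, a crude bound on a short transitional range, and a step-function approximation on atoms of $\xi_{m-1}$ for the tail, using $\int_{\Delta}h_m\,dt=0$ on $\Delta\in\xi_{m-1}$. The one genuine difference is your handling of the remainder $R_N$: you swap the order of integration via Fubini and compute $\int_t^{b_i}(t-a_i)(x_i-a_i)^{-2}dx_i+\int_{a_i}^t(b_i-t)(b_i-x_i)^{-2}dx_i=1$, which yields the bound $\|R_N\|_1$ directly. The paper instead applies H\"older to each inner integral and arrives at a bound by $\|R_N\|_2$; your route is more elementary and needs only $L_1$-convergence of the martingale, not $L_2$. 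The technical obstacle you flag---that $r_0$ depends on the outer integration variable $x_i$---is real and is precisely where the paper's extra work lies: it keeps the factor $\tfrac{1}{x_i-a_i}$ outside, approximates only the bounded weight $\tfrac{t-a_i}{x_i-a_i}$ by the step function, and then disposes of the singular factor via $\tfrac{1}{x_i-a_i}=(x_i-a_i)^{1/q-1}(x_i-a_i)^{-1/q}$, the first factor being integrable in $x_i$ and the second combining with the atom-size ratios to give the geometric decay in $m-n$.
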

\begin{proof} It is clear that
$$
Q_n\leq \sum\limits_{i=0}^{q_n-1}\int\limits_{a_i}^{b_i}
\left|\int\limits_{a_i}^{x_i}\left(\frac{f''(t)}{f'(t)}-\Phi_0-
\sum\limits_{m=1}^{N}h_m(t)\right)\frac{t-a_i}{(x_i-a_i)^2}dt-
\right.$$
\begin{equation}\label{in}
\left.-\int\limits_{x_i}^{b_i}\left(\frac{f''(t)}{f'(t)}-\Phi_0-
\sum\limits_{m=1}^{N}h_m(t)\right)\frac{b_i-t}{(b_i-x_i)^2}dt\right|dx_i+
\end{equation}
$$
+\sum\limits_{i=0}^{q_n-1}\int\limits_{a_i}^{b_i}\left|\frac{1}{x_i-a_i}
\int\limits_{a_i}^{x_i}\sum\limits_{m=1}^{N}h_m(t)\frac{t-a_i}{x_i-a_i}dt-
\frac{1}{b_i-x_i}\int\limits_{x_i}^{b_i}\sum\limits_{m=1}^{N}h_m(t)\frac{b_i-t}{b_i-x_i}dt
\ \right| dx_i.
$$
Denote by $Q^{(1)}_n$ and $Q^{(2)}_n$ the last two sums over $i$ in (\ref{in}), respectively. Let us first estimate $Q^{(1)}_{n}.$ Using H\"{o}lder's inequality for the integrals over $[a_i, x_i]$ and $[x_i, b_i]$ in $Q^{(1)}_n$
we get
$$
Q^{(1)}_n\leq
\frac{4}{\sqrt{3}}\sum\limits_{i=0}^{q_n-1}\left(\int\limits_{a_i}^{b_i}
\left|\frac{f''(t)}{f'(t)}-\Phi_0-\sum\limits_{m=1}^{N}h_m(t)\right|^2dy\right)^{1/2}\cdot\sqrt{b_i-a_i}.
$$
Again using H\"{o}lder's inequality for the last sum we obtain:
$$
Q^{(1)}_n\leq
\frac{4}{\sqrt{3}}\left\|\frac{f''}{f'}-\Phi_0-\sum\limits_{m=1}^{N}h_m\right\|_2.
$$
The first assertion of Theorem \ref{martin3} implies that $$\lim\limits_{N\rightarrow+\infty}\left\|\frac{f''}{f'}-\Phi_0-\sum\limits_{m=1}^{N}h_m\right\|_2=0.
$$
Then we choose sufficiently a large number $N$ such that
$$\left\|\frac{f''}{f'}-\Phi_0-\sum\limits_{m=1}^{N}h_m\right\|_2\leq\lambda^{n}_2.
$$
Hence $Q^{(1)}_n$ is bounded above by $C\lambda^{n}_2$.

To estimate $Q^{(2)}_{n},$  we split the  integrand into three terms with summations over $1\le m\le n+r_0,\,\, n+r_{0}+1 \le m \le n+r_{0}+k$, and $n+r_0+k+1\le m\le N$, where $r_0$ was defined in (\ref{r0}). Denote the corresponding sums by $T_{1},\, T_{2},\, T_{3}$. Then $Q^{(2)}_{n}\le T_{1}+T_{2}+T_{3}$. Consider first the sums over  $m$  from $1$ to $n+r_{0}.$ The piecewise constant function $h_m(t)$ takes constant values on the  atoms of the partition $\xi_{m}$. Since $[a_i, x_i]\subset I^{(n+r_0)}\subset I^{(n)}_{\alpha, \, i}\in \xi_n$, the function $h_m(t)$ takes constant values on the intervals  $[a_i, b_i],\,\,\, 1\leq m \leq n+r_{0}$, i.e.
$h_{m}([a_i,\; b_i])=h_{m}(a_i)$. Then we have
$$
T_{1}=\sum\limits_{i=0}^{q_n-1}\int\limits_{a_i}^{b_i}\left|
\int\limits_{a_i}^{x_i}\sum\limits_{m=1}^{n+r_0}h_{m}(y)\frac{y-a_i}{(x_i-a_i)^2}dy-
\int\limits_{x_i}^{b_i}\sum\limits_{m=1}^{n+r_0}h_{m}(y)\frac{b_i-y}{(b_i-x_i)^2}dy\right|dx_i=
$$
$$
=\sum\limits_{i=0}^{q_n-1}\sum\limits_{m=1}^{n+r_0}h_{m}(a_i)\int\limits_{a_i}^{b_i}
\left|\int\limits_{a_i}^{x_i}\frac{y-a_i}{(x_i-a_i)^2}dy-
\int\limits_{x_i}^{b_i}\frac{b_i-y}{(b_i-x_i)^2}dy\right|dx_i=0,
$$
where we used, that the difference of two integrals in the last sum vanishes.

Consider next the sum $T_{2}$. Using Holder's inequality for the integral and for the sum we obtain:
$$
T_{2}\le \sum\limits_{m=n+r_{0}+1}^{n+r_{0}+k}\sum\limits_{i=0}^{q_n-1}\int\limits_{a_i}^{b_i}\left[\left(
\int\limits_{a_i}^{x_i}|h_m(t)|^{p}dt\right)^{\frac{1}{p}}(x_{i}-a_{i})^{\frac{1}{q}-1}\right]dx_{i}+
$$
$$
+\sum\limits_{m=n+r_{0}+1}^{n+r_{0}+k}\sum\limits_{i=0}^{q_n-1}\int\limits_{a_i}^{b_i}\left[
\left(\int\limits_{x_i}^{b_i}|h_m(t)|^{p}dt\right)^{\frac{1}{p}}(b_{i}-x_{i})^{\frac{1}{q}-1}\right]dx_{i}\le
$$
$$
\le 2q\sum\limits_{m=n+r_{0}+1}^{n+r_{0}+k}\sum\limits_{i=0}^{q_n-1}\left(
\int\limits_{a_i}^{b_i}|h_m(t)|^{p}dt\right)^{\frac{1}{p}}(b_{i}-a_{i})^{\frac{1}{q}}\le 2q\sum\limits_{m=n+r_{0}+1}^{n+r_{0}+k}\|h_{m}\|_{p}.
$$
Since $\{\|h_{m}\|_{p}\}\in l_{2}$ and $k$ is a fixed number, the last sum also belongs to $l_{2}$.

Next we consider the sum $T_{3}$, i.e. the sum over  $n+r_0+k+1 \leq m \leq N$.  Notice, that each interval $[a_i, b_i ]\in \xi_n$ is the union of a finite number of intervals of the partition $\xi_{m-1}$. Define piecewise constant functions $\widetilde{L}^{m, i}(t)$  and $\widetilde{M}^{m, i}(t)$ on $[a_i, x_i]$ and $[x_i, b_i]$, respectively, which are approximations of the integrands $Q^{(2)}_{n}$ in the corresponding intervals and take constant values on the atoms of $\xi_{m-1},$ as follows
$$
\widetilde{L}^{m, i}|_{[c^{(m-1)},
d^{(m-1)}]}=\frac{c^{(m-1)}-a_i}{x_i-a_i},
$$
if  $[c^{(m-1)}, d^{(m-1)}]\in \xi_{m-1}$ and  $[c^{(m-1)}, d^{(m-1)}]\subset [a_{i}, x_{i}],$ respectively
$$
\widetilde{M}^{m, i}|_{[c^{(m-1)}, d^{(m-1)}]}=\frac{b_i-d^{(m-1)}}{b_i-x_i},
$$
if  $[c^{(m-1)}, d^{(m-1)}]\in \xi_{m-1}$ and $[c^{(m-1)}, d^{(m-1)}]\subset [x_{i}, b_i]$.

Then we have
$$
T_{3}\leq \sum\limits_{m=n+r_0+k+1}^{N}\sum\limits_{i=0}^{q_n-1}\int\limits_{a_i}^{b_i}
\left|\frac{1}{x_i-a_i}\int\limits_{a_i}^{x_i}h_m(t)\left(\frac{t-a_i}{x_i-a_i}-\widetilde{L}^{m,
i}(t)\right)dt\right|dx_i+
$$
\begin{equation}\label{jn}
+\sum\limits_{m=n+r_0+k+1}^{N}\sum\limits_{i=0}^{q_n-1}\int\limits_{a_i}^{b_i}\left|\frac{1}{b_i-x_i}
\int\limits_{x_i}^{b_i}h_m(t)\left(\frac{b_i-t}{b_i-x_i}-\widetilde{M}^{m,
i}(t)\right)dt\right|dx_i+
\end{equation}
$$
+\sum\limits_{i=0}^{q_n-1}\int\limits_{a_i}^{b_i}\left|\int\limits_{a_i}^{x_i}\sum\limits_{m=n+r_0+k+1}^{N}h_m(t)\cdot\frac{\widetilde{L}^{m, i}(t)}{x_i-a_i}dt-
\int\limits_{x_i}^{b_i}\sum\limits_{m=n+r_0+k+1}^{N}h_m(t)\cdot\frac{\widetilde{M}^{m,
i}(t)dt}{b_i-x_i}\right|dx_i
$$
Denote by $J^{(1)}_{n}$,  $J^{(2)}_{n}$,  $J^{(3)}_{n}$ the three double sums in (\ref{jn}). Consider first the sum $J^{(3)}_{n}$. Recall, that $[a_i, x_i]\subset [a_i, b_i]$ and the interval $[a_i, x_i]$ is covered by intervals of the partition $\xi_{m-1}$. If $x_i$ lies on the boundary of one of the intervals of $\xi_{m-1}$, then by the second assertion of Theorem \ref{martin3}  $J^{(3)}_n$ vanishes. If the point $x_i$ lies inside of some interval of $\xi_{m-1}$, we denote this interval by $\widetilde{I}^{(m-1)}(x_i)$ and get
$$
J_n^{(3)}\leq
\sum\limits_{m=n+r_0+k+1}^{N}\sum\limits_{i=0}^{q_{n}-1}\int\limits_{a_i}^{b_i}\left|\frac{1}{x_i-a_i}\sum\limits_{I^{(m-1)}
\subset[a_i, x_i]}^{''}\widetilde{L}^{m,i}(I^{(m-1)})
\int\limits_{I^{(m-1)}}h_m(t)dt\right|dx_i+
$$
$$+\sum\limits_{m=n+r_0+k+1}^{N}\sum\limits_{i=0}^{q_{n}-1}\int\limits_{a_i}^{b_i}\frac{1}{x_i-a_i}\widetilde{L}^{m,i}
(\widetilde{I}^{(m-1)}(x_i))\int\limits_{\widetilde{I}^{(m-1)}(x_i)}
|h_m(t)|dtdx_i+
$$
$$
+\sum\limits_{m=n+r_0+k+1}^{N}\sum\limits_{i=0}^{q_{n}-1}\int\limits_{x_i}^{b_i}\left|\frac{1}{b_i-x_i}\sum\limits_{I^{(m-1)}
\subset[x_i, b_i]}^{''}\widetilde{M}^{m,i}(I^{(m-1)})
\int\limits_{I^{(m-1)}}h_m(t)dt\right|dx_i+
$$
$$
+\sum\limits_{m=n+r_0+k+1}^{N}\sum\limits_{i=0}^{q_{n}-1}\int\limits_{a_i}^{b_i}\frac{\widetilde{M}^{m,i}
(\widetilde{I}^{(m-1)}(x_i))}{b_i-x_i}\int\limits_{\widetilde{I}^{(m-1)}(x_i)}
|h_m(t)|dtdx_i=(I)+(II)+(III)+(IV).
$$
By the second assertion of Theorem \ref{martin3} the sums (I) and (III) are equal to zero. We estimate only sum (II), the sum (IV) is estimated analogously. Note that the step function $\widetilde{L}^{m,i}$ is bounded above by $1$. Using H\"{o}lder's inequality for the second (interior) integral in (II), we obtain:
$$
\int\limits_{\widetilde{I}^{(m-1)}(x_i)} |h_m(t)|dt\leq
\left(\int_{\widetilde{I}^{(m-1)}(x_i)}
|h_m(t)|^{p}dt\right)^{1/p}\cdot |\widetilde{I}^{(m-1)}(x_i)|^{1/q}.
$$
We take the maximum of the integral
$$
\int\limits_{\widetilde{I}^{(m-1)}(x_i)} |h_m(t)|^{p}dt
$$
over $x_i$. Then, after multiplying with $|\widetilde{I}^{(m-1)}(x_i)|^{1/q}$, we simplify as follows
$$
\frac{|\widetilde{I}^{(m-1)}(x_i)|^{1/q}}{x_i-a_i}\leq
\frac{|\widetilde{I}^{(m-1)}(x_i)|^{1/q}}{|I^{n+r_0}|^{1/q}}\cdot
(x_i-a_i)^{\frac{1}{q}-1}\leq \lambda^{m-n-r_0-2}_2\cdot
(x_i-a_i)^{\frac{1}{q}-1},
$$
where we used Corollary \ref{normxinm} and the definition of $r_0$. After these preparations we have
$$
(II)\leq q\sum\limits_{m=n+r_0+k+1}^{N}\lambda^{m-n-r_0-2}_2\sum\limits_{i=0}^{q_{n}-1}
\max\limits_{a_i\leq x_i\leq
b_i}\left(\int_{\widetilde{I}^{(m-1)}(x_i)}
|h_m(t)|^{p}dt\right)^{1/p}(b_i-a_i)^{\frac{1}{q}}\leq
$$
$$
\leq
C\sum\limits_{m=n+r_0+k+1}^{N}\lambda^{m-n-r_0-2}_2\|h_m\|_p\leq C\sum\limits_{m=n}^{\infty}\lambda^{m-n}_2\|h_m\|_p=\eta_n.
$$
Finally, due to the Proposition \ref{etan},  $|J_n^{(3)}|\leq \eta_n$ and $\{\eta_n\}\in l_2$.

We next estimate $J^{(1)}_n$ in (\ref{jn}), $J^{(2)}_n$ is estimated analogous. Using inequality (\ref{inimpor}) and H\"{o}lder's inequality for the interior integral over $[a_i, x_i]$  in $J^{(1)}_n$, we obtain
$$
\int\limits_{a_i}^{b_i}
\left|\frac{1}{x_i-a_i}\int\limits_{a_i}^{x_i}h_m(t)\left(\frac{t-a_i}{x_i-a_i}-\widetilde{L}^{m,
i}(t)\right)dt\right|dx_i\leq
$$
$$
\leq\lambda_{1}^{m-n-r_0-3}\left(\int\limits_{a_i}^{b_i}|h_m(t)|^{p}dt\right)^{1/p}
\int\limits_{a_i}^{b_i}(x_i-a_i)^{\frac{1}{q}-1}dx_i=
$$
$$
=q\lambda_{1}^{m-n-r_0-3}\left(\int\limits_{a_i}^{b_i}|h_m(t)|^{p}dt\right)^{1/p}
(b_i-a_i)^{\frac{1}{q}}.
$$
Then, using H\"{o}lder's inequality for the sum over $i$ in $J^{(1)}_n$, we get
$$
|J_n^{(1)}|\leq C\sum\limits_{m=n+r_0+k+1}^{\infty}\lambda_{1}^{m-n-r_0-1}
\left(\sum\limits_{i=0}^{q_{n}-1}\int\limits_{a_i}^{b_i}|h_m(t)|^{p}dt\right)^{1/p}
\left(\sum\limits_{i=0}^{q_{n}-1}(b_i-a_i)\right)^{1/q}\leq\eta_n.
$$
We are done.
\end{proof}

Set
$$
U_{n}:=\sum\limits_{i=0}^{q_n-1}\int\limits_{a_i}^{b_i}\left[\int\limits_{a_i}^{x_i}\left(f''(x_i)-f''(t)\right)
\frac{t-a_i}{(x_i-a_i)^2}dt+\int\limits_{x_i}^{b_i}\left(f''(x_i)-f''(t)\right)
\frac{b_i-t}{(b_i-x_i)^2}dt\right]dx_{i}.
$$

\begin{rem}\label{modUn} Using the same arguments for estimating $Q_{n}$, one can show, that  $|U_n|=\mathcal{O}(\lambda^{n}+\eta_n)$. Note, that here the differences of $f''$ in $U_{n}$ allow us to use the martingale expansion.
\end{rem}

\section{Estimates for $\tau_n(z_0)$}

In this section we will obtain some estimates for the sum $\tau_n(z_0)$ defined in (\ref{psitau}). More specifically, the estimates for $\tau_n(z_0)$ are reduced to the estimates in Propositions \ref{modSn1}, \ref{modQn} and Remarks \ref{modEn}, \ref{modUn}.

Define
\begin{equation}\label{Ai}
A_i:={-}\frac{\frac{1}{f'(a_i)(x_i-a_i)}\cdot\int\limits_{a_{i}}^{x_{i}}f''(t)(t-a_i)dt+
\frac{1}{f'(a_i)(b_i-x_i)}\cdot\int\limits_{x_i}^{b_i}f''(t)(b_i-t)dt}{1+
\frac{1}{f'(a_i)(b_i-a_i)}\int\limits_{a_i}^{b_i}f''(t)(b_i-t)dt},
\end{equation}
\begin{equation}\label{psitau}
N_{i}:=\int\limits_{a_i}^{b_i}\frac{f^{\prime \prime} (t)}{2f^{\prime}
(t)}dt,\,\,\,\,\,\, \psi_i(z_{0})=N_{i}-\log\Big(\frac{1+A_{i}z_i}{1+A_i(z_{i}-1)}\Big),
\,\,\,\,\, \tau_{n}(z_0):=\sum_{i=0}^{q_{n}-1}\psi_i(z_0).
\end{equation}

\begin{prop}\label{estimtaun} Suppose that $f\in \mathbb{B}^{KO}$. Then the following estimates hold for $\tau_n(z_0)$ and its derivatives
\begin{equation}\label{modtaun} \max\limits_{0\leq
z_0\leq1}{|\tau_n(z_0)|}\leq \delta_n,\,\,\,\,\,\, \max\limits_{0\leq
z_0\leq1}{|(z_0-z^{2}_0)\tau'_n(z_0)|}\leq \delta_n,
\end{equation}
\begin{equation}\label{modtaun1}
\int\limits_{[0, 1]}|\tau'_n(z_0)|dz_0\leq \delta_n,\,\,\,\,\,\,
\int\limits_{[0, 1]}|(z_0-z^{2}_0)|\tau''_n(z_0)|dz_0\leq \delta_n,
\end{equation}
where $\delta_{n}=\mathcal{O}(\lambda^{n}+\eta_n),\, \lambda\in(0, 1)$ and $\eta_{n}\in l_{2}$ is from proposition \ref{etan}.
\end{prop}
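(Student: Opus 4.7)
The strategy is to expand $\psi_i(z_0)$ and its first two $z_0$-derivatives explicitly in terms of integrals of $f''/f'$ against polynomial weights in $(t-a_i)/(b_i-a_i)$, and then to recognise the resulting sums as combinations of the quantities $S^{(1)}_n$, $E_n$, $Q_n$ and $U_n$ controlled in Section~5, whence the proposition will follow from Propositions~\ref{modSn1}, \ref{modQn} and Remarks~\ref{modEn}, \ref{modUn}.

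First I would integrate by parts inside the definition~\eqref{Ai} of $A_i$. Using the identities
$\int_{a_i}^{x_i} f''(t)(t-a_i)\,dt = f'(x_i)(x_i-a_i) - (f(x_i)-f(a_i))$ and
$\int_{x_i}^{b_i} f''(t)(b_i-t)\,dt = -f'(x_i)(b_i-x_i) + (f(b_i)-f(x_i))$,
one obtains the compact representation $A_i = (\bar s_1-\bar s_2)/\bar s$, where $\bar s_1,\bar s_2,\bar s$ denote the average slopes of $f$ on $[a_i,x_i]$, $[x_i,b_i]$, $[a_i,b_i]$ respectively. From the original formula for the numerator one gets
$|A_i| \leq C\int_{a_i}^{b_i}|f''(t)|\,dt \leq C\|\xi_n\|^{1/q}\|f''\|_p$,
which is uniformly small in $i$ by Lemma~\ref{normxink}, so Taylor expansion
$$\log\frac{1+A_iz_i}{1+A_i(z_i-1)} = A_i + \frac{A_i^2}{2}(1-2z_i) + \mathcal{O}(A_i^3)$$
is valid with a remainder whose sum over $i$ is $\mathcal{O}(\lambda^n)$ by Lemma~\ref{ellsum}.

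For the sup-bound on $|\tau_n|$, substitute the expansion above and, after writing $N_i-A_i$ via integration by parts as an integral of $f''/f'$ against weights of the form $(t-a_i)/(x_i-a_i)-\tfrac12$ and $(b_i-t)/(b_i-x_i)$, recognise the summed contribution as $S^{(1)}_n$ plus an $E_n$-correction (carrying the $z_i$-dependence of the quadratic term). The first bound in~\eqref{modtaun} then follows from Proposition~\ref{modSn1} and Remark~\ref{modEn}. For $(z_0-z_0^2)|\tau_n'(z_0)|$ and $\int|\tau_n'|\,dz_0$, I differentiate $\psi_i$ with respect to $z_0$ by the chain rule: the identity $\partial_{z_i}\log\frac{1+A_iz_i}{1+A_i(z_i-1)} = -A_i^2/[(1+A_iz_i)(1+A_i(z_i-1))]$ shows that this derivative is of order $A_i^2$; multiplying by $dz_i/dz_0$ (bounded by Lemma~\ref{propzi}) and by the ratio $z_0(1-z_0)/(z_i(1-z_i))$ (also bounded by Lemma~\ref{propzi}), both inequalities reduce to $Q_n$-type estimates and follow from Proposition~\ref{modQn}. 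For the last inequality in~\eqref{modtaun1}, differentiate once more and use the integral bound $\int_0^1|d^2z_i/dz_0^2|\,dz_0 \leq C\|f''/f'\|_1$ from Lemma~\ref{propzi}; the terms that arise involve differences $f''(x_i)-f''(t)$ inside the integrand and are exactly what is controlled by Remark~\ref{modUn}.

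The main obstacle will be the algebraic bookkeeping in matching each term of the Taylor expansion of $\psi_i$ (after substituting the integral form of $A_i$) to one of the canonical pieces $S^{(1)}_n$, $E_n$, $Q_n$, $U_n$; the quantities of Section~5 are engineered precisely for this purpose, but verifying the matching requires care with the chain-rule contribution $\partial_{z_0}A_i$ (which one shows produces only an additional $\mathcal{O}(A_i)$ factor absorbed by the same estimates) and with the weight $z_0(1-z_0)$ needed to cancel the natural singularity of $\log$-derivatives at the endpoints.
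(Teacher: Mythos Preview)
There is a genuine gap in your treatment of the derivative estimates. The identity you state,
\[
\partial_{z_i}\log\frac{1+A_iz_i}{1+A_i(z_i-1)}=\frac{-A_i^2}{(1+A_iz_i)(1+A_i(z_i-1))},
\]
is only valid if $A_i$ were constant in $z_i$. But $A_i$ depends on $x_i$ (through both integration limits in \eqref{Ai}), hence on $z_i$, and the correct formula is \eqref{dphi}:
\[
\frac{d\psi_i}{dz_i}=\frac{A_i^2-A_i'}{(1+A_iz_i)(1+A_i(z_i-1))}.
\]
Your parenthetical remark that $\partial_{z_0}A_i$ ``produces only an additional $\mathcal{O}(A_i)$ factor absorbed by the same estimates'' is the core error. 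Even granting $|A_i'|\lesssim|A_i|$ in the bulk (which already fails near the endpoints, whence the need for the weight $z_0(1-z_0)$), the sum $\sum_i|A_i|$ is only $O(\|f''\|_1)=O(1)$, not $\delta_n$-small. So the $A_i'$ contribution cannot be absorbed; it is in fact the \emph{dominant} term, while $\sum A_i^2=\mathcal{O}(\lambda^n)$ is the negligible one.

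What actually happens in the paper is that the sum $\sum_i A_i'$ has a difference structure: by \eqref{dai1}--\eqref{dai}, each $A_i'$ is (up to replacing $f''$ by $f''/f'$, which costs only $\mathcal{O}(\lambda^n)$) a difference of two similar weighted integrals over $[a_i,x_i]$ and $[x_i,b_i]$. After multiplying by $z_i(1-z_i)$ one obtains exactly $E_n$; after integrating in $z_0$ one obtains $Q_n$. It is this cancellation structure, not term-by-term smallness, that makes the martingale argument of Section~5 applicable. Your plan has the right list of ingredients ($E_n$, $Q_n$, $U_n$) but assigns them to the wrong terms: $E_n$ and $Q_n$ control $\sum A_i'$, not $\sum A_i^2$. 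Once you correct \eqref{dphi} and route the $A_i'$ sums through $E_n$ and $Q_n$ as the paper does, the rest of your outline (including the use of Lemma~\ref{propzi} for $dz_i/dz_0$ and the $U_n$ bound for $A_i''$ in the second-derivative estimate) goes through.
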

\begin{proof} Denote by $V_i$ the second term in the denominator of $A_i$. Using H\"{o}lder's inequality we get
$$
|V_i|\leq\frac{1}{f'(a_i)(b_i-a_i)}\int\limits_{a_i}^{b_i}|f''(t)|(b_i-t)dt\leq
C(b_i-a_i)^{1/q},\,\,\,\,\, \mbox{\rm where} \,\,\,\,\,
q=\frac{p}{p-1}.
$$
In analogy one can show, that the absolute values of both terms of the numerator of $A_{i}$ are bounded by $C(b_i-a_i)^{1/q}$. Since $[a_i, b_i]$ is an interval of the partition $\xi_{n}$, by corollary \ref{normxinm} its length is not larger than $C\lambda^{\frac{n}{k}}$. Hence
$$
|V_i|=\mathcal{O}(\lambda^{n}_1),\,\,\, |A_i|=\mathcal{O}(\lambda^{n}_1),\,\,\, \mbox{\rm where} \,\,\, \lambda_1=\lambda^{\frac{1}{k}}.
$$
We rewrite $\tau_n(z_0)$ as follows
\begin{equation}\label{taun}
\tau_n(z_0)=\sum\limits_{i=0}^{q_{n}-1}N_{i}-\sum\limits_{i=0}^{q_{n}-
1} \log\left(\frac{1+A_{i}z_{i}}{1+A_{i}(z_{i}-1)}\right)=-\log m_n-
\sum\limits_{i=0}^{q_{n}-1}A_i-\sum\limits_{i=0}^{q_{n}-1}O(A_{i}^{2}).
\end{equation}
One can then estimate the last sum in (\ref{taun}) as follows
$$
\left|\sum\limits_{i=0}^{q_{n}-1}A_{i}^{2}\right|\leq
2\sum\limits_{i=0}^{q_{n}-1}
\frac{1}{(1+V_i)^2}\left\{\left(\int\limits_{a_i}^{x_i}\frac{f''(t)(t-a_i)}{f'(a_i)(x_i-a_i)}dt\right)^2+
\left(\int\limits_{x_i}^{b_i}\frac{f''(t)(b_i-t)}{f'(a_i)(b_i-x_i)}dt\right)^2\right\}\leq
$$
$$
\leq C\|f''\|_{p}\sum\limits_{i=0}^{q_{n}-1}\left\{(x_i-a_i)\int\limits_{a_i}^{x_i}|f''(t)|dt+
(b_i-x_i)\int\limits_{x_i}^{b_i}|f''(t)|dt\right\}\leq
$$
\begin{equation}\label{sumAi2}
\leq C\cdot \max\limits_{0\leq i\leq
q_n}|b_i-a_i|\sum\limits_{i=0}^{q_{n}-1}\int\limits_{a_i}^{b_i}|f''(t)|dt\leq
C\lambda^{n}_1.
\end{equation}
To estimate the sum $\sum\limits_{i=0}^{q_{n}-1}A_{i}$ we rewrite it in the following form:
$$
\sum\limits_{i=0}^{q_{n}-1}A_i=-\sum\limits_{i=0}^{q_{n}-1}
\frac{1}{1+V_i}\left\{\int\limits_{a_i}^{x_i}\frac{f''(t)}{f'(a_i)}\frac{t-a_i}{x_i-a_i}dt
+\int\limits_{x_i}^{b_i}\frac{f''(t)}{f'(a_i)}\frac{b_i-t}{b_i-x_i}dt\right\}=
$$
$$
=-\sum\limits_{i=0}^{q_{n}-1}\int\limits_{a_i}^{b_i}
\frac{f^{\prime
\prime}(t)}{2f^{\prime}(t)}dy-\sum\limits_{i=0}^{q_{n}-1}\left[\frac{1}{f^{\prime}(a_i)(x_i-a_i)}
\int\limits_{a_i}^{x_i}f^{\prime\prime}(t)(t-a_i)dt-
\int\limits_{a_i}^{x_i}\frac{f^{\prime\prime}(t)}{2f^{\prime}(t)}dt\right]-
$$
\begin{equation}\label{sumai}
-\sum\limits_{i=0}^{q_{n}-1}\left[\frac{1}{f^{\prime}(a_i)(b_i-x_i)}
\int\limits_{x_i}^{b_i}f^{\prime \prime}(t)(b_i-t)dt-
\int\limits_{x_i}^{b_i}\frac{f^{\prime\prime}(t)}{2f^{\prime}(t)}dt\right]+
\end{equation}
$$
+\sum\limits_{i=0}^{q_{n}-1}\frac{V_i}{1+V_i}\left[\frac{1}{f^{\prime}
(a_i)(x_i-a_i)}\int\limits_{a_i}^{x_i}f^{\prime\prime}(t)(t-a_i)dt+
\frac{1}{f^{\prime}(a_i)(b_i-x_i)}\int\limits_{x_i}^{b_i}
f^{\prime\prime}(t)(b_i-t)dt\right].
$$
The first sum after the second equality sign gives $(-\log m_n).$ Since $|V_i|=\mathcal{O}(\lambda^{n}_1)$, the absolute value of the last sum in (\ref{sumai}) is bounded above by $C\|f^{\prime\prime}\|\lambda^{n}_1$.
Denote by $S_n$ respectively $\overline{S}_n$, the second and third sums after the last equality sign in (\ref{sumai}). Then we obtain
$$
\sum\limits_{i=0}^{q_{n}-1}A_i=-\log m_n-S_n-\overline{S}_n+O(\lambda^{n}_1).
$$
We rewrite the sum $S_{n}$ in the following form:
$$
S_n=
\sum\limits_{i=0}^{q_{n}-1}\left[\int\limits_{a_i}^{x_i}\frac{f^{\prime
\prime}(t)}{f^{\prime}(t)}\frac{t-a_i}{x_i-a_i}dt-\int\limits_{a_i}^{x_i}
\frac{f^{\prime \prime}(t)}{2f^{\prime}(t)}dt\right]+
$$
$$
+\sum\limits_{i=0}^{q_{n}-1}
\int\limits_{a_i}^{x_i}\frac{f^{\prime\prime}(t)(t-a_i)dt}{f^{\prime}(t)
f^{\prime}(a_i)(x_i-a_i)}
\int\limits_{a_i}^{t}f^{\prime\prime}(s)ds\equiv
S^{(1)}_{n}+S^{(2)}_{n}.
$$
Using Holder's inequlity for the integral we obtain:
$$
|S^{(2)}_{n}|\leq C\cdot
\sum\limits_{i=0}^{q_{n}-1}\left(\int\limits_{a_i}^{b_i}|f''(t)|dt\right)^{2}\leq
C \|f^{\prime\prime}\|_{p}\cdot\max\limits_{0\leq i \leq q_n}|b_i-a_i|^{1/q}
\sum\limits_{i=0}^{q_{n}-1}\int\limits_{a_i}^{b_i}|f''(t)|dt\leq
C \lambda^{n}_2.
$$
where $\lambda_2=\lambda^{\frac{1}{k\cdot q}}$. This together with Proposition \ref{modSn1} imply that
$|S_{n}|\leq \delta_{n}$. Analogously one can show, that $|\overline{S}_{n}|\leq \delta_{n}$. So we get the first estimate in (\ref{modtaun}).

As seen from their definitions in (\ref{Ai}) and (\ref{psitau}), the functions $A_{i}$ and $\psi_{i}$ depend on the variable $x_{i}$, which is linear in the variable $z_{i}$. Therefore $A_{i}$, $\psi_{i}$ themselves depend on $z_{i}$. Calculating the derivatives of $\psi_{i}$ and $A_{i}$ we get
\begin{equation}\label{dphi}
\frac{d\psi_i}{dz_i}=\frac{A^2_i-A'_i}{(1+A_iz_i)(1+A_i(z_i-1))}.
\end{equation}
\begin{equation}\label{dai1}
A'_i=\frac{dA_i}{dz_i}=(b_i-a_i)\frac{dA_i}{dx_i},
\end{equation}
where
\begin{equation}\label{dai}
\frac{dA_i}{dx_i}=\frac{\frac{1}{f'(a_i)(x_i-a_i)^2}
\int\limits_{a_i}^{x_i}f''(t)(t-a_i)dt- \frac{1}{f'(a_i)(b_i-x_i)^2}
\int\limits_{x_i}^{b_i}f''(t)(b_i-t)dt}{1+\frac{1}{f'(a_i)(b_i-a_i)}
\int\limits_{a_i}^{b_i}f''(t)(b_i-t)dt}.
\end{equation}
Consider now
$$
\left|(z_0-z^{2}_0)\frac
{d\tau_{n}(z_0)}{dz_0}\right|=\left|(z_0-z^{2}_0)\sum\limits_{i=0}^{q_{n}-1}
\frac{d\psi_i}{dz_i}\cdot\frac{dz_i}{dz_0}\right|
$$
Since $|A_i|=\mathcal{O}(\lambda^n_1)$, the denominator of the right hand side in (\ref{dphi}) is bounded. Relation (\ref{sumAi2}) implies, that the sum corresponding $A_i^2$ is not greater than $C\lambda^n_1$. As in rewriting $S_n$, we change the  $f''$ (in the integrals in the numerator of $A'_i$ in (\ref{dai})) to $\frac{f''}{f'}$ in the sum $E_n$ (see Remark \ref{modEn}). Then relations (\ref{dai1})-(\ref{dai}), and Lemma \ref{propzi} imply that
$$
\left|(z_0-z^{2}_0)\frac {d\tau_{n}(z_0)}{dz_0}\right|\leq C \lambda^{n}_{1}+ C\sum\limits_{i=0}^{q_{n}-1}(z_i-z^{2}_i)\left|\frac
{dA_{i}}{dz_i}\right|\leq C\lambda^{n}_{2}+E_{n}.
$$
This together with Remark \ref{modEn} imply the second relation in (\ref{modtaun}).

It is clear that
$$
\int\limits_{0}^{1}|\tau'_n(z_0)|dz_0=\int\limits_{0}^{1}\left|\sum\limits_{i=0}^{q_n-1}\frac{d\psi_i}{dz_i}
\frac{dz_i}{dz_0}\right|dz_0\leq
C\lambda^{n}_{1}+\int\limits_{0}^{1}\left|\sum\limits_{i=0}^{q_n-1}\frac{dA_i}{dz_i}\right|dz_{0}.
$$
As when rewriting $S_n$ we change the $f''$  under the integrals in the numerator of $A'_i$ to $\frac{f''}{f'}$ in the last sum. Then using second relation in (\ref{propzi1}), together with (\ref{dai1})-(\ref{dai}) and substituting $z_i=\dfrac{x_i-a_i}{b_i-a_i}$, we get \, $\int\limits_{0}^{1}|\tau'_n(z_0)|dz_0=\mathcal{O}(Q_{n}+\lambda^{n}_1).$ The latter equality and Proposition \ref{modQn} imply the first inequality in (\ref{modtaun1}).

Differentiating (\ref{dphi}), (\ref{dai1}), (\ref{dai}) we obtain:
\begin{equation}\label{d2phi}
\frac{d^2\psi_i}{dz^{2}_i}=\frac{2A_iA^{\prime}_i-A^{\prime\prime}_i}{(1+A_iz_i)(1+A_i(z_i-1))}-
\frac{2(A'_iz_i+A_i)}{1+A_iz_i}\cdot\frac{d\psi_i}{dz_i}-\left(\frac{d\psi_i}{dz_i}\right)^{2}.
\end{equation}
\begin{equation}\label{ai2}
A''_i=\frac{d^2A_i}{dz^2_i}=(b_i-a_i)^2\frac{d^2A_i}{dx^2_i}.
\end{equation}
where
\begin{equation}\label{d2ai}
\frac{d^2A_i}{dx^2_i}=\frac{\int\limits_{a_i}^{x_i}\frac{2(f''(x_i)-f''(t))(t-a_i)}{(x_i-a_i)^2}dt+
\int\limits_{x_i}^{b_i}\frac{2(f''(x_i)-f''(t))(b_i-t)dt}{(b_i-x_i)^2}}{f'(\alpha_i)+
\frac{1}{b_i-a_i}\int\limits_{a_i}^{b_i}f''(t)(t-a_i)dt}.
\end{equation}
We have
$$
\tau''_n(z_0)=\sum\limits_{i=0}^{q_n-1}\left(\frac{d^2\psi_i}{dz^2_i}\cdot
\left(\frac{dz_i}{dz_0}\right)^2+\frac{d\psi_i}{dz_i}\cdot
\frac{dz^2_i}{dz_0^2}\right)
$$
The first relation in (\ref{modtaun1}), relation (\ref{d2phi}) and Lemma \ref{propzi} imply that
$$
\int\limits_{0}^{1}|(z_0-z^2_0)\tau''_n(z_1)|dz_0\leq C \int\limits_{0}^{1}\left|(z_i-z^2_i)
\sum\limits_{i=1}^{q_n-1}(b_i-a_i)^2\frac{d^2A_i}{dx_i^2}\right|dz_{0}+\delta_n\leq
$$
$$
\leq
C\int\limits_{0}^{1}\left|(z_i-z^2_i)\sum\limits_{i=0}^{q_n-1}\left(\frac{b_i-a_i}{x_i-a_i}
\right)^2\int\limits_{a_i}^{x_i}[f''(x_i)-f''(t)]\frac{t-a_i}{x_i-a_i}dt\right|dz_0+
$$
$$
+C\int\limits_{0}^{1}\left|(z_i-z^2_i)\sum\limits_{i=0}^{q_n-1}
\left(\frac{b_i-a_i}{x_i-a_i}\right)^2\int\limits_{x_i}^{b_i}[f''(x_i)-f''(t)]\frac{b_i-t}{b_i-x_i}dt\right|dz_0
+\delta_n
$$
Hence, by Lemma \ref{propzi} and substituting $z_{i}=\dfrac{x_i-a_i}{b_i-a_i}$ in the last integral, we get
$$
\int\limits_{0}^{1}|(z_0-z^2_0)\tau''_n(z_1)|dz_0=\mathcal{O}(U_{n}+\delta_n).
$$
This and Remark \ref{modUn} imply the second relation in (\ref{modtaun1}).
\end{proof}

\section{Proofs of main Theorems}

Before giving the proof of the main results, we approximate relative coordinates $z_{q_{n}}$ by M\"{o}bius functions. Consider an arbitrary fundamental segment $I^{(n)}_{\alpha}=[a; b]$ of the $n$-th basic partition $\mathcal{P}^{(n)}$. Recall, that we have introduced the relative coordinates  $z_i: [f^i(a), f^i(b)]\rightarrow [0, 1]$ by the formula
$$
z_{i}=\frac{f^{i}(x)-f^{i}(a)}{f^{i}(b)-f^{i}(a)},\,\,\,\,\, x\in
[a, b],\,\,\,\,\, 0\leq i\leq q_{n}.
$$
The following lemma shows that $z_{q_{n}}$ is approximated by linear-fractional functions of $z_{0}$, for large $n$.

\begin{lemm}\label{zqnFn} Suppose that $f\in \mathbb{B}^{KO}$. Then the following approximations holds
\begin{equation}\label{apprzqn}
\|z_{q_{n}}-F_{n}\|_{C^{1}([0, 1])}\leq \delta_{n},\,\,\,\, \|z''_{q_{n}}-F''_{n}\|_{L_{1}([0, 1], d\ell)}\leq \delta_{n},
\end{equation}
where $F_n$ is defined in (\ref{Fn}) and $\{\delta_n\}\in l_2$.
\end{lemm}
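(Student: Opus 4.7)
The plan is to derive a closed form for $z_{q_n}$ by iterating the one-step relation between $z_{i+1}$ and $z_i$, and then match it against the Möbius transformation $F_n$ using the $\tau_n$-estimates from Proposition~\ref{estimtaun}. The quantities $A_i$ and $N_i$ are precisely the building blocks that make this matching clean.

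First I would derive the one-step recursion. Writing $f(x_i)-f(a_i)=\int_{a_i}^{x_i}f'$ and integrating by parts, the mean values $\bar{f'}_{[a_i,x_i]}$ and $\bar{f'}_{[x_i,b_i]}$ appear naturally and can be rewritten in terms of $A_i$ from (\ref{Ai}); the ratio $z_{i+1}/(1-z_{i+1})$ in terms of $z_i/(1-z_i)$ is therefore a fractional linear expression whose coefficients are controlled by $A_i$ and $N_i$. A direct calculation then shows
$$\log\frac{z_{i+1}}{1-z_{i+1}}-\log\frac{z_i}{1-z_i} = -\psi_i(z_0)+r_i(z_0),$$
where $r_i(z_0)=\mathcal O(A_i^2)$ is legitimate because $|A_i|=\mathcal O(\lambda_1^{n})$ by Corollary~\ref{normxinm}, so the relevant logarithms can be Taylor-expanded in $A_i$. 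Summing over $i$ from $0$ to $q_n-1$, and using that $\log[F_n(z_0)/(1-F_n(z_0))]-\log[z_0/(1-z_0)]=\log m_n=-\sum_i N_i$ absorbs the $N_i$ contributions, I would obtain
$$\log\frac{z_{q_n}}{1-z_{q_n}}-\log\frac{F_n(z_0)}{1-F_n(z_0)} = -\tau_n(z_0)+\sum_{i=0}^{q_n-1}r_i(z_0),$$
with $|\sum r_i|\leq C\sum A_i^2\leq C\lambda^n$ by (\ref{sumAi2}). Since $z_{q_n}$ and $F_n$ both fix the endpoints $0,1$, combining this identity with the first bound of Proposition~\ref{estimtaun} yields $\|z_{q_n}-F_n\|_{C^0}\leq C\delta_n$.

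For the $C^1$ bound I would differentiate the closed formula in $z_0$, use Lemma~\ref{propzi} to control $dz_i/dz_0$, and invoke the weighted pointwise estimate $|(z_0-z_0^2)\tau_n'(z_0)|\leq\delta_n$ from (\ref{modtaun}). The weight $z_0(1-z_0)$ is precisely what arises when differentiating $\log[z/(1-z)]$, and it cancels against the Möbius factors produced on the left-hand side, giving the uniform bound on $z_{q_n}'-F_n'$. For the $L_1$ bound on the second derivative, differentiating once more produces three kinds of contributions: (i) a sum involving $\psi_i'(z_i)(dz_i/dz_0)^2$, controlled by the weighted estimate $\int_0^1(z_0-z_0^2)|\tau_n''|dz_0\leq\delta_n$ from (\ref{modtaun1}) together with the second bound in Lemma~\ref{propzi}; (ii) a sum involving $\psi_i'(z_i)\,d^2z_i/dz_0^2$, bounded using $\int_0^1|\tau_n'|dz_0\leq\delta_n$ from (\ref{modtaun1}) and the integral bound $\int_0^1|d^2z_i/dz_0^2|\,dz_0\leq C$ from Lemma~\ref{propzi}; and (iii) cross-terms coming from squaring the first-derivative expression, controlled again by the pointwise weighted estimate in (\ref{modtaun}).

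The main obstacle is the $L_1$ estimate on the second derivative. Both $F_n''$ and $z_{q_n}''$ can be large near the endpoints $z_0=0,1$ since $F_n'$ behaves like $m_n/[1+z(m_n-1)]^2$, and the weight $(z_0-z_0^2)$ appearing throughout Proposition~\ref{estimtaun} is exactly what is needed to absorb this singularity. One must track these weights carefully through the chain rule and check that the $A_i^2$-type error terms produced by the Taylor expansion of the logarithms contribute to $l_2$ rather than merely being pointwise exponentially small; this is where the sharp summability from (\ref{sumAi2}) and Proposition~\ref{etan} is used.
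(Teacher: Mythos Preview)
Your approach is essentially the paper's: iterate the one-step relation to a closed formula in terms of $\tau_n$, then invoke Proposition~\ref{estimtaun}. One simplification worth noting: the identity $\frac{1-z_{i+1}}{z_{i+1}}=\frac{1-z_i}{z_i}\cdot\frac{1+A_iz_i}{1+A_i(z_i-1)}$ is \emph{exact}, so by the very definition of $\psi_i$ one gets the exact closed form $z_{q_n}=z_0m_ne^{\tau_n}/[1+z_0(m_ne^{\tau_n}-1)]$ with no remainder $r_i$ and with sign $+\tau_n$; the paper then differentiates this formula explicitly in $z_0$ and reads off both derivative estimates directly from Proposition~\ref{estimtaun}, so there is no need to re-decompose $\tau_n',\tau_n''$ into sums over $\psi_i$ or to appeal separately to Lemma~\ref{propzi}.
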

\begin{proof} In the following we use the following notations:
$$
a_{i}=f^{i}(a),\,\, b_{i}=f^{i}(b),\,\,\, x_{i}=f^{i}(x)\in[a_i,
b_i].
$$
The points $f^i(x)\in I^{(n)}_{\alpha, i}$ are mapped by $f$ to the points $f^{i+1}(x)\in I^{(n)}_{\alpha,
i+1}$, with relative coordinates $z_{i+1}$. Then one has for the relative coordinates $z_i$ and $z_{i+1}$ of the
points $f^i(x)$ respectively $f^{i+1}(x)$ in the interval $[a_i, b_i]$ respectively $[a_{i+1}, b_{i+1}]$:
$$
z_{i}=\frac{x_{i}-a_{i}}{b_{i}-a_{i}},\,\,\,
z_{i+1}=\frac{x_{i+1}-a_{i+1}}{b_{i+1}-a_{i+1}}.
$$
It is clear, that
$$
a_{i+1}=f(a_{i}),\,\,\,\,\, x_{i+1}=f(x_{i})=f(a_{i})+f'(a_{i})(x_{i}-a_{i})+\int\limits_{a_{i}}^{x_{i}}f''(t)(x_{i}-t)dt,
$$
$$
b_{i+1}=f(b_{i})=f(a_{i})+f'(a_{i})(b_{i}-a_{i})+\int\limits_{a_{i}}^{b_{i}}f''(t)(b_{i}-t)dt.
$$
Using this, we rewrite  $z_{i+1}$ as follows
$$
z_{i+1}=\frac{f'(a_{i})(x_{i}-a_{i})+\int\limits_{a_{i}}^{x_{i}}f''(t)(x_i-t)dt}
{f'(a_{i})(b_{i}-a_{i})+\int\limits_{a_{i}}^{b_{i}}f''(t)(b_{i}-t)dt}=
$$
$$
=\frac{x_{i}-a_{i}}{b_{i}-a_{i}}\left(1+\frac{(b_{i}-a_{i})\int\limits_{a_{i}}^{x_{i}}f''(t)(x_{i}-t)dt-
(x_{i}-a_{i})\int\limits_{a_i}^{b_i}f''(t)(b_i-t)dt}{f'(a_i)(b_{i}-a_{i})(x_{i}-a_{i})+
(x_{i}-a_{i})\int\limits_{a_i}^{b_i}f''(t)(b_i-t)dt}\right)=
$$
$$
=z_i(1+A_i(z_i-1)),
$$
where $A_i$ was defined in (\ref{Ai}). It follows that
$$
\frac{1-z_{i+1}}{z_{i+1}}=\frac{1-z_{i}}{z_{i}}\cdot
\frac{1+A_iz_i}{1+A_i(z_i-1)}=\frac{1-z_{i}}{z_{i}}\exp\{N_{i}\}\exp\{-\psi_i\}.
$$
Iterating this equation we obtain
$$
\frac{1-z_{q_n}}{z_{q_n}}=\frac{1-z_{0}}{z_{0}}\exp\left\{\sum\limits_{i=0}^{q_n-1}N_{i}\right\}
\exp\left\{-\sum\limits_{i=0}^{q_n-1}\psi_i\right\}=
\frac{1-z_{0}}{z_{0}}\cdot\frac{1}{m_n\exp(\tau^{(q_n)}(z_0))}.
$$
Solving for $z_{q_n}$ we get
$$
z_{q_{n}}=\frac{z_0m_n e^{\tau_n(z_0)}}{1+z_0(m_n e^{\tau_n(z_0)}-1)}.
$$
A not too hard calculation show, that
$$
z'_{q_{n}}(z_0)=\frac{m_n\exp\{\tau_n(z_0)\}(1+z_0(1-z_0)\tau'_n(z_0))}
{(1+z_0(m_n\exp\{\tau_n(z_0)\}-1))^2},\,\,\, F'_n(z_0)=\frac{m_n}{(1+z_0(m_n-1))^2}.
$$
Then, using the estimates for $\tau_n(z_0)$ in Proposition \ref{estimtaun}, we get the first relation in (\ref{apprzqn}). Similarly,
$$
z''_{q_{n}}(z_0)=\frac{m_n\exp\{\tau_n(z_0)\}(z_0-z^2_0)\tau''_n(z_0)}
{(1+z_0(m_n\exp\{\tau_n(z_0)\}-1))^2}+
$$
$$
+\frac{2m_n\exp\{\tau_n(z_0)\}\left(1-z^2_0-(2z_0-z^2_0)m_n\exp\{\tau_n(z_0)\}\right)\tau'_n(z_0)}
{\left(1+z_0(m_n\exp\{\tau_n(z_0)\}-1)\right)^3}+
$$
$$
+\frac{(1-2z_0m_n\exp\{\tau_n(z_0)\})(z_0-z^2_0)(\tau'_n(z_0))^2}
{(1+z_0(m_n\exp\{\tau_n(z_0)\}-1))^3}-\frac{2m_n\exp\{\tau_n(z_0)\}(m_n\exp\{\tau_n(z_0)\}-1)}
{(1+z_0(m_n\exp\{\tau_n(z_0)\}-1))^3},
$$
$$
F''_n(z_0)=\frac{-2m_n(m_n-1)}{(1+z_0(m_n-1))^3}.
$$
It is clear that the expression $1+z_0(m_n\exp\{\tau_n(z_0)\}-1)$ is bounded and
$$
\int\limits_{0}^{1}|(z_0-z^2_0)(\tau'_n(z_0))^2|dz_0\leq
C\int\limits_{0}^{1}|\tau'_n(z_0)|dz_0.
$$
Then, using  Proposition \ref{estimtaun} and the expression for $z''_{q_{n}}$, we get the result.
\end{proof}

\textbf{Proof of Theorem \ref{main1}.} By definitions of Zoom and the relative coordinates we have
$$
z_{q_n}(z_0)=Z_{I^{(n)}_{\alpha}}(R^{n}(f))(z_0),
$$
where $x=a+z_0(b-a),\,\,\, z_0\in[0, 1]$. Then due to Lemma \ref{zqnFn},  we get Theorem \ref{main1}.

\textbf{Proof of Theorem \ref{main2}.} In \cite{CS2013} an ergodic theorem for the random process, corresponding to a symbolic representation for the elements of partition $\xi_n$, has been proven. Note, that this theorem is also true in our (KO smoothness) case. It follows, that for any $\alpha,\, \beta\in \mathcal{A}$
\begin{equation}\label{ergodic}
\left|\frac{\sum_{f^{i}(I^{n}_{\alpha})\subset
f^{j}(I^{r}_{\beta})}|f^{i}(I^{n}_{\alpha})|}{|f^{j}(I^{r}_{\beta})|}-
\frac{\sum_{i=1}^{q^n_{\alpha}}|f^{i}(I^{n}_{\alpha})|}{|I|}\right|\leq C \lambda^{\sqrt{n}}.
\end{equation}
For simplicity of notion we use $f_n$ to denote $R^{n}(f)$. It is clear, that
\begin{equation}\label{logmn}
-\log \sqrt{m_{n}}=\int\limits_{I^{n}_{\alpha}}\frac{D^{2}f_n(t)}{Df_n(t)}dt.
\end{equation}
Set $r:=[\frac{n}{2}]$. We rewrite the last integral as follows:
$$
\int\limits_{I^{n}_{\alpha}}\frac{D^{2}f_n(t)}{Df_n(t)}dt=\sum\limits_{\beta\in
\mathcal{A}}\sum\limits_{j=1}^{q^{n}_{\beta}}\sum\limits_{f^{i}(I^{n}_{\alpha})\subset
f^{j}(I^{r}_{\beta})}\int\limits_{f^{i}(I^{n}_{\alpha})}\frac{D^{2}f(t)}{Df(t)}dt
$$
Put
$$
\Lambda_n=
\left|\int\limits_{I^{n}_{\alpha}}\frac{D^{2}f_n(t)}{Df_n(t)}dt-
\frac{\sum_{i=1}^{q^n_{\alpha}}|f^{i}(I^{n}_{\alpha})|}{|I|}\int\limits_{[0,
1]}\frac{D^{2}f(t)}{Df(t)}dt\right|.
$$
Then we have
$$
\Lambda_n= \left|\sum\limits_{\beta\in
\mathcal{A}}\sum\limits_{j=1}^{q^{n}_{\beta}}\sum\limits_{f^{i}(I^{n}_{\alpha})\subset
f^{j}(I^{r}_{\beta})}\int\limits_{f^{i}(I^{n}_{\alpha})}\frac{D^{2}f(t)}{Df(t)}dt-
\frac{\sum_{i=1}^{q^n_{\alpha}}|f^{i}(I^{n}_{\alpha})|}{|I|}\sum\limits_{\beta\in
\mathcal{A}}\sum\limits_{j=1}^{q^{n}_{\beta}}\int\limits_{f^{j}(I^{r}_{\beta})}\frac{D^{2}f(t)}{Df(t)}dt\right|\leq
$$
$$
\leq \sum\limits_{\beta\in
\mathcal{A}}\sum\limits_{j=1}^{q^{n}_{\beta}}\left|\sum\limits_{f^{i}(I^{n}_{\alpha})\subset
f^{j}(I^{r}_{\beta})}\int\limits_{f^{i}(I^{n}_{\alpha})}\frac{D^{2}f(y)}{Df(y)}dy-
\frac{\sum_{f^{i}(I^{n}_{\alpha})\subset
f^{j}(I^{r}_{\beta})}|f^{i}(I^{n}_{\alpha})|}{|f^{j}(I^{r}_{\beta})|}
\int\limits_{f^{j}(I^{r}_{\beta})}\frac{D^{2}f(t)}{Df(t)}dt\right|+
$$
$$
+\left|\sum\limits_{\beta\in
\mathcal{A}}\sum\limits_{j=1}^{q^{n}_{\beta}}\left(\frac{\sum_{f^{i}(I^{n}_{\alpha})\subset
f^{j}(I^{r}_{\beta})}|f^{i}(I^{n}_{\alpha})|}{|f^{j}(I^{r}_{\beta})|}-
\frac{\sum_{i=1}^{q^n_{\alpha}}|f^{i}(I^{n}_{\alpha})|}{|I|}\right)
\int\limits_{f^{j}(I^{r}_{\beta})}\frac{D^{2}f(t)}{Df(t)}dt\right|=\Lambda^{(1)}_n+\Lambda^{(2)}_n.
$$
Due to the relation (\ref{ergodic}) we obtain: $\Lambda^{(2)}_n=\mathcal{O}(\lambda^{\sqrt{n}})$. We estimate next the sum
$\Lambda^{(1)}_n$. Denote the endpoints of intervals $f^{i}(I^{n}_{\alpha})$, $f^{j}(I^{r}_{\beta})$ and the ratio of its lengths by
$$
f^{i}(I^{n}_{\alpha})=[a_i,\, b_i],\,\,\,\,
f^{j}(I^{r}_{\beta})=[c_j,\, d_j],\,\,\,\,\rho_{i,
j}=\frac{b_i-a_i}{d_j-c_j},\,\,\,\, 0\leq \rho_{i, j}\leq 1.
$$
We change the variable $y\in[a_i,\, b_i]$ over the first integral in $\Lambda^{(1)}_n$ to $t\in[c_j,\, d_j]$ by the formula: $y=a_i+\rho_{i, j}(t-c_j)$. Then we have
$$
\Lambda^{(1)}_n=\sum\limits_{\beta\in
\mathcal{A}}\sum\limits_{j=1}^{q^{n}_{\beta}}\frac{\sum_{f^{i}(I^{n}_{\alpha})\subset
f^{j}(I^{r}_{\beta})}|f^{i}(I^{n}_{\alpha})|}{|f^{j}(I^{r}_{\beta})|}\left|\int\limits_{f^{j}(I^{r}_{\beta})}
\left(\frac{D^{2}f(a_i+\rho_{i,
j}(t-c_j))}{Df(a_i+\rho_{i, j}(t-c_j))}-
\frac{D^{2}f(t)}{Df(t)}\right)dt\right|
$$
We use the first assertion of Theorem \ref{martin3} to get
$$
\frac{D^{2}f}{Df}-\Phi_0=\sum\limits_{m=1}^{\infty}h_m\;\;
(\mbox{\rm in}\;L_{1}- \mbox{\rm norm}).
$$
By definition, the function $h_{m}(t)$ takes constant values on the atoms of the dynamical partition $\xi_{m}$. On the other hand, $[a_i,\, b_i]\subset [c_i,\ d_i]$. This together with \, $[a_i,\ b_i]\in \xi_{n},\,\,\, [c_j,\ d_j]\in \xi_{r},\,\,\, r=[\frac{n}{2}]<n$ \ imply, that \ $h_{m}(a_i+\rho_{i, j}(t-c_j))=h_{m}(t),\,\, t\in[c_j,\, d_j]$. Next subtracting and adding the sum $\Phi_0+\sum\limits_{m=1}^{N}h_m(t)$ in the integrand in $\Lambda^{(1)}_n$, we obtain:
$$
\Lambda^{(1)}_n\leq 2\sum\limits_{\beta\in
\mathcal{A}}\sum\limits_{j=1}^{q^{n}_{\beta}}\int\limits_{f^{j}(I^{r}_{\beta})}\left|
\frac{D^{2}f(t)}{Df(t)}-\Phi_0-\sum\limits_{m=1}^{N}h_m(t)\right|dt\leq
C\left\|\frac{D^{2}f}{Df}-\Phi_0-\sum\limits_{m=1}^{N}h_m\right\|_{1}
$$
Since
$$
\lim\limits_{N\rightarrow\infty}\left\|\frac{D^{2}f}{Df}-\Phi_0-\sum\limits_{m=1}^{N}h_{m}\right\|_{1}=0,
$$
we can choose a sufficiently large number $N$ such that
$$\left\|\frac{D^{2}f}{Df}-\Phi_0-\sum\limits_{m=1}^{N}h_{m}\right\|_{1}\leq
\lambda^{n}_2.
$$
Consequently, due to relation (\ref{logmn}), we obtain: $|\log m_{n}|=\mathcal{O}(\lambda^{\sqrt{n}})$.

Next define the map $\mathcal{M}_{a}: [0, 1]\mapsto [0, 1]$ as
$$
\mathcal{M}_{a}(x)=\frac{xe^{-\frac{a}{2}}}{1+x(e^{-\frac{a}{2}}-1)},
$$
One can show that the inequality $\|\mathcal{M}_{a}-\mathcal{M}_{b}\|_{C^{2}}\leq C|a-b|$ is fulfilled for every $a, b\in \mathbb{R}$ with $|a|, |b|\leq C$. Using this inequality for $F_{n}$ defined in (\ref{Fn}), we obtain
$$
\|F_n-Id\|_{C^{2}}\leq C|\log m_{n}-0|\leq C\lambda^{\sqrt{n}}.
$$
The last inequality and Theorem \ref{main1} imply the assertions of Theorem \ref{main2}.

\textbf{Afterthought}. At the end of this work, we would like to comment on the further development of our result. It is clear, that the set of k-bounded combinatorics has measure zero. We believe that the same results hold for Roth-type combinatorics which have full measure. Roth-type combinatorics was introduced in \cite{MMY2005}. Katznelson and Ornstein proved, that diffeomorphisms with KO smoothness conditions are absolute continuously conjugated with rigid rotation for irrational rotation numbers of bounded type \cite{KO1989.2}. As mentioned in the introduction, regularity of the conjugation can be obtained by using the convergence of renormalizations of given maps (see e.g. \cite{CS2014}, \cite{KhK2013}, \cite{KhK2014}, \cite{KhK2016}, \cite{KT2013}). Recently we showed in \cite{BC2018} convergence of renormalizations of two maps $f,\, g\in \mathbb{B}^{KO}_{\star}$. Hence it is reasonable to expect absolute continuity of the conjugation between the maps $f$ and $g$.

\textbf{Acknowledgements.} We are grateful to professor Dieter Mayer for useful discussions and comments. The third author (A.D.) was partially supported as a senior associate of ICTP, Italy. The first author (A.B.) is grateful to the Federal University of Bahia for providing with the grant Projeto Capes - PNPD - Matematica UFBA-UFAL. We would like to thank the referee for his careful readings, useful comments and suggestions which helped us to improve the readability of this paper significantly.


\begin{thebibliography}{24}


\bibitem{Ar1961} V.I. Arnol'd: \emph{Small denominators: I. Mappings from the circle onto itself.} Izv. Akad. Nauk SSSR, Ser. Mat., \,\textbf{25},\,21-86\,(1961).

\bibitem{BDM2014} A. Begmatov, A. Dzhalilov and D. Mayer: \emph{Renormalizations of circle homeomorphisms with a single break point}. Disc. \& Cont. Dyn. Syst. - A, Vol. 34, N. 11, 4487- 4513, (2014).

\bibitem{BC2018} A. Begmatov, K. Cunha: \emph{On the convergence of renormalizations of piecewise smooth homeomorphisms on the circle}. https://arxiv.org/abs/1807.09159.

\bibitem{CS2013} K. Cunha, D. Smania: \emph{Renormalization for piecewise smooth homeomorphisms on the circle}, Ann. de l'Institut Henri Poincare (C) Non Lin. Anal., 30(3), 441-462, (2013).

\bibitem{CS2014} K. Cunha, D. Smania: \emph{Rigidity for piecewise smooth homeomorphisms on the circle}, Advan. in Math., 250, 193-226, (2014).

\bibitem{He1979} M. Herman: \emph{Sur la conjugaison diff\'{e}rentiable des diff\'{e}omorphismes du cercle \`{a} des
rotations}. Publ. Math. de L'Inst. des Haut. Scien., \,\textbf{49},\, 5-233,\, (1979).

\bibitem{He1981} M. Herman: \emph{Sur les diffeomorphismes du cercle de nombre de rotation de type constant}. Conf. on Harm. Anal. in Honor of A. Zygmund, Vol. II, 708-725, (1981).


\bibitem{KO1989.1} Y. Katznelson and D. Ornstein: \emph{The differentiability of the conjugation of certain diffeomorphisms of the circle}. Erg. Theo. \& Dyn. Syst., \,\textbf{9},\, 643-680. \,(1989).

\bibitem{KO1989.2} Y. Katznelson and D. Ornstein: \emph{The absolute continuity of the conjugation of certain diffeomorphisms of the circle}. Erg. Theo. \& Dyn. Syst.,\,\textbf{9},\, 681-690,\,(1989).

\bibitem{KhKhm2003} K. Khanin and D. Khmelev: \emph{Renormalizations and Rigidity Theory for Circle Homeomorphisms with Singularities of the Break Type.} Comm. Math. Phys.,\,\,\textbf{235},\,\, 69-124\, (2003).

\bibitem{KhK2013} K. Khanin and S. Koci\'{c}: \emph{Abscence of robust rigidity for circle diffeomorphisms with breaks}. Ann. de l'Inst. Henri Poincare (C) Non Lin. An., 30(3), 385-399, (2013).

\bibitem{KhK2014} K. Khanin and S. Koci\'{c}: \emph{Renormalization conjecture and rigidity theory for circle diffeomorphisms with breaks}. Geom. Funct. Anal.,\,\textbf{24}(6),\,2002-2028\,(2014).

\bibitem{KhK2016} S. Koci\'{c}: \emph{Generic rigidity for circle diffeomorphisms with breaks}, Comm. Math. Phys. 344(2), 427-445, (2016).

\bibitem{KS1987} K.M. Khanin and Ya.G. Sinai: \emph{A New Proof of M. Herman's Theorem}. Comm. Math. Phys., \textbf{112},\, 89-101, (1987).

\bibitem{KS1989} K.M. Khanin and Ya.G. Sinai: \emph{Smoothness of conjugacies of diffeomorphisms of the circle with rotations}. Russ. Math. Surv.,\, \textbf{44},\, 69-99,\, (1989), translation of Usp. Mat. Nauk,\, \textbf{44},\, 57-82,\, (1989).

\bibitem{KT2013} K. Khanin and A. Teplinsky: \emph{Renormalization horseshoe and rigidity for circle diffeomorphisms with breaks}, Comm. Math. Phys. 320, 347-377, (2013).

\bibitem{KT2009} K. Khanin and A. Teplinsky: \emph{Herman's theory revisited}, Invent. math., 178 (2), 333-344, (2009).

\bibitem{KV1991} K.M. Khanin and E.B. Vul: \emph{Circle homeomorphisms with weak discontinuities}. Advan. in Soviet Mathematics,\, \textbf{3},\, 57-98,\, (1991).

\bibitem{KhYam} K. Khanin and M. Yampolsky:  \emph{Hyperbolicity of renormalization of circle maps with a break-type singularity}. Mosc. Math. J. 15, no. 1, 107-121, 182, (2015).

\bibitem{Lan1988} O. Lanford: \emph{Renormalization group methods for critical circle mapping}. Nonlin. evol. and chaotic phenomena. NATO Adv. Sci. Inst. Ser. B: Phys., 176, 25-36 (1988).

\bibitem{MMY2005}S. Marmi, P. Moussa, J.-C. Yoccoz: \emph{The cohomological equation for Roth-type interval exchange maps}, J. Amer. Math. Soc. 18, 823-872, (2005).

\bibitem{MMY2012} S. Marmi, P. Moussa, J.-C. Yoccoz: \emph{Linearization of generalized interval exchange maps}. Ann. of Math., 176, 1583-1646, (2012).

\bibitem{Mo1966} J. Moser: \emph{A rapid convergent iteration method and non-linear differential equations}. II. Ann. Scuola Norm. Sup. Pisa, 20(3), 499-535, (1966).

\bibitem{St1988} J. Stark: \emph{Smooth coniugacy and renormalisation for diffeomorphisms of the circle}. Nonlinearity 1, 541-575,(1988).

\bibitem{Yo1984} J.-C. Yoccoz: \emph{Conjugaison diff\'{e}rentiable des diff\'{e}omorphismes du cercle dont le nombre de rotation v\'{e}rifie une condition diophantienne}. Ann. Scien. de l'Ecole Normale Superieure, 17, 333-361, (1984).



\end{thebibliography}
\end{document}